\documentclass[a4paper,centertags,oneside,12pt]{amsart}
\usepackage{mathrsfs}
\usepackage{amssymb}
\usepackage{fancyhdr}
\usepackage{charter}
\usepackage{typearea}
\usepackage{pdfsync}
\usepackage{mathrsfs}
\newcommand{\comment}[1]{}
\usepackage{hyperref}
\usepackage[width=6.4in,height=8.5in]{geometry}


\newtheorem{theorem}{Theorem}[section]
\newtheorem{definition}[theorem]{Definition}
\newtheorem{proposition}[theorem]{Proposition}

\newtheorem{lemma}[theorem]{Lemma}

\newcommand{\cali}[1]{\mathscr{#1}}
\newcommand{\Cc}{\cali{C}}

\DeclareMathOperator{\supp}{supp}
\DeclareMathOperator{\codim}{codim}
\DeclareMathOperator{\Bs}{Bs}

\newcommand{\dist}{\mathop{\mathrm{dist}}\nolimits}
\newcommand{\bfs}{{\rm \mathbf{s}}} 
 
\newcommand{\loc}{ \mathrm{loc}  } 
\newcommand{\bfm}{ \mathbf{m}  } 
\newcommand{\Ac}{\cali{A}}

\newcommand{\Jc}{\cali{J}}
\newcommand{\Oc}{\cali{O}}

\newcommand{\FS}{{\rm FS}}
\newcommand{\C}{\mathbb{C}}
\newcommand{\N}{\mathbb{N}}

\renewcommand\P{\mathbb{P}}
\newcommand{\ddc}{{dd^c}}

\begin{document}

\title[Approximation  and equidistribution results for pseudo-effective  line bundles]
{Approximation  and equidistribution   results  for  pseudo-effective line bundles}

\author{Dan Coman}
\thanks{D.\ Coman is partially supported by the NSF Grant DMS-1300157}
\address{Department of Mathematics, Syracuse University, Syracuse, NY 13244-1150, USA}\email{dcoman@syr.edu}
\author{George Marinescu}
\address{Universit{\"a}t zu K{\"o}ln, Mathematisches Institut, Weyertal 86-90, 50931 K{\"o}ln, 
Deutschland   \& Institute of Mathematics `Simion Stoilow', Romanian Academy, Bucharest, Romania}
\email{gmarines@math.uni-koeln.de}
\thanks{G.\ Marinescu is partially supported by DFG funded project SFB/TRR 191}
\author{Vi{\^e}t-Anh Nguy{\^e}n}
\address{Universit\'e de Lille 1, 
Laboratoire de math\'ematiques Paul Painlev\'e, 
CNRS U.M.R. 8524,  
59655 Villeneuve d'Ascq Cedex, 
France}
\email{Viet-Anh.Nguyen@math.univ-lille1.fr}
\thanks{V.-A. Nguyen is partially supported by Vietnam Institute for Advanced Study in Mathematics (VIASM)}
\thanks{Funded through the Institutional Strategy of the University of Cologne 
within the German Excellence Initiative}
\subjclass[2010]{Primary 32L10; Secondary 32A60, 32U40, 32W20, 53C55, 81Q50}
\keywords{Bergman kernel function, Fubini-Study current, pseudo-effective line bundle, canonical line bundle,
 singular Hermitian metric, 
zeros of random holomorphic sections}
\date{December 29, 2016}

\pagestyle{myheadings}

\begin{abstract}
We study the distribution of the common zero sets of $m$-tuples 
of holomorphic sections of powers of 
$m$ singular Hermitian pseudo-effective line bundles on a compact K\"ahler manifold.
As an application, we  obtain
sufficient conditions which ensure that the wedge product of the curvature currents of 
these line bundles
can be approximated by analytic cycles.
\end{abstract}

\maketitle
\tableofcontents

\section{Introduction} \label{introduction}

 We start with a  short discussion about  the  background  of this   work.
 The remaining  of  the   section is  divided into three subsections.
 The first one deals with  the approximation  question, whereas  the second part is 
 devoted to the  equidistribution  problem. The  organization of the  article is  discussed in
 the last  subsection.
 
\subsection{Background}

Let $(X,\omega)$ be a compact K\"ahler manifold of dimension $n,$ $\dist$ be 
the distance on $X$ induced  by $\omega,$ and  $K_X$ be the {\it canonical line bundle} of $X.$  If $(L,h)$ is a  holomorphic line bundle 
on $X$  endowed  with  a singular Hermitian metric $h,$ we denote by $c_1(L,h)$ its {\it curvature current.} Recall that if $e_L$ is a holomorphic frame of $L$ 
on some open set $U\subset X$ then $|e_L|^2_h=e^{-2\phi}$, where $\phi\in L^1_{loc}(U)$ is called 
the {\it local weight} of the metric $h$ with respect to $e_L$, and $c_1(L,h)|_U=dd^c\phi$. 
Here $d=\partial+\overline\partial$,  $d^c= \frac{1}{2\pi i}(\partial -\overline\partial)$. 
We say that $h$ is {\it positively curved} (resp.\ {\it strictly positively curved}) if $c_1(L,h)\geq 0$ 
(resp.\ $c_1(L,h)\geq   \varepsilon\omega$ for some $\varepsilon>0$)  in the sense of currents. 
This is equivalent to saying that the local weights $\phi$ are plurisubharmonic (psh for short) (resp.\  strictly plurisubharmonic).
We say that  $(L,h)$ is {\it pseudo-effective} if the metric $h$ is positively curved.  
For $p\in \N$ and  $L$ a holomorphic line bundle on $X,$ let $L^p := L^{\otimes p}.$  
Given a holomorphic section $s\in   H^0(X, L^p),$  we denote
by $[s = 0]$ the current of integration (with multiplicities) over the analytic hypersurface
$\{s= 0\}\subset  X.$

Recall that a holomorphic line bundle $L$ is called \emph{big} 
if its Kodaira-Iitaka dimension equals the dimension of $X$ (see
\cite[Definition\,2.2.5]{MM07}). By the Shiffman-Ji-Bonavero-Takayama 
criterion \cite[Lemma\,2.3.6]{MM07},
$L$ is big if and only if it admits a strictly positively curved singular Hermitian  metric. 
In this  case  we also say that $(L,h)$ is  big.
 
Recall  from  \cite[Definition 1.1]{CMN15} the  following concept.
 We say that the analytic subsets $A_1,\ldots,A_m \subset X$  are 
{\it in general position} if $\codim (A_{i_1}\cap\ldots\cap A_{i_k})\geq k$ for every $1\leq k\leq m$ and $1\leq i_1<\ldots<i_k\leq m$.

\par
Let $L_k$, $1\leq k\leq m\leq n$, be $m$   holomorphic line bundles on $(X,\omega)$. 
For each $p\in\N^*,$  we define $\Ac^p(L_1,\ldots,L_m)$  to be the space of all positive closed currents $R$ of
bidegree $(m,m)$ on $X$ of the form
\begin{equation}\label{e:A^p}
 R ={1\over p^m} [s_{p1} = 0] \wedge \ldots\wedge[s_{pm} = 0]\,,\,\;s_{pj}\in H^0(X, L^p_j)\,,
\end{equation}
where $s_{pj}$ are such that the hypersurfaces $\{ s_{p1}= 0\},\ldots,\{ s_{pm} = 0\}$ are in general position.
Recall that if $L_k$ are big 
the set $\{s_{pi_1} = 0\}\cap\ldots\cap \{s_{pi_k} = 0\}$         has pure dimension $n - k$ for every $i_1 < \ldots < i_k$ in
$\{1,\ldots,m\}$ (see e.g. \cite[Proposition 3.3]{CMN15} and its proof), so $[s_{p1} = 0] \wedge \ldots\wedge[s_{pm} = 0]$ is a well-defined positive closed current of bidegree $(m,m)$
which is  equal to the current of integration
with multiplicities along the analytic set   $\{s_{p1} = 0\}\cap\ldots\cap \{ s_{pm} = 0\}$    (see e.g. \cite[Corollary 2.11,
Proposition 2.12]{D93} and \cite[Theorem 3.5]{FS95}).

When $L_1=\cdots=L_m=L,$  $\Ac^p(L,\ldots,L)$ is related with the space  $\mathcal A_m(L^p)$  introduced  by the   first and second authors in
\cite{CM13b}  as
$$
\mathcal A_m(L^p)=\left\lbrace R={1\over N}\sum_{l=1}^N  R_l:\quad  R_l\in \Ac^p(L,\ldots,L),\ N\in\N^*\quad  \right\rbrace.
$$
In particular,   $\Ac^p(L,\ldots,L)\subset \mathcal A_m(L^p).$

For each $p\in\N^*,$  we  define $\Ac^p_K(L_1,\ldots,L_m)$  to be the space of all positive closed currents $R$ of
bidegree $(m,m)$ on $X$  of the  form
\begin{equation}\label{e:A^p_K}
 R ={1\over p^m} [s_{p1} = 0] \wedge \ldots\wedge[s_{pm} = 0]\,,\,\; s_{pj}\in H^0(X, L^p_j\otimes K_X)\,,
\end{equation}
where $s_{pj}$ are such that the hypersurfaces $\{ s_{p1}= 0\},\ldots,\{ s_{pm} = 0\}$ are in general position.

\subsection{Approximation results}

Here is  our first approximation  result using the sequence of  spaces  $\Ac^p(L_1,\ldots,L_m),$ $p\geq 1.$
\begin{theorem}\label{T:main1}
Let $(X,\omega)$ be a compact K\"ahler manifold of dimension $n$ and $1\leq m\leq n$ be  an integer. For $1\leq k\leq m$ let 
$L_k$  be a holomorphic line 
bundle on $X$ endowed   with  two  singular Hermitian metrics $g_k$ and $h_k$ such that: 
\begin{itemize}
\item[(i)] $g_k$ and $h_k$  are locally bounded outside a proper analytic subset $\Sigma_k\subset X;$ 
\item[(ii)] $c_1(L_k,g_{k})\geq\varepsilon\omega$ on $X$ for some $\varepsilon>0$  and  $c_1(L_k,h_k)\geq 0$ on $X;$ 
\item[(iii)]
 $\Sigma_1,\ldots,\Sigma_m$ 
are in general position. 
 \end{itemize}
Then  there exists  a  sequence of currents $R_j\in  \Ac^{p_j}(L_1,\ldots,L_m),$ where $p_j\nearrow\infty,$
such that $R_j$ converges  weakly on $X$  to 
 $ c_1(L_1,h_1)\wedge\ldots\wedge c_1(L_m,h_m)$  as $p\to\infty\,.$ 
\end{theorem}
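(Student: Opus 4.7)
The strategy is to reduce the statement to the case in which each $(L_k, h_k)$ is strictly positively curved (i.e.\ big), a setting essentially treated in \cite{CMN15}, via an interpolation between the given metric $h_k$ and the auxiliary strictly positive metric $g_k$, followed by a diagonal extraction.

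For each $\delta \in (0,1)$ and $1 \leq k \leq m$, I would introduce the interpolated metric $h_{k,\delta} := h_k^{1-\delta}\, g_k^{\delta}$ on $L_k$, whose local weight $(1-\delta)\psi_k + \delta \phi_k$ (with $\psi_k$, $\phi_k$ the local weights of $h_k$, $g_k$) is plurisubharmonic and satisfies
\[
c_1(L_k, h_{k,\delta}) = (1-\delta)\, c_1(L_k, h_k) + \delta\, c_1(L_k, g_k) \geq \delta\varepsilon\, \omega.
\]
Thus $(L_k, h_{k,\delta})$ is big by the Shiffman--Ji--Bonavero--Takayama criterion, and $h_{k,\delta}$ remains locally bounded outside $\Sigma_k$. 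Since the $\Sigma_k$ are in general position, Bedford--Taylor theory produces well-defined positive closed currents
\[
T_\delta := c_1(L_1, h_{1,\delta}) \wedge \cdots \wedge c_1(L_m, h_{m,\delta}), \qquad T := c_1(L_1, h_1) \wedge \cdots \wedge c_1(L_m, h_m).
\]
As $\delta \to 0$, the weights of $h_{k,\delta}$ converge locally uniformly to those of $h_k$ on $X \setminus \bigcup_k \Sigma_k$, so by the continuity of Monge--Amp\`ere operators for bounded weights, $T_\delta \to T$ weakly off $\bigcup_k \Sigma_k$. Since $T_\delta$ and $T$ represent the same cohomology class $c_1(L_1) \smile \cdots \smile c_1(L_m)$, their total masses coincide, and this upgrades the convergence to weak convergence on all of $X$.

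For each fixed $\delta > 0$, the data $(L_k, h_{k,\delta}, \Sigma_k)_k$ satisfy exactly the strictly positive, general-position hypotheses under which the approximation results of \cite{CMN15} yield sections $s^{(\delta)}_{p,k} \in H^0(X, L_k^p)$ (for $p$ large) whose zero divisors are in general position and such that
\[
R^{(\delta)}_p := \frac{1}{p^m}\, [s^{(\delta)}_{p,1}=0] \wedge \cdots \wedge [s^{(\delta)}_{p,m}=0] \in \Ac^p(L_1,\ldots,L_m)
\]
converges weakly to $T_\delta$ as $p \to \infty$. Because all such currents have uniformly bounded mass (determined by the intersection number of the $c_1(L_k)$), they lie in a compact and metrizable subset of the space of currents, and a standard diagonal procedure produces sequences $\delta_j \to 0$ and $p_j \nearrow \infty$ such that $R_j := R^{(\delta_j)}_{p_j} \to T$ weakly, as desired.

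The main obstacle I anticipate is the global convergence $T_\delta \to T$ on $X$: off the singular set $\bigcup_k \Sigma_k$ this is routine Bedford--Taylor continuity, but ruling out loss or creation of mass along the singularities requires either the cohomological argument sketched above or a localization exploiting the codimension bounds implicit in the general position hypothesis. A secondary technical point is to verify that the approximation scheme invoked from \cite{CMN15} applies to each fixed $h_{k,\delta}$ in the exact form needed here, but this should be a matter of quotation since the hypotheses match.
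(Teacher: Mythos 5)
Your overall strategy --- interpolating $h_{k,\delta}=h_k^{1-\delta}g_k^{\delta}$ to gain strict positivity, invoking the equidistribution/approximation machinery for big line bundles with metrics bounded outside analytic sets in general position, proving $T_\delta\to T$, and finishing with a diagonal extraction over a metrizable family of currents of bounded mass --- is exactly the paper's route (Proposition~\ref{P:equi_approx} combined with Theorem~\ref{T:main5}). However, your justification of the key convergence $T_\delta\to T$ on all of $X$ has a genuine gap. Weak convergence on $X\setminus\bigcup_k\Sigma_k$ (Bedford--Taylor continuity for locally bounded weights) together with equality of total masses does not force a weak limit point $T'$ of $T_\delta$ to equal $T$: the difference $T'-T$ is only a \emph{signed} closed current supported on $A:=\bigcup_k\Sigma_k$, and since general position only guarantees $\codim\Sigma_k\geq 1$, the set $A$ may have dimension strictly larger than $n-m$ (already for $m=2$ with hypersurfaces $\Sigma_k$), so neither the support theorem nor positivity applies, and mass can a priori redistribute inside $A$ (two positive closed currents of equal mass agreeing off $A$ need not coincide). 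The paper closes precisely this point by the exact multilinear expansion you already have in hand: since $c_1(L_k,h_{k,\delta})=(1-\delta)\,c_1(L_k,h_k)+\delta\, c_1(L_k,g_k)$ and, by general position and Demailly's intersection theory, every mixed product $\bigwedge_{k\in J}c_1(L_k,h_k)\wedge\bigwedge_{k\notin J}c_1(L_k,g_k)$ is a well-defined positive closed current, one gets $T_\delta=(1-\delta)^m\,T$ plus finitely many positive closed currents whose coefficients tend to $0$; hence $T_\delta\to T$ globally on $X$, with no limit-point extraction or mass bookkeeping needed.

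A secondary inaccuracy: the hypotheses of \cite{CMN15} do not literally match your interpolated metrics --- the equidistribution theorem there assumes the metrics are \emph{continuous} outside the analytic sets, while $h_{k,\delta}$ is only \emph{locally bounded} there under hypothesis (i). The paper devotes Theorem~\ref{T:main5} to verifying that the proof of \cite{CMN15} survives this weakening (using the Bergman kernel asymptotics of \cite{CM11} and rechecking the relevant dimension and mass estimates), so this step is more than ``a matter of quotation,'' although it is indeed only a verification rather than a new idea. With these two repairs your argument coincides with the paper's proof.
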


Working  with  sections of adjoint line bundles, i.\,e.\ 
using    the sequence of  spaces\\  $\Ac^p_K(L_1,\ldots,L_m),$ $p\geq 1,$
we obtain a more  general approximation result than  Theorem \ref{T:main1}, in the sense that the metrics $g_k$
are assumed to verify a weaker  positivity condition.
The next theorem  only deals  with two line bundles. However, it requires  a  very  weak assumption on the  sets where the metrics may not be  continuous.
\begin{theorem}\label{T:main2}
 Let $(X,\omega)$ be a compact K\"ahler manifold of dimension $n\geq2,$  and for $1\leq k\leq 2$ let 
$L_k$  be a holomorphic line 
bundle on $X$ endowed   with  two  singular Hermitian metrics $g_k$ and $h_k$ such that: 
\begin{itemize}
\item[(i)] $g_k$ and $h_k$  are  continuous outside a proper analytic subset $\Sigma_k\subset X;$ 
\item[(ii)] $c_1(L_k,h_k)\geq 0$ on $X$  and $c_1(L_k,g_{k})\geq\eta_k\omega$ on $X,$  where
 $\eta_k:\  X\to  [0,\infty)$ 
 is a function such that  for every $x\in X\setminus \Sigma_k$ there is a neighborhood
 $U_x$ of $x$ and a constant $c_x>0$ so that   
$\eta_k\geq  c_x$  on $U_x;$    
 
\item[(iii)]
 $\Sigma_1$ and $\Sigma_2$ 
are in general position. 
 \end{itemize}
 Then  there exists  a  sequence of currents $R_j\in  \Ac^{p_j}_K(L_1,L_2),$ where $p_j\nearrow\infty,$
such that $R_j$ converges  weakly on $X$  to 
 $ c_1(L_1,h_1)\wedge c_1(L_2,h_2)$  as $p\to\infty\,.$ 
\end{theorem}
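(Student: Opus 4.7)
The plan is to produce, for each $k\in\{1,2\}$ and each large $p$, a section $s_{p,k}\in H^0(X,L_k^p\otimes K_X)$ such that $\frac{1}{p}[s_{p,k}=0]$ converges weakly to $c_1(L_k,h_k)$, with $\{s_{p,1}=0\}$ and $\{s_{p,2}=0\}$ in general position, and then pass to the wedge product by a continuity argument. The approach parallels the one used for the ample case but replaces the classical Bergman kernel machinery with an Ohsawa-Takegoshi based version suited to adjoint bundles, which allows us to weaken the positivity of $g_k$ on $\Sigma_k$.

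First I would establish single-bundle equidistribution. Endow $H^0(X,L_k^p\otimes K_X)$ with the natural $L^2$-inner product induced by $h_k^p$ and $\omega^n$, and form the Bergman kernel function $B_{p,k}$ together with the associated normalized Fubini-Study current $\gamma_{p,k}=\frac{1}{p}dd^c\log B_{p,k}$. The upper bound for $\frac{1}{p}\log B_{p,k}$ on $X\setminus\Sigma_k$ is the standard submean-value inequality for plurisubharmonic functions. For the matching lower bound, I would apply the Ohsawa-Takegoshi-Manivel $L^2$ extension theorem: at any point $x\in X\setminus\Sigma_k$, the local strict positivity $c_1(L_k,g_k)\geq c_x\omega$ supplied by (ii), together with a mixing of metrics of the form $g_k^{\delta}h_k^{1-\delta}$ for small $\delta>0$, yields extensions of peak sections at $x$ whose $L^2$-norms with respect to $h_k$ are essentially controlled by the local weight of $h_k$ at $x$. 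This produces the lower bound and hence $\gamma_{p,k}\to c_1(L_k,h_k)$ weakly on $X$.

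Next, I would promote this averaged statement to an equidistribution result for individual sections. A variance estimate in the spirit of Shiffman-Zelditch and Dinh-Sibony shows that, with Fubini-Study probability tending to one on $\P H^0(X,L_k^p\otimes K_X)$, a random section $s$ satisfies $\frac{1}{p}[s=0]$ close to $\gamma_{p,k}$. The requirement that $\{s_{p,1}=0\}$ and $\{s_{p,2}=0\}$ be in general position cuts out a proper Zariski closed subset of the product of the two projective spaces of sections, so generic choices meet it; combining the two constraints produces the desired sections $s_{p,1},s_{p,2}$ defining the candidate current $R_p$.

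The main obstacle is the wedge-product convergence. Since the potentials of $c_1(L_k,h_k)$ are continuous off $\Sigma_k$ and $\Sigma_1,\Sigma_2$ are in general position by (iii), the product $c_1(L_1,h_1)\wedge c_1(L_2,h_2)$ is well-defined in the Bedford-Taylor sense on $X\setminus(\Sigma_1\cap\Sigma_2)$ and extends naturally across this codimension-two set. To pass to the limit one must rule out concentration of mass of $\frac{1}{p^2}[s_{p,1}=0]\wedge[s_{p,2}=0]$ along $\Sigma_1\cap\Sigma_2$. I would handle this by a truncation argument: on any compact $K\subset X\setminus(\Sigma_1\cap\Sigma_2)$, Bedford-Taylor continuity combined with the already-established individual convergences $\frac{1}{p}[s_{p,k}=0]\to c_1(L_k,h_k)$ yields convergence on $K$; the mass near $\Sigma_1\cap\Sigma_2$ is controlled uniformly in $p$ by a quantitative form of the Bergman kernel asymptotics together with (iii). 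A diagonal extraction across a shrinking sequence of neighborhoods of $\Sigma_1\cap\Sigma_2$ then selects the sequence $R_j$.
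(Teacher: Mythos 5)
Your argument hinges on the claim that, for the inner product defined by $h_k^p$ itself, the Bergman kernel $B_{p,k}$ of $H^0_{(2)}(X,L_k^p\otimes K_X)$ satisfies $\frac{1}{p}\log B_{p,k}\to 0$ off $\Sigma_k$, so that sections from these spaces give $\frac{1}{p}[s_{p,k}=0]\to c_1(L_k,h_k)$ directly. This step is a genuine gap. The Ohsawa--Takegoshi/H\"ormander construction controls the $L^2$-norm of the peak section only with respect to the metric whose curvature enters the estimate, here the mixed metric $g_k^{\delta}h_k^{1-\delta}$; converting that into a bound for the $h_k^p$-norm costs a factor of order $e^{Cp\delta}$ on compact subsets of $X\setminus\Sigma_k$ (fatal after taking $\frac{1}{p}\log$ with $\delta$ fixed) and fails altogether near $\Sigma_k$, where the weight of $h_k$ may tend to $-\infty$ while that of $g_k$ does not. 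Worse, the intermediate statement is false in general: take $X=\mathbb{P}^1\times\mathbb{P}^1$, $L_k=\mathcal{O}(1)\boxtimes\mathcal{O}(1)$, $g_k$ the product Fubini--Study metric, and $h_k=h_{\mathrm{FS}}\boxtimes h_{a_k}$ where $h_{a_k}$ has local weight $\log|w-a_k|$, with $a_1\neq a_2$. All hypotheses (i)--(iii) hold with $\Sigma_k=\mathbb{P}^1\times\{a_k\}$, but any section of $L_k^p\otimes K_X=\mathcal{O}(p-2)\boxtimes\mathcal{O}(p-2)$ that is $L^2$ for $h_k^p$ would have to vanish to order at least $p$ along $\Sigma_k$, which exceeds its degree, so $H^0_{(2)}(X,L_k^p\otimes K_X)=\{0\}$ for every $p$: there are no sections $s_{p,k}$ to choose and no Fubini--Study currents $\gamma_{p,k}$. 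Mere semipositivity of $h_k$, even in the presence of the auxiliary $g_k$, does not yield the single-bundle equidistribution you start from.

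The missing idea is to keep the perturbation in play to the very end instead of absorbing it into a Bergman lower bound for $h_k$. The paper works with the metrics $h_k^{1/(1+\epsilon)}g_k^{\epsilon/(1+\epsilon)}$, which are continuous off $\Sigma_k$ and have curvature at least $\frac{\epsilon\eta_k}{1+\epsilon}\,\omega$, hence satisfy the hypotheses of Theorem \ref{T:main3}; for each fixed $\epsilon$ this produces currents in $\Ac^{p}_K(L_1,L_2)$ converging to the perturbed wedge current, and a diagonal argument in the Dinh--Sibony distance (Lemma \ref{L:metric}, Proposition \ref{P:equi_approx}) over $\epsilon_j\searrow 0$, $p_j\nearrow\infty$ then gives $R_j\to c_1(L_1,h_1)\wedge c_1(L_2,h_2)$, since the perturbed wedge currents converge to the unperturbed one. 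Your sketch could be repaired along these lines (for instance measuring sections in the mixed metric and letting $\delta=\delta_p\to0$ with $\delta_p p\to\infty$), but note also that the wedge-product passage is more delicate than Bedford--Taylor continuity plus a uniform mass bound near $\Sigma_1\cap\Sigma_2$: in the paper it requires the Siu decomposition and a slicing argument along the codimension-two components of $\Sigma_1\cap\Sigma_2$ to rule out excess mass there.
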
 

The last approximation result  deals with  several  line bundles. However, it requires  a  strong  assumption
on the  set  where the metrics may not be  continuous.
\begin{theorem}\label{T:main2bis}
 Let $(X,\omega)$ be a compact K\"ahler manifold of dimension $n$ and $1\leq m\leq n$ be  an integer. For $1\leq k\leq m$ let $L_k$  be a holomorphic line bundle on $X$ endowed   with  two  singular Hermitian metrics $g_k$ and $h_k$ such that:  
\begin{itemize}
\item[(i)] $g_k$ and $h_k$  are  continuous outside a proper analytic subset $\Sigma \subset X;$ 
\item[(ii)]  $c_1(L_k,h_k)\geq 0$ on $X$  and $c_1(L_k,g_{k})\geq\eta_k\omega$ on $X,$  where
 $\eta_k:\  X\to  [0,\infty)$ 
   is a function such that  for every $x\in X\setminus \Sigma$ there is a neighborhood
 $U_x$ of $x$ and a constant $c_x>0$ so that 
$\eta_k\geq  c_x$  on $U_x;$    
  
\item[(iii)]
 $\codim(\Sigma )\geq m.$ 
 \end{itemize}
 Then  there exists  a  sequence of currents $R_j\in  \Ac^{p_j}_K(L_1,\ldots,L_m),$ where $p_j\nearrow\infty,$
such that $R_j$ converges  weakly on $X$  to 
 $ c_1(L_1,h_1)\wedge\ldots\wedge c_1(L_m,h_m)$  as $p\to\infty\,.$ 
\end{theorem}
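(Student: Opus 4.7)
My plan is to extend the two-bundle argument of Theorem \ref{T:main2} to $m$ bundles, using hypothesis (iii), $\codim\Sigma\geq m$, in two critical roles: to make sense of the target current on all of $X$, and to force the zero divisors into general position. First observe that since each pair $h_k,g_k$ is continuous on $X\setminus\Sigma$, the wedge product $T:=c_1(L_1,h_1)\wedge\ldots\wedge c_1(L_m,h_m)$ is well-defined on $X\setminus\Sigma$ by Bedford--Taylor and has locally finite mass near $\Sigma$; by the Skoda--El Mir extension theorem, and precisely because $\codim\Sigma\geq m$, it extends by zero across $\Sigma$ to a positive closed $(m,m)$-current on $X$. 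This extension is the target of the approximation.

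For each $k$ and each $p$, I would apply the Ohsawa--Takegoshi $L^2$ extension theorem to the twisted bundle $L^p_k\otimes K_X$ equipped with an $L^2$ structure built from $g_k$: the twist by $K_X$ absorbs the Bochner--Kodaira curvature term, so only pointwise strict positivity of $c_1(L_k,g_k)$ at a given point $x\in X\setminus\Sigma$ is required, and condition (ii) on $\eta_k$ furnishes exactly this. Following the proof of Theorem \ref{T:main2}, one obtains the Bergman-kernel asymptotics
\[
\frac{1}{p}\log\Bigl(\sum_\alpha |s_{p,k}^{(\alpha)}|^2_{h_k^p\otimes h_{K_X}}\Bigr)\longrightarrow 0
\]
locally uniformly on $X\setminus\Sigma$, for any $L^2$-orthonormal basis $\{s^{(\alpha)}_{p,k}\}$ of $H^0(X,L^p_k\otimes K_X)$. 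Consequently, the Fubini--Study currents $\gamma_{p,k}$ computed with respect to $h_k$ converge to $c_1(L_k,h_k)$ locally uniformly on $X\setminus\Sigma$.

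Next, for each large $p$ I would select, via a Bertini--Sard type genericity argument, sections $s_{p,k}\in H^0(X,L^p_k\otimes K_X)$ such that the divisors $\{s_{p,k}=0\}$ are in general position on $X$: outside $\Sigma$ this is standard generic transversality applied to the base-point-free part of the linear systems (which, by the previous step, eventually separates points of $X\setminus\Sigma$), while any problematic intersection component must be contained in $\Sigma$, whose codimension $\geq m$ already certifies the general-position requirement for every partial intersection lying in it. Combining the locally uniform convergence of $\frac{1}{p}\log|s_{p,k}|$ to the local weight of $h_k$ on compact subsets of $X\setminus\Sigma$ with Bedford--Taylor continuity of the Monge--Amp\`ere operator on locally bounded psh functions, I obtain
\[
R_j:=\frac{1}{p_j^m}[s_{p_j,1}=0]\wedge\ldots\wedge[s_{p_j,m}=0]\longrightarrow T\quad\text{weakly on }X\setminus\Sigma.
\]

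Since the cohomology classes of $R_j$ are uniformly bounded, any weak subsequential limit $R$ is a positive closed $(m,m)$-current on $X$ that agrees with $T$ on $X\setminus\Sigma$; the inequality $R\geq T$ (trivial extensions are minimal among positive extensions) together with equality of cohomological masses forces $R=T$ on all of $X$, yielding the desired global weak convergence. The principal technical obstacle I foresee is the general-position step for $m\geq 3$: the sections produced by the Ohsawa--Takegoshi construction may have their base locus concentrated in $\Sigma$, and one must simultaneously ensure Bertini-type transversality on $X\setminus\Sigma$ and control of the intersections within $\Sigma$ — and this is exactly the tension that assumption (iii) is tailor-made to resolve.
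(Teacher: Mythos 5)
Your argument has a genuine gap at its core: you never confront the fact that the metrics $h_k$ you are trying to approximate are only \emph{semipositively} curved, $c_1(L_k,h_k)\geq 0$, while the strict positivity hypothesis (ii) concerns the \emph{other} metrics $g_k$. The Bergman-kernel asymptotics $\frac1p\log P_{k,p}\to 0$ on $X\setminus\Sigma$ (Theorem \ref{T:Bka}) require the curvature of the metric defining the $L^2$ inner product to be bounded below by $\eta\omega$ with $\eta$ locally bounded away from $0$ off $\Sigma$; $h_k$ need not satisfy this, and for merely semipositive $h_k$ the lower bound on the Bergman kernel (which comes from an H\"ormander/Ohsawa--Takegoshi construction) simply fails in general. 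Your proposal mixes the two metrics: if you build the $L^2$ structure from $g_k$ (where Ohsawa--Takegoshi applies), then the resulting Fubini--Study currents converge to $c_1(L_k,g_k)$, not to $c_1(L_k,h_k)$; if you build it from $h_k$, you have no lower bound and no convergence. The paper's proof resolves exactly this tension by a regularization absent from your plan: it applies the equidistribution Theorem \ref{T:main3bis} to the interpolated metrics $h_k^{1/(1+\epsilon_j)}g_k^{\epsilon_j/(1+\epsilon_j)}$, whose curvature is $\geq\frac{\epsilon_j\eta_k}{1+\epsilon_j}\,\omega$, and then lets $\epsilon_j\searrow 0$, extracting a diagonal sequence $R_j=T_{jp_j}\in\Ac^{p_j}_K(L_1,\ldots,L_m)$ via the Dinh--Sibony distance on currents (Lemma \ref{L:metric} and Proposition \ref{P:equi_approx}). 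Without some such perturbation of $h_k$ by $g_k$, the key step of your outline (Fubini--Study currents for $h_k$ converging to $c_1(L_k,h_k)$) is unjustified and in fact false in general.

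A second, related problem is your identification of the target current. The current $c_1(L_1,h_1)\wedge\ldots\wedge c_1(L_m,h_m)$ in the statement is the Bedford--Taylor/Demailly product, well defined on all of $X$ because the local weights are locally bounded off $\Sigma$ and $\codim\Sigma\geq m$; it can carry mass on $\Sigma$ (already for $m=n$ and weights with logarithmic poles the product is a sum of point masses on $\Sigma$). Your proposed target, the trivial Skoda--El Mir extension by zero of the product from $X\setminus\Sigma$, is in general a strictly smaller current, so even if your construction converged it could converge to the wrong limit. Relatedly, your closing step ``$R\geq T$ since trivial extensions are minimal, plus equality of masses'' assumes precisely the inequality over $\Sigma$ that is the delicate point; in the paper this is where Proposition \ref{P:locMA} (the local continuity of the Monge--Amp\`ere operator under locally uniform convergence off an analytic set of codimension $\geq m$, from \cite{CM11}) does the work, and in the two-bundle case it requires the Siu decomposition and slicing argument of Theorem \ref{T:FSwedge}. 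Finally, your Bertini step is also stated too loosely: general position of the zero divisors is obtained in the paper measure-theoretically (Propositions \ref{P:Bertini} and \ref{P:Bertini2}), using that the base loci $\Sigma_{k,p}$ accumulate on $\Sigma$ and that $\codim\Sigma\geq m$ kills any limit component by the support theorem, not by a transversality argument for individually constructed sections.
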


\subsection{Equidistribution results}

In order to  investigate the equidistribution  problem, we need to introduce some more notation and terminology.
 Let $(L_k,h_k)$, $1\leq k\leq m\leq n$, be $m$ singular Hermitian holomorphic line bundles on $(X,\omega)$. 
Let $H^0_{(2)}(X,L_k^p)$ (resp.\ $H^0_{(2)}(X,L_k^p\otimes K_X)$) be the Bergman space of $L^2$-holomorphic sections of $L_k^p $
(resp.\ of $L_k^p\otimes K_X $)   relative to the metric $h_{k,p}:=h_k^{\otimes p}$ 
induced by $h_k,$ and the metric $h^{K_X}$ on $K_X$ induced by the volume form $\omega^n$ on $X$.
These  spaces are endowed with the respective inner product
\begin{equation}\label{e:ip}
\begin{split}
(S,S')_{k,p}&:=\int_{X}\langle S,S'\rangle_{h_{k,p}}\,\omega^n\,,\;\,S,S'\in H^0_{(2)}(X,L_k^p),\\
(S,S')^K_{k,p}&:=\int_{X}\langle S,S'\rangle_{h_{k,p}\otimes h^{K_X}}\,\omega^n\,,\;\,S,S'\in H^0_{(2)}(X,L_k^p\otimes K_X).
\end{split}
\end{equation}
 For every $p\geq 1$  and $1\leq k\leq m,$ let $\sigma_{k,p}$ be   the Fubini-Study 
volume on  the projective space $\P H^0_{(2)}(X,L^p_k)$ (resp.\  $\P H^0_{(2)}(X,L^p_k\otimes K_X)$)
which is  the projectivization of the  finite-dimensional complex  spaces  $ H^0_{(2)}(X,L^p_k)$ (resp.\  $ H^0_{(2)}(X,L^p_k\otimes K_X)$) endowed  with the above 
inner product $(S,S')_{k,p}$ (resp.\  $(S,S')^K_{k,p}$). Clearly, the  measure $\sigma_{k,p}$  depends not only on $L_k$ and  $p,$
but also  on $h_k.$ 

For every $p\geq 1$ we consider the multi-projective spaces
\begin{equation}\label{e:X_p}
\begin{split}
{\mathbb X}_{p}&:= \P H^0_{(2)}(X,L^p_1)\times\ldots\times\P H^0_{(2)}(X,L^p_m),\\
{\mathbb X}_{K,p}&:= \P H^0_{(2)}(X,L^p_1\otimes K_X)\times\ldots\times\P H^0_{(2)}(X,L^p_m\otimes K_X)
\end{split}
\end{equation} 
equipped with the probability measure $\sigma_p$ which is the product of the Fubini-Study 
volumes on the components. If $S\in H^0(X,L_k^p)$ (resp.\ $S\in H^0(X,L_k^p\otimes K_X)$), we denote by $[S=0]$ the current of 
integration (with multiplicities) over the analytic 
hypersurface $\{S=0\}$ of $X$.  Set 
\[
[\bfs_p=0]:=[s_{p1}=0]\wedge\ldots\wedge[s_{pm}=0]\,,\:\:\text{for 
$\bfs_p=(s_{p1},\ldots,s_{pm})\in {\mathbb X}_{p}$ or $\in {\mathbb X}_{K,p}$,}
\]
whenever the hypersurfaces   $\{s_{p1}=0\},\ldots,\{s_{pm} = 0\}$ of $X$ are in general position. 
We also consider the probability spaces
\begin{equation}\label{e:Omega}
\begin{split}
(\Omega,\sigma_\infty)&:= \prod_{p=1}^\infty ({\mathbb X}_{p},\sigma_p),\,\\
(\Omega_K,\sigma_\infty)&:= \prod_{p=1}^\infty ({\mathbb X}_{K,p},\sigma_p)\,.
 \end{split}
\end{equation}
For the  sake of clarity  we  may write  $\Omega(h_1,\ldots,h_m),$  $\sigma_\infty(h_1,\ldots,h_m)$  (resp.\ $\Omega_K(h_1,\ldots,h_m),$ $\sigma_\infty(h_1,\ldots,h_m)$) 
in order to make  precise the dependence of $(\Omega,\sigma_\infty)$ (resp.\  $(\Omega_K,\sigma_\infty)$)  on the metrics $h_1,\ldots, h_m.$
\begin{definition}
\label{D:equidistri}
We say  that $(\Omega,\sigma_\infty)$ (resp.\ $(\Omega_K,\sigma_\infty)$)  (or simply  $\Omega$ (resp.\ $\Omega_K$)
if $\sigma_\infty$ is  clear  from the  context)   {\it equidistributes toward} a positive closed $(m,m)$ current $T$ defined on $X$ if  
   for $\sigma_\infty$-a.e. $\{\bfs_p\}_{p\geq1}\in  \Omega$ (resp.\ $\in \Omega_K$), 
we have in the weak sense of currents on $X$, 
$$\frac{1}{p^m}\,[\bfs_p=0]\to T\,\text{ as }p\to\infty\,.$$
\end{definition}
 Our first equidistribution  theorem  only deals  with two line bundles. However, it requires  a  very  weak assumption on the  sets where the metrics may not be  continuous.

 \begin{theorem}\label{T:main3}
Let $(X,\omega)$ be a compact K\"ahler manifold of dimension $n\geq 2$ and 
$(L_k,h_{k})$, $1\leq k\leq 2$, be two singular Hermitian holomorphic line 
bundles on $X$ such that:
\begin{itemize}
\item[(i)] $h_{k}$ is  continuous outside a proper analytic subset $\Sigma_{k}\subset X;$ 
\item[(ii)] $c_1(L_k,h_{k})\geq\eta_k\omega$ on $X,$  where
 $\eta_k:\  X\to  [0,\infty)$ 
  is a function such that  for every $x\in X\setminus \Sigma_k$ there is a neighborhood
 $U_x$ of $x$ and a constant $c_x>0$ so that 
$\eta_k\geq  c_x$  on $U_x;$    
  
\item[(iii)]
 $\Sigma_1$ and $\Sigma_2$ 
are in general position. 
\end{itemize}
 Then   $\Omega_K$   equidistributes toward  the  current $ c_1(L_1,h_1)\wedge  c_1(L_2,h_2).$ 
\end{theorem}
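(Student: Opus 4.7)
The plan is to combine single-bundle equidistribution for $L^2$ holomorphic sections of $L_k^p\otimes K_X$ with a wedge-product argument that exploits the general position of $\Sigma_1$ and $\Sigma_2$. Since the probability space $(\Omega_K,\sigma_\infty)$ factors as a product across $k=1,2$, Fubini's theorem reduces the task to: (a) proving that for each $k$ the normalized zero currents $p^{-1}[s_{p,k}=0]$ converge weakly to $c_1(L_k,h_k)$ for $\sigma_{k,\infty}$-a.e.\ sequence $\{s_{p,k}\}_p$; and (b) upgrading this almost-sure individual convergence to convergence of the wedge products $p^{-2}[\bfs_p=0]\to c_1(L_1,h_1)\wedge c_1(L_2,h_2)$.

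For step (a) I would work with the Bergman kernel function $P_{k,p}$ of $H^0_{(2)}(X,L_k^p\otimes K_X)$ and its Fubini-Study current $\gamma_{k,p}:=\frac{1}{2p}\ddc\log P_{k,p}$. The key input is the Bergman-kernel asymptotic $\gamma_{k,p}\to c_1(L_k,h_k)$ weakly; in this singular setting the twist by $K_X$ is essential, since the Ohsawa-Takegoshi extension theorem lets one produce peak sections at every $x\in X\setminus\Sigma_k$ using only the local strict positivity $c_1(L_k,h_k)\ge c_x\omega$ on $U_x$ rather than any global strict positivity. With this in hand, Poincar\'e-Lelong expresses the deviation $p^{-1}[s=0]-\gamma_{k,p}$ as the $\ddc$ of the nonpositive quasi-psh function $p^{-1}\log\bigl(|s|_{h_{k,p}\otimes h^{K_X}}/P_{k,p}^{1/2}\bigr)$, whose $L^2(\sigma_{k,p})$-norm is $O(\log\dim H^0_{(2)}(X,L_k^p\otimes K_X))=O(\log p)$. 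Chebyshev combined with Borel-Cantelli, in the random-zeros framework of \cite{CM13b,CMN15}, then yields the almost-sure weak convergence $p^{-1}[s_{p,k}=0]\to c_1(L_k,h_k)$.

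The main obstacle is step (b): weak convergence of positive closed $(1,1)$-currents does not in general pass to wedge products, and here the limits $c_1(L_k,h_k)$ are genuinely singular along $\Sigma_k$. Continuity of $h_k$ off $\Sigma_k$ means its local weights are bounded off $\Sigma_k$, so by Bedford-Taylor the wedge $c_1(L_1,h_1)\wedge c_1(L_2,h_2)$ is well-defined on $X\setminus(\Sigma_1\cup\Sigma_2)$, and general position together with $\codim(\Sigma_1\cap\Sigma_2)\ge 2$ allows it to be extended trivially to a globally defined positive closed $(2,2)$-current of the expected cohomology class. To prove convergence I would follow the scheme of \cite{CMN15}: localize on coordinate balls $U\subset X\setminus(\Sigma_1\cup\Sigma_2)$, note that the local psh potentials $p^{-1}\log|s_{p,k}|_{h_{k,p}\otimes h^{K_X}}$ are uniformly bounded above and, by step (a), converge in $L^1_{\loc}(U)$ to the local weight of $h_k$, then apply Bedford-Taylor continuity of the Monge-Amp\`ere operator along $L^1_{\loc}$-convergent, uniformly bounded sequences of psh functions to obtain wedge convergence on $U$. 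The delicate remaining task is controlling the mass of $p^{-2}[\bfs_p=0]$ near $\Sigma_1\cup\Sigma_2$, for which one uses the general position hypothesis and the fact that the cohomology class of $p^{-2}[\bfs_p=0]$ is fixed to rule out mass escape to the singular locus, thereby upgrading local weak convergence to global weak convergence on $X$. This mass-control step is the technical heart of the proof and parallels the arguments underlying Theorem \ref{T:main2}.
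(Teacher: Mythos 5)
Your step (a) is sound and matches the paper's machinery (Bergman kernel asymptotics for the adjoint bundles $L_k^p\otimes K_X$, which is exactly where the hypothesis of only local strict positivity off $\Sigma_k$ enters, via the $L^2$-estimates for $\dbar$). The genuine gap is in step (b). First, the local potentials $p^{-1}\log|s_{p,k}|_{h_{k,p}\otimes h^{K_X}}$ are not locally uniformly bounded on $U\setminus(\Sigma_1\cup\Sigma_2)$: they have $-\infty$ poles along the zero divisors $\{s_{p,k}=0\}$, which in general meet every such ball $U$. Second, and more fundamentally, Bedford--Taylor theory does \emph{not} assert continuity of wedge products $dd^cu_p\wedge dd^cv_p$ under mere $L^1_{\loc}$ convergence of (even uniformly bounded) psh functions; continuity requires monotone convergence, local uniform convergence, or convergence in capacity, and there are classical counterexamples otherwise. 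So almost-sure convergence of each factor $p^{-1}[s_{p,k}=0]\to c_1(L_k,h_k)$ (which is $L^1_{\loc}$ convergence of potentials) does not by itself yield convergence of the wedge products, and your Fubini reduction to single-factor statements cannot be repaired at this level of generality.

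This is precisely why the paper takes a different route: it wedges the \emph{Fubini--Study currents} $\gamma_{k,p}$, whose potentials $u_{k,p}$ do converge locally uniformly to the weights $u_k$ off $\Sigma_k$ (Theorem \ref{T:Bka}, Part 1), so that a Bedford--Taylor-type argument applies (Lemma \ref{L:FSwedge}); the comparison between $p^{-2}[\bfs_p=0]$ and $p^{-2}\gamma_{1,p}\wedge\gamma_{2,p}$ for $\sigma_\infty$-a.e.\ $\bfs$ is then obtained not from per-factor Borel--Cantelli but from the Dinh--Sibony theory of meromorphic transforms on the product space, with explicit speed (Theorem \ref{T:speed}), using the dimension growth $d_{k,p}\asymp p^n$. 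Finally, your claim that general position plus the fixed cohomology class ``rules out mass escape'' near $\Sigma_1\cup\Sigma_2$ is not an argument: equality of total masses only reduces the problem to proving the inequality $T\geq c_1(L_1,h_1)\wedge c_1(L_2,h_2)$ for any limit point $T$, and the delicate part is exactly this inequality across the $(n-2)$-dimensional components of $\Sigma_1\cap\Sigma_2$, which the paper handles via Federer's support theorem (for components of dimension $\leq n-3$), Siu decomposition, and a slicing argument with Fatou's lemma reducing to the two-dimensional case of Lemma \ref{L:FSwedge}. Without these ingredients your outline does not close.
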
 

 The last  equidistribution result  deals with  several  line bundles. However, it requires  a  strong  assumption
on the  set  where the metrics may not be  continuous.
  \begin{theorem}\label{T:main3bis}
Let $(X,\omega)$ be a compact K\"ahler manifold of dimension $n$ and 
$(L_k,h_{k})$, $1\leq k\leq m\leq n$, be $m$ singular Hermitian holomorphic line 
bundles on $X$ such that: 
\begin{itemize}
\item[(i)] $h_{k}$ is  continuous  outside a proper analytic subset $\Sigma \subset X;$ 
\item[(ii)] $c_1(L_k,h_{k})\geq\eta_k\omega$ on $X,$  where
 $\eta_k:\  X\to  [0,\infty)$ 
  is a function such that  for every $x\in X\setminus \Sigma$ there is a neighborhood
 $U_x$ of $x$ and a constant $c_x>0$ so that 
$\eta_k\geq  c_x$  on $U_x;$    
  
\item[(iii)]
 $\codim(\Sigma )\geq m.$ 
\end{itemize}
 Then   $\Omega_K$   equidistributes toward  the  current $ c_1(L_1,h_1)\wedge\ldots\wedge c_1(L_m,h_m).$ 
\end{theorem}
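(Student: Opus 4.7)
\medskip

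\noindent\textbf{Proof proposal.}
The plan is to reduce the multivariate statement to the single-bundle case plus a wedge-product continuity argument based on the codimension hypothesis.

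First, for each fixed index $k \in \{1,\ldots,m\}$, I would apply the single-bundle equidistribution result (the analogue of Theorem \ref{T:main3bis} when $m=1$, which is essentially the classical Dinh–Sibony / Shiffman–Zelditch statement adapted to our singular pseudo-effective setting, as developed in the preceding sections of the paper). This yields that for $\sigma_\infty$-a.e.\ sequence $\{s_{pk}\}_{p \geq 1}$ drawn from $\prod_{p} \P H^0_{(2)}(X,L_k^p \otimes K_X)$, one has $\frac{1}{p}[s_{pk}=0] \to c_1(L_k,h_k)$ weakly on $X$ as $p \to \infty$. The positivity condition $c_1(L_k,h_k)\geq \eta_k \omega$ together with Bergman kernel asymptotics (applied on relatively compact open subsets of $X\setminus \Sigma$ where $\eta_k$ is bounded below by a positive constant) is precisely what is needed to run this argument for each $k$.

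Second, since $\sigma_\infty = \prod_p \sigma_p$ is a product measure and each $\sigma_p$ is itself a product of the $m$ Fubini--Study volumes, Fubini's theorem gives a single full-measure subset $\Omega_K' \subset \Omega_K$ such that for every $\{\bfs_p\}_{p \geq 1} \in \Omega_K'$ the convergence $\frac{1}{p}[s_{pk}=0] \to c_1(L_k,h_k)$ holds simultaneously for all $k=1,\ldots,m$.

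Third, I would exploit the codimension hypothesis $\codim(\Sigma) \geq m$ to make sense of the wedge products. Outside the common singular set $\Sigma$, each metric $h_k$ is continuous, so each current $c_1(L_k,h_k)$ admits continuous local potentials there, and their wedge product is well-defined on $X\setminus \Sigma$ by Bedford--Taylor theory; the codimension assumption on $\Sigma$ then ensures, via the standard extension argument for positive closed currents across small analytic sets (cf.\ \cite[Corollary 2.11, Proposition 2.12]{D93}), that $c_1(L_1,h_1)\wedge\ldots\wedge c_1(L_m,h_m)$ extends uniquely as a positive closed $(m,m)$-current on $X$. The same reasoning, applied on the level of approximants, shows that for almost every sequence in $\Omega_K'$ the hypersurfaces $\{s_{p1}=0\},\ldots,\{s_{pm}=0\}$ are in general position for all $p$ large (a Bertini-type genericity statement, which is also implicit in the proof of the approximation result Theorem \ref{T:main2bis}), so the currents $\frac{1}{p^m}[\bfs_p=0]$ are well-defined members of $\Ac_K^p(L_1,\ldots,L_m)$.

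The final step, and the main obstacle, is upgrading the simultaneous weak convergence of the factors to weak convergence of the wedge products. Here I would follow the deterministic strategy behind Theorem \ref{T:main2bis}: test against a fixed smooth form of bidegree $(n-m,n-m)$, localize the pairing outside and near $\Sigma$ using a cutoff, handle the portion away from $\Sigma$ by a standard continuity-of-Monge--Ampère argument (the potentials of $\frac{1}{p}\log|s_{pk}|^2_{h_{k,p}}$ converge in $L^1_{\loc}(X\setminus\Sigma)$ to the local weights of $h_k$, and their wedge products with continuous potentials pass to the limit), and absorb the contribution near $\Sigma$ using the codimension assumption to bound the mass of the approximating currents on a shrinking neighborhood of $\Sigma$ uniformly in $p$. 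The delicate point is obtaining the uniform mass bound near $\Sigma$: this requires a uniform-in-$p$ control of the Lelong numbers of the $\frac{1}{p^m}[\bfs_p=0]$, which is provided by the strict positivity $\eta_k>0$ away from $\Sigma$ together with the $L^2$-normalization of the sections, exactly as in the proof of Theorem \ref{T:main2bis}. Combining these estimates, weak convergence $\frac{1}{p^m}[\bfs_p=0] \to c_1(L_1,h_1)\wedge\ldots\wedge c_1(L_m,h_m)$ follows for every sequence in $\Omega_K'$, completing the proof.
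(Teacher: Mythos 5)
There is a genuine gap, and it sits exactly where you flag the ``main obstacle'': the passage from simultaneous convergence of the factors to convergence of the wedge products. Knowing that $\frac{1}{p}[s_{pk}=0]\to c_1(L_k,h_k)$ weakly for each $k$ (equivalently, that the local potentials $\frac{1}{p}\log|s_{pk}|_{h_{k,p}}$ converge in $L^1_{\loc}$) does not allow you to pass to the limit in the mixed Monge--Amp\`ere products, even away from $\Sigma$ where the limit potentials are continuous: the Bedford--Taylor continuity theorems require locally uniform (or monotone) convergence of the potentials, and the potentials of the random zero divisors have logarithmic poles along moving hypersurfaces, so they never converge locally uniformly to the continuous weights $u_k$ on any open set meeting the zero sets. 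This is precisely why the paper does not argue through the individual zero currents at all. Its proof has two genuinely different ingredients that your proposal does not supply: (a) a deterministic statement about the Fubini--Study currents, namely $\frac{1}{p^m}\,\gamma_{1,p}\wedge\ldots\wedge\gamma_{m,p}\to c_1(L_1,h_1)\wedge\ldots\wedge c_1(L_m,h_m)$, which is legitimate because the Fubini--Study potentials $u_{k,p}=u_k+\rho/p+\frac{1}{2p}\log P_{k,p}$ do converge locally uniformly on $X\setminus\Sigma$ by the Bergman kernel asymptotics of Theorem \ref{T:Bka} (this is where hypotheses (i)--(iii) and Proposition \ref{P:locMA} enter); and (b) a probabilistic estimate, Theorem \ref{T:speed}, based on Dinh--Sibony's theory of meromorphic transforms, which compares the full intersection current $\frac{1}{p^m}[\bfs_p=0]$ directly with $\frac{1}{p^m}\,\gamma_{1,p}\wedge\ldots\wedge\gamma_{m,p}$ for $\sigma_\infty$-a.e.\ sequence, with explicit speed. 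Your marginal (single-bundle) equidistribution plus Fubini gives no control whatsoever on the joint behavior of the $m$ zero divisors, and no cutoff/mass-bound argument near $\Sigma$ can repair this, since the failure of continuity of wedge products under weak convergence occurs away from $\Sigma$ as well.

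A secondary but real problem is the appeal to ``the deterministic strategy behind Theorem \ref{T:main2bis}'' and its Lelong-number control: in the paper, Theorem \ref{T:main2bis} is not proved independently; it is deduced from Theorem \ref{T:main3bis} via Proposition \ref{P:equi_approx}, so borrowing its (nonexistent) deterministic proof here is circular. To close the gap you would need, in one form or another, the two inputs above: locally uniform convergence of the Fubini--Study potentials on $X\setminus\Sigma$ (Theorem \ref{T:Bka} part 1 together with Proposition \ref{P:locMA}, which uses $\codim\Sigma\geq m$ to define the wedges), and the Dinh--Sibony-type a.e.\ comparison of $[\bfs_p=0]$ with the wedge of Fubini--Study currents (Theorem \ref{T:speed}), which also relies on the Bertini-type general position statements of Propositions \ref{P:FSwedge} and \ref{P:Bertini2} and the dimension growth estimate for $H^0_{(2)}(X,L_k^p\otimes K_X)$.
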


\subsection{Organization of the article}
  In Section \ref{S:EiA} we first
present a method which allows us to deduce approximation   results from   equidistribution theorems.
Using this  method as well as   our previous work \cite{CMN15},  
the remainder of the  section  shows that  equidistribution  theorems
(such as Theorems \ref{T:main3}, \ref{T:main3bis}, \ldots) imply  approximations theorems
(such as Theorems \ref{T:main2}, \ref{T:main2bis}, \ldots).  

The next two sections  develop necessary tools.
Section \ref{S:Bka}
studies the dimension growth of section spaces and Bergman kernel functions.
Section \ref{S:FS} establishes the convergence   towards intersection of Fubini-Study currents. Here  we apply Dinh-Sibony  equidistribution results for meromorphic transforms \cite{DS06}.

Having  these two  tools  at hands and using   the intersection theory of positive  closed  currents,
our first equidistribution theorem  for two   line bundles, i.\,e.\ Theorem \ref{T:main3}, will be proved in 
Section \ref{S:main3}. 
 
   Finally,  Section \ref{S:E_Adj} concludes the article  with  the proof of
 our second equidistribution   theorem, i.\,e.\ Theorem \ref{T:main3bis}.
 
\medskip


\section{Equidistribution implies  approximation }\label{S:EiA}

Let $(X,\omega)$  be  a  compact K\"ahler manifold  of dimension $n$ and $0\leq m\leq n$ be an  integer.
In \cite{DS09}  Dinh and Sibony    have introduced  the  following natural metric on the space   of positive closed $(m,m)$-currents on $X.$   If $R$ and $S$ are such currents,
define
$$
\dist(R,S) := \sup\limits_{\| \Phi\|_{\Cc^1}\leq 1}
|\langle R-S,\Phi\rangle|,
$$
where $\Phi$ is a smooth $(n-m, n-m)$-form on $X$ and we use the
sum of $\Cc^1$-norms of its coefficients for a fixed atlas on $X$. 

\begin{lemma}\label{L:metric}
\begin{itemize}
\item[(i)]
On the convex set of positive closed $(m,m)$-currents of mass $\leq 1$ on $X$, the topology induced by the above distance coincides with the weak
topology.
\item[(ii)] Assume that $T, (T_j)_{j=1}^\infty, (T_{jp})_{j,p=1}^\infty$ are  positive closed $(m,m)$-currents on $X$ such that
$T_j\to T$ as  $j\to\infty$ and  that  for each $j\in\N^*,$ $T_{jp}\to T_j$ as $p\to\infty.$
Then   there is a subsequence  $(p_j)_{j=1}^\infty\subset\N^*\nearrow \infty$ such that $T_{j p_j}\to T$   as $j\to\infty.$
\end{itemize}
\end{lemma}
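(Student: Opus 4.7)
The plan is to treat (i) by a direct compactness argument comparing the $\Cc^1$-norm with the $\Cc^0$-norm, and then deduce (ii) by a straightforward diagonal argument inside the metric space furnished by (i).

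For part (i), the implication $\dist(R_n,R)\to 0 \Rightarrow R_n\to R$ weakly is immediate from the definition of $\dist$: given any smooth test form $\Phi$, apply the distance estimate to $\Phi/\max(1,\|\Phi\|_{\Cc^1})$ to obtain $\langle R_n,\Phi\rangle\to\langle R,\Phi\rangle$. For the converse, I would fix $R_n\to R$ weakly with all currents of mass $\leq 1$, and exploit Arzel\`a--Ascoli on the compact manifold $X$: the unit ball $B$ of $\Cc^1$ smooth $(n-m,n-m)$-forms is relatively compact in the $\Cc^0$-norm. A positive closed $(m,m)$-current of mass $\leq 1$ extends (via its measure-valued coefficients in local charts, combined with a partition of unity) to a continuous functional on $\Cc^0$-forms whose operator norm is bounded by a constant $C=C(\omega)$ independent of the current. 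Given $\varepsilon>0$, choose smooth forms $\Phi_1,\ldots,\Phi_N\in B$ so that every $\Phi\in B$ lies within $\varepsilon$ in $\Cc^0$-norm of some $\Phi_i$. Then
$$|\langle R_n - R,\Phi\rangle| \leq |\langle R_n - R,\Phi_i\rangle| + 2C\varepsilon,$$
and weak convergence provides $N_0$ such that $|\langle R_n-R,\Phi_i\rangle|<\varepsilon$ for all $i$ and all $n\geq N_0$. This yields $\dist(R_n,R)\leq (2C+1)\varepsilon$, proving the claim.

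For part (ii), convergence $T_j\to T$ forces $\|T_j\|\to\|T\|$ (test weakly against $\omega^{n-m}$), so the sequence $\{T_j\}\cup\{T\}$ has uniformly bounded mass; similarly, for each fixed $j$, the tail $\{T_{jp}\}_{p}\cup\{T_j\}$ is uniformly bounded. After a uniform rescaling, part (i) therefore applies and gives $\dist(T_j,T)\to 0$ as $j\to\infty$, and for each $j$, $\dist(T_{jp},T_j)\to 0$ as $p\to\infty$. I would then pick $p_j\nearrow\infty$ inductively with $\dist(T_{jp_j},T_j)<1/j$, and conclude by the triangle inequality
$$\dist(T_{jp_j},T)\leq \dist(T_{jp_j},T_j)+\dist(T_j,T)\longrightarrow 0,$$
so (i) yields $T_{jp_j}\to T$ weakly, as required.

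The main obstacle is the nontrivial direction of (i): specifically, justifying that positive closed currents of bounded mass extend to bounded linear functionals on $\Cc^0$-forms with a uniform norm bound. Without this, the $\Cc^0$-approximation of test forms would not control the pairing. The standard resolution goes via the local representation of a positive current by measure-valued coefficients together with the Chern--Levine--Nirenberg-type estimate that bounds the total variation of these coefficients by a multiple of the mass; a partition of unity then globalizes the bound on $X$.
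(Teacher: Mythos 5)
Your proof is correct. For part (ii) you follow essentially the same route as the paper: observe that weak convergence of positive currents gives convergence (hence uniform boundedness) of masses by pairing with $\omega^{n-m}$, translate the two weak convergences into convergence in the distance via (i), pick $p_j\nearrow\infty$ with $\dist(T_{jp_j},T_j)<1/j$, and conclude by the triangle inequality; the paper does exactly this (it phrases the mass normalization as ``passing to a subsequence so the masses have a common bound,'' which is the same device as your rescaling, and both are harmless since only the tails in $p$ matter and $\dist$ is homogeneous). The difference is in part (i): the paper does not prove it at all, but simply cites Dinh--Sibony \cite[Proposition 2.1.4]{DS09}, whereas you give a self-contained argument via Arzel\`a--Ascoli compactness of the $\Cc^1$ unit ball in $\Cc^0$, a finite $\Cc^0$-net, and the uniform bound $|\langle R,\Phi\rangle|\leq C\|\Phi\|_{\Cc^0}$ for positive currents of mass $\leq 1$ (the Chern--Levine--Nirenberg-type control of the coefficient measures by the trace measure, globalized by a partition of unity). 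You correctly identify that last uniform bound as the crux, and it is exactly where positivity is used; this is the standard proof of the cited result, so your route buys independence from the reference at the cost of a few extra lines, while the paper's citation keeps the lemma short. No gaps.
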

\begin{proof}
Assertion (i) has been proved in    \cite[Proposition 2.1.4]{DS09}.

By passing to a subsequence  if necessary, we may assume  without loss of generality that  the masses of  currents $T, (T_j)_{j=1}^\infty, (T_{jp})_{j,p=1}^\infty$ are all bounded from above   by  a 
common finite constant. Therefore, 
applying assertion (i), we  obtain that
$$\lim_{j\to\infty}\dist(T_j,T)=0\quad\text{ and}\quad  \lim_{p\to\infty}\dist(T_{jp},T_j)=0\quad \text{for each}\ j\in\N^*.$$ 
The second limit  shows that for every $j\in\N^*,$  there is $p_j>p_{j-1}$ such that $\dist(T_{jp_j},T_j)\leq 1/j.$
This, combined  with the first limit, implies that  $\lim_{j\to\infty}\dist(T_{jp_j},T)=0,$  
  proving assertion (ii) in view of assertion (i).
\end{proof}
The following result  gives an  effective implication from  equidistribution problems  to approximation issues. 
\begin{proposition}
\label{P:equi_approx}
Let $(X,\omega)$ be a compact K\"ahler manifold of dimension $n$ and $1\leq m\leq n$ be  an integer. For $1\leq k\leq m$ let 
$L_k$  be a holomorphic line 
bundle on $X$ endowed   with  two  singular Hermitian metrics $g_k$ and $h_k$ such that 
\begin{itemize}
\item[(i)] $g_k$ and $h_k$  are locally bounded outside a proper analytic subset $\Sigma_k\subset X;$ 
\item[(ii)] $c_1(L_k,g_{k})\geq 0$   and  $c_1(L_k,h_k)\geq 0$ on $X;$ 
\item[(iii)]
 $\Sigma_1,\ldots,\Sigma_m$ 
are in general position;
\item[(iv)] there is a  sequence $\epsilon_j\searrow 0$  such that
$\Omega(h_1^{1\over 1+\epsilon_j} g_1^{\epsilon_j\over 1+\epsilon_j},\ldots,h_m^{1\over 1+\epsilon_j}
g_m^{\epsilon_j\over 1+\epsilon_j} )$ \\
{\rm(}resp.\ $\Omega_K(h_1^{1\over 1+\epsilon_j}g_1^{\epsilon_j\over 1+\epsilon_j},
\ldots,h_m^{1\over 1+\epsilon_j}g_m^{\epsilon_j\over 1+\epsilon_j})${\rm)} 
equidistributes towards the  current \\ 
$c_1(L_1, h_1^{1\over 1+\epsilon_j}g_1^{\epsilon_j\over 1+\epsilon_j} )
 \wedge\ldots\wedge c_1(L_m, h_m^{1\over 1+\epsilon_j}g_m^{\epsilon_j\over 1+\epsilon_j})$ 
 for all $j\in\N^*.$ 
 \end{itemize}
 Then  there exists  a  sequence of currents $R_j\in  \Ac^{p_j}(L_1,\ldots,L_m)$ 
{\rm(}resp.\  $R_j\in  \Ac^{p_j}_K(L_1,\ldots,L_m)${\rm)},  where $p_j\nearrow\infty,$
such that $R_j$ converges  weakly on $X$  to 
 $ c_1(L_1,h_1)\wedge\ldots\wedge c_1(L_m,h_m)$  as $p\to\infty\,.$ 
 \end{proposition}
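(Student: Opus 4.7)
The plan is to use hypothesis (iv) to produce, for each $j$, a sequence of currents in $\Ac^{p}(L_1,\ldots,L_m)$ (resp.\ $\Ac^p_K(L_1,\ldots,L_m)$) converging as $p\to\infty$ to a perturbed wedge product $T_j$; then to show that $T_j\to T:=c_1(L_1,h_1)\wedge\cdots\wedge c_1(L_m,h_m)$ as $j\to\infty$; and finally to extract a diagonal subsequence via Lemma \ref{L:metric}(ii).

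For the first step, write $h_{k,j}:=h_k^{1/(1+\epsilon_j)}g_k^{\epsilon_j/(1+\epsilon_j)}$, so that $c_1(L_k,h_{k,j})=\tfrac{1}{1+\epsilon_j}c_1(L_k,h_k)+\tfrac{\epsilon_j}{1+\epsilon_j}c_1(L_k,g_k)$ is positive and has locally bounded local weights on $X\setminus\Sigma_k$. Define $T_j:=c_1(L_1,h_{1,j})\wedge\cdots\wedge c_1(L_m,h_{m,j})$, which is a well-defined positive closed $(m,m)$-current because of (i)--(iii). Hypothesis (iv) says that $\Omega(h_{1,j},\ldots,h_{m,j})$ (resp.\ $\Omega_K(\ldots)$) equidistributes toward $T_j$; in particular the set of $\{\bfs_p\}_{p\ge 1}$ for which the currents $R_{j,p}:=\tfrac{1}{p^m}[\bfs_p=0]$ lie in $\Ac^p(L_1,\ldots,L_m)$ (resp.\ $\Ac^p_K(L_1,\ldots,L_m)$) and satisfy $R_{j,p}\to T_j$ weakly has full $\sigma_\infty$-measure, hence is nonempty. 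For each $j$ pick one such sequence, obtaining currents $R_{j,p}$ with $R_{j,p}\to T_j$ as $p\to\infty$.

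For the second step, I will show $T_j\to T$ weakly on $X$. Locally, the weight of $h_{k,j}$ is $\phi_{k,j}=\tfrac{1}{1+\epsilon_j}\phi_k+\tfrac{\epsilon_j}{1+\epsilon_j}\psi_k$, where $\phi_k,\psi_k$ are local weights of $h_k,g_k$. By (i), the $\phi_{k,j}$ are locally bounded on $X\setminus\Sigma_k$ uniformly in $j$ and converge uniformly on compact subsets of $X\setminus\Sigma_k$ to $\phi_k$. Bedford--Taylor continuity of the wedge product under $L^1_{\loc}$-convergence of uniformly locally bounded plurisubharmonic functions then yields weak convergence $T_j\to T$ on $X\setminus\bigcup_k\Sigma_k$. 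To extend across the exceptional sets, observe that all $T_j$ and $T$ represent the same cohomology class $c_1(L_1)\smile\cdots\smile c_1(L_m)$ and hence share the same mass against $\omega^{n-m}$. Under hypothesis (iii), intersections of positive closed currents with locally bounded potentials in general position carry no mass on proper analytic subsets (see \cite{CMN15}), so any weak subsequential limit of $\{T_j\}$ equals $T$ on $X\setminus\bigcup\Sigma_k$ and has mass equal to that of $T$; by weak compactness this forces $T_j\to T$ on all of $X$.

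The third step is immediate: Lemma \ref{L:metric}(ii) applied to the array $R_{j,p}\to T_j$ (as $p\to\infty$) and $T_j\to T$ (as $j\to\infty$) produces $p_j\nearrow\infty$ with $R_{j,p_j}\to T$ weakly, so $R_j:=R_{j,p_j}\in\Ac^{p_j}(L_1,\ldots,L_m)$ (resp.\ $\Ac^{p_j}_K(L_1,\ldots,L_m)$) works. The main obstacle is Step 2: the local continuity on $X\setminus\bigcup\Sigma_k$ is classical, but globalising it requires ruling out mass escape onto the $\Sigma_k$, which is precisely where the general position hypothesis (iii) and the intersection theory of \cite{CMN15} enter in an essential way.
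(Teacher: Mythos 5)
Your Steps 1 and 3 coincide with the paper's argument: hypothesis (iv) yields, for each $j$, currents $T_{jp}\in\Ac^{p}(L_1,\ldots,L_m)$ (resp.\ $\Ac^{p}_K(L_1,\ldots,L_m)$) with $T_{jp}\to T_j$ as $p\to\infty$, and the conclusion follows by the diagonal extraction of Lemma \ref{L:metric}(ii). The gap is in your Step 2, the proof that $T_j\to T$. The assertion you lean on --- that under (iii) these wedge products ``carry no mass on proper analytic subsets'' --- is false in this setting and is not what \cite{CMN15} gives: the metrics are only locally bounded \emph{outside} $\Sigma_k$, so already for $m=1$ the current $c_1(L_1,h_1)$ can be a multiple of the integration current along $\Sigma_1$ (take a weight of the form $\log|s_{\Sigma_1}|$), and in general both $T$ and the $T_j$ may charge $\bigcup_k\Sigma_k$. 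Without that assertion, what your argument actually yields is: any subsequential weak limit $S$ of $(T_j)$ agrees with $T$ on $X\setminus\bigcup_k\Sigma_k$ and satisfies $\int_X S\wedge\omega^{n-m}=\int_X T\wedge\omega^{n-m}$. This does not force $S=T$: the difference $S-T$ is a closed current of order zero supported on $\bigcup_k\Sigma_k$, whose components may have dimension $\geq n-m$ (up to $n-1$), a range in which the support theorem says nothing; equality of the total masses on the exceptional set does not rule out a redistribution of mass among its components. (A lesser inaccuracy: Bedford--Taylor continuity needs locally uniform, not merely $L^1_{\loc}$, convergence of the bounded potentials; your potentials do converge locally uniformly off $\bigcup_k\Sigma_k$, so that part is repairable --- the globalization is the real problem.)

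The paper proves $T_j\to T$ without ever discussing mass near the $\Sigma_k$, by multilinearity: since $c_1\big(L_k,h_k^{1/(1+\epsilon_j)}g_k^{\epsilon_j/(1+\epsilon_j)}\big)=\frac{1}{1+\epsilon_j}\big(c_1(L_k,h_k)+\epsilon_j\,c_1(L_k,g_k)\big)$, one expands
$$T_j=\Big(\tfrac{1}{1+\epsilon_j}\Big)^m\,T\;+\;\sum_{J\subsetneq\{1,\ldots,m\}}\frac{\epsilon_j^{\,m-|J|}}{(1+\epsilon_j)^m}\,\bigwedge_{k\in J}c_1(L_k,h_k)\wedge\bigwedge_{k\in J'}c_1(L_k,g_k),$$
where, by (i), (iii) and \cite[Corollary 2.11, Proposition 2.12]{D93} (or \cite[Theorem 3.5]{FS95}), each mixed term is a single well-defined positive closed current of finite mass, independent of $j$, multiplied by a scalar tending to $0$; hence $T_j\to T$, even in mass norm. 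Substituting this expansion for your Step 2 completes the proof. Note also that the expansion gives $T_j\geq(1+\epsilon_j)^{-m}T$, so every weak limit dominates $T$; combined with the equality of masses you already established, this is another way to close the gap.
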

\begin{proof}
We only  give the proof  when  the space $\Omega$ is considered in (iv).
The   case  of $\Omega_K$  is quite   similar, and  hence  it is  left to  the interested reader.

For  each  $1\leq k\leq m$ and $j\in\N^*,$  observe that the metric 
$h_k^{1\over 1+\epsilon_j}g_k^{\epsilon_j\over 1+\epsilon_j}$
is  locally bounded  outside $\Sigma_k$ by (i) and  that
$$
c_1(L_k, h_k^{1\over 1+\epsilon_j}g_k^{\epsilon_j\over 1+\epsilon_j} )
={ 1\over 1+\epsilon_j}\big ( c_1(L_k, h_k)+\epsilon_j\cdot c_1(L_k, g_k)\big)\geq 0
$$
by (ii).
Consequently, using  (iii) and  applying  \cite[Corollary 2.11,
Proposition 2.12]{D93} or \cite[Theorem 3.5]{FS95}, we   get that
\begin{eqnarray*}
T_j&:=& c_1(L_1,h_1^{1\over 1+\epsilon_j}g_1^{\epsilon_j \over 1+\epsilon_j})
\wedge\ldots\wedge c_1(L_m,h_m^{1\over 1+\epsilon_j}g_m^{\epsilon_j\over 1+\epsilon_j})\\
&=&
\big({ 1\over 1+\epsilon_j}\big)^m c_1(L_1,h_1)\wedge\ldots\wedge c_1(L_m,h_m)
 + \sum {\epsilon_j^{m-|J|}\over (1+\epsilon_j)^m}
 \bigwedge_{k\in J} c_1(L_k,h_k)\bigwedge_{k\in J'} c_1(L_k,g_k),
\end{eqnarray*}
the last sum being taken over all subsets $J\subsetneq\{1,\ldots,m\}$ with 
cardinal $|J|$ and $J':= \{1,\ldots,m\}\setminus J.$
Since for such a   subset $J,$  the current 
$\bigwedge_{k\in J}c_1(L_k,h_k)\bigwedge_{k\in J'} c_1(L_k,g_k)$ is a well-defined positive closed  current 
and   $\epsilon_j^{m-|J|}\searrow 0$ as $j\nearrow\infty,$
it follows that 
 \begin{equation*}
 c_1(L_1,h_1^{ 1\over 1+\epsilon_j}g_1^{\epsilon_j\over 1+\epsilon_j})\wedge\ldots\wedge
 c_1(L_m,h_m^{ 1\over 1+\epsilon_j}g_m^{\epsilon_j\over 1+\epsilon_j})\to  
 c_1(L_1,h_1)\wedge\ldots\wedge c_1(L_m,h_m)=:T\quad\text{as $j\to\infty.$}
\end{equation*}
In other words, $T_j\to T$ as $j\to\infty$.

On the  other  hand, by (iv) for each $j\in\N^*$  and each $p\in\N^*$
there is  a current $T_{jp}\in \Ac^p (L_1,\ldots,L_m)$ such that
$ T_{jp}\to T_j$ as $p\to\infty.$ Applying  Lemma \ref{L:metric} (ii) 
to the  above  family  of currents $T,$ $(T_j)_{j=1}^\infty$ and $(T_{jp})_{j,p=1}^\infty,$
we can find a subsequence  $(p_j)_{j=1}^\infty\subset\N^*\nearrow \infty$ such  that   
$$T_{j p_j}\to c_1(L_1,h_1)\wedge\ldots\wedge c_1(L_m,h_m)\quad\text{  as}\ j\to\infty. $$
Since  $R_j:=T_{j p_j}\in \Ac^{p_j} (L_1,\ldots,L_m)$  and $p_j\nearrow\infty$ as $j\nearrow\infty,$ 
the proof  is complete.
\end{proof}

To illustrate  the usefulness of  Proposition
\ref{P:equi_approx}, we give,  in the  remainder of the section, the proof of
Theorem \ref{T:main1} and Theorem \ref{T:main2}, 
Theorem \ref{T:main2bis} modulo  Theorem \ref{T:main3} and Theorem \ref{T:main3bis}.
The following  equidistribution result is needed. 

\begin{theorem}\label{T:main5}
Let $(X,\omega)$ be a compact K\"ahler manifold of dimension $n$ and 
$(L_k,h_{k})$, $1\leq k\leq m\leq n$, be $m$ singular Hermitian holomorphic line 
bundles on $X$ such that: 
\begin{itemize}
\item[(i)] $h_{k}$ is locally bounded outside a proper analytic subset $\Sigma_{k}\subset X;$ 
\item[(ii)] $c_1(L_k,h_{k})\geq\epsilon\omega$ on $X$ for some $\epsilon>0;$  
\item[(iii)]
$\Sigma_1,\ldots,\Sigma_m$ 
are in general position. 
\end{itemize}
Then $\Omega(h_1,\ldots,h_m)$ equidistributes towards 
 the current $ c_1(L_1,h_1)\wedge\ldots\wedge c_1(L_m,h_m).$ 
\end{theorem}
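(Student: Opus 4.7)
I would proceed in three stages, following the scheme announced for Sections \ref{S:Bka}--\ref{S:main3} of the paper. Fix $1\leq k\leq m$. Since $c_1(L_k,h_k)\geq\varepsilon\omega$, the bundle $L_k$ is big, and by Bonavero's singular holomorphic Morse inequalities the Bergman space dimension satisfies $d_{k,p}:=\dim H^0_{(2)}(X,L_k^p)\sim c_kp^n$. Let $P_{k,p}$ denote the Bergman kernel function on the diagonal. The first and essential estimate is the pointwise asymptotic
$$\frac{1}{2p}\log P_{k,p}\longrightarrow 0\quad\text{in }L^1(X,\omega^n),$$
with locally uniform convergence on $X\setminus\Sigma_k$. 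The upper bound is immediate from the submean value property for holomorphic sections. The lower bound, which is the crucial input, comes from Demailly's singular $L^2$-estimates for $\dbar$: for any point $x\in X\setminus\Sigma_k$ one constructs peak sections in $H^0_{(2)}(X,L_k^p)$ of controlled $L^2$-norm concentrated near $x$, giving $\frac{1}{2p}\log P_{k,p}(x)\geq -C/p$.

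\textbf{Fubini-Study currents and single-divisor equidistribution.} Stage~2 introduces the Fubini-Study currents
$$\gamma_{k,p}:=p\cdot c_1(L_k,h_k)+\tfrac{1}{2}\ddc\log P_{k,p},$$
which are the pullbacks of Fubini-Study forms under the Kodaira maps $\Phi_{k,p}:X\dashrightarrow\P H^0_{(2)}(X,L_k^p)^*$. Stage~1 immediately gives $\tfrac{1}{p}\gamma_{k,p}\to c_1(L_k,h_k)$ weakly. In Stage~3 I would apply the Dinh-Sibony equidistribution theorem for meromorphic transforms \cite{DS06} to each $\Phi_{k,p}$: this produces, for each $p$, an exceptional set $E_{k,p}\subset\P H^0_{(2)}(X,L_k^p)$ with $\sigma_{k,p}(E_{k,p})\leq p^{-2}$ such that outside $E_{k,p}$ we have $\dist\bigl(\tfrac{1}{p}[s=0],\tfrac{1}{p}\gamma_{k,p}\bigr)\leq C(\log p)/p$. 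A Borel-Cantelli argument on the product space $(\Omega,\sigma_\infty)$ then yields, for $\sigma_\infty$-a.e.\ sequence $\{\bfs_p\}$ and every $k$, the individual convergence $\tfrac{1}{p}[s_{pk}=0]\to c_1(L_k,h_k)$ as $p\to\infty$.

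\textbf{Main obstacle: the wedge product.} The heart of the proof is promoting single-divisor convergence to convergence of the $m$-fold intersection, and this is where I expect most of the work to lie. Two issues must be handled. First, for $\sigma_p$-a.e.\ $m$-tuple the zero hypersurfaces $\{s_{pk}=0\}$ must be in general position, so that the wedge product is well-defined and equals the integration current on the intersection as in \cite{D93,FS95}. This is a Bertini-style statement: using the peak sections constructed in Stage~1 along the strata $\Sigma_{i_1}\cap\cdots\cap\Sigma_{i_\ell}$ and invoking hypothesis (iii), one can show that the locus of $m$-tuples failing general position has $\sigma_p$-measure decaying faster than any polynomial in $1/p$. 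Second, one must establish
$$\frac{1}{p^m}[s_{p1}=0]\wedge\cdots\wedge[s_{pm}=0]\longrightarrow c_1(L_1,h_1)\wedge\cdots\wedge c_1(L_m,h_m).$$
Writing $\tfrac{1}{p}[s_{pk}=0]=\tfrac{1}{p}\gamma_{k,p}+\ddc u_{k,p}$ with $u_{k,p}=\tfrac{1}{2p}\log\bigl(|s_{pk}|^2_{h_{k,p}}/P_{k,p}\bigr)$, the remaining work consists of Chern-Levine-Nirenberg type mass estimates on the divisors, combined with the continuity of the wedge product of positive closed currents whose potentials are locally bounded outside analytic sets in general position, a machinery developed in \cite{CMN15}. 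The main technical obstacle is the uniform $L^1$-control of $u_{k,p}$ on shrinking tubular neighborhoods of the strata $\Sigma_{i_1}\cap\cdots\cap\Sigma_{i_\ell}$; this is precisely where the general-position hypothesis (iii) enters essentially, allowing a successive slicing argument that reduces the convergence of the wedge product to $m$ applications of the single-divisor convergence together with a sup-norm estimate on the intermediate potentials.
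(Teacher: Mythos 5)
Your overall architecture (Bergman kernel asymptotics via Demailly's $L^2$-estimates, Fubini--Study currents, Dinh--Sibony's theory of meromorphic transforms \cite{DS06}, a Bertini-type general position statement) is the one the paper relies on --- indeed the paper proves Theorem \ref{T:main5} simply by citing \cite[Theorem 1.2]{CMN15} and checking that that proof survives when ``continuous outside $\Sigma_k$'' is weakened to ``locally bounded outside $\Sigma_k$''. However, your proposal has a genuine gap at the decisive step. You apply Dinh--Sibony to each Kodaira map $\Phi_{k,p}$ separately, obtain a.e.\ single-divisor convergence $\frac1p[s_{pk}=0]\to c_1(L_k,h_k)$, and then claim to promote this to convergence of the $m$-fold wedge by slicing, Chern--Levine--Nirenberg estimates and ``a sup-norm estimate on the intermediate potentials''. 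This reduction does not work: weak convergence of the factors never implies convergence of their wedge products, and the machinery for wedging currents whose potentials are locally bounded outside analytic sets in general position (\cite{D93,FS95}, as used in \cite{CMN15}) does not apply to the divisors $[s_{pk}=0]$ themselves, whose local potentials $\frac1p\log|s_{pk}|_{h_{k,p}}$ are $-\infty$ along moving hypersurfaces that eventually accumulate everywhere; in particular no sup-norm (or locally uniform) control of your functions $u_{k,p}$ is available on any open set in the limit. The actual argument (\cite[Theorem 4.2]{CMN15}, and Theorem \ref{T:speed} here in the adjoint setting) applies Dinh--Sibony directly to the codimension-$m$ intersections: it works on the product space ${\mathbb X}_{p}$ and produces exceptional sets $E_p\subset{\mathbb X}_{p}$ of small $\sigma_p$-measure outside of which $\frac{1}{p^m}\langle[\bfs_p=0]-\gamma_{1,p}\wedge\cdots\wedge\gamma_{m,p},\phi\rangle$ is estimated against $\|\phi\|_{\Cc^2}$; what is then proved separately is $\frac{1}{p^m}\gamma_{1,p}\wedge\cdots\wedge\gamma_{m,p}\to c_1(L_1,h_1)\wedge\cdots\wedge c_1(L_m,h_m)$, which is legitimate precisely because the Fubini--Study potentials, unlike those of the random divisors, are controlled outside the fixed sets $\Sigma_1,\ldots,\Sigma_m$ in general position. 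To salvage your route you would have to exploit the independence of the $m$ factors and estimate the integrals of $u_{k,p}$ against trace measures of the partial intersections for sections outside suitable exceptional sets --- which amounts to reproving the Dinh--Sibony estimate, not to a consequence of single-divisor equidistribution.

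A secondary inaccuracy: under hypothesis (i) the metrics are only locally bounded outside $\Sigma_k$, so your claim that $\frac1p\log P_{k,p}\to0$ \emph{locally uniformly} on $X\setminus\Sigma_k$ is unjustified (the upper bound involves $\max_{B(z,r)}\psi_\alpha-\psi_\alpha(z)$, and making it small requires continuity of the weight, which is exactly why Theorem \ref{T:Bka} assumes continuity). What does hold under (i), and what the paper's proof actually invokes from \cite[Theorem 5.1]{CM11} and \cite[eq.\ (14)]{CMN15}, is convergence of $\frac1p\log P_{k,p}$ to $0$ in $L^1(X,\omega^n)$ together with a uniform lower bound for $P_{k,p}$ on compact subsets of $X\setminus\Sigma_k$; this weaker input suffices, but your later steps should be phrased so as not to rely on locally uniform convergence of the Bergman kernels.
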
 
\begin{proof}
When  (i) is  replaced by the following stronger  condition: 
``$h_k$ is    continuous   outside   $\Sigma_k$", the theorem 
was proved in our previous  work  \cite[Theorem 1.2]{CMN15}. 
A careful verification shows that  our proof  still works  
assuming the weaker condition (i). 
Indeed, let $P_{k,p}$ be the Bergman kernel function of $(L_k^p,h_{k,p})$ 
(see  \cite [eq.\ (4)]{CMN15}). Then the fact that 
${1\over p} \log P_{k,p}\to 0$ in $L^1(X,\omega^n)$ as $p\to\infty$, 
as well as the estimate \cite[eq.\ (14)]{CMN15} hold under the more general 
assumption (i) (see also \cite[Theorem 5.1]{CM11}). 
Moreover, \cite[Proposition 4.7]{CMN15} 
holds with the same proof for metrics that are locally bounded outside an analytic subset of $X$. \end{proof}

\comment{what we really need   in the proof of \cite [Theorem 1.2]{CMN15} is  the  estimate
$$
{1\over p} \log P_{k,p}\to 0 \quad\text{in}\ L^1(X,\omega^n)\ \text{as}\ p\to\infty,
$$
where $P_{k,p}$ is the Bergman kernel function of $(L_k^p,h_{k,p})$ (see  \cite [eq (4)]{CMN15}).
However, this inequality has been proved in \cite[Theorem 5.1]{CM11}. \end{proof}}

\smallskip

Now  we arrive at the 

\begin{proof}[Proof of Theorem \ref{T:main1}] 
Fix  a   sequence $\epsilon_j\searrow 0$ as $j\nearrow\infty.$
By (ii) we get that
$$
c_1(L_k, h_k^{1\over 1+\epsilon_j}g_k^{\epsilon_j\over 1+\epsilon_j})={1\over 1+\epsilon_j}\big(c_1(L_k, h_k)+\epsilon_j\cdot c_1(L_k, g_k)\big)\geq {\epsilon_j\epsilon \over 1+\epsilon_j}\omega,\qquad 1\leq k\leq m.
$$
Therefore, applying   Theorem \ref{T:main5} to the singular  Hermitian holomorphic big line  
bundles $(L_k,   h_k^{1\over 1+\epsilon_j}g_k^{\epsilon_j\over 1+\epsilon_j} ),$  $1\leq k\leq m,$  we  infer that
$\Omega(h_1^{1\over 1+\epsilon_j}g_1^{\epsilon_j\over 1+\epsilon_j},\ldots,h_m^{1\over 1+\epsilon_j}g_m^{\epsilon_j\over 1+\epsilon_j})$ 
   equidistributes toward  
  the  current $ c_1(L_1,h_1^{1\over 1+\epsilon_j}g_1^{\epsilon_j\over 1+\epsilon_j})\wedge\ldots\wedge c_1(L_m,h_m^{1\over 1+\epsilon_j}g_m^{\epsilon_j\over 1+\epsilon_j})$ for all $j\in\N^*.$
Putting this together    with (i) and  (iii), we are in the position to apply
Proposition
\ref{P:equi_approx}, and hence  the  proof is  complete.
\end{proof}

\medskip

\begin{proof}[Proofs of Theorem \ref{T:main2} and Theorem \ref{T:main2bis}]
 Theorem \ref{T:main2} (resp.\  Theorem \ref{T:main2bis})
 will follow  from  Theorem \ref{T:main3}  (resp.\  Theorem \ref{T:main3bis}).
We  will only give  here the proof that  Theorem \ref{T:main3} implies  Theorem \ref{T:main2}.
The  other  implication  can be proved similarly.
Fix  a   sequence $\epsilon_j\searrow 0$ as $j\nearrow\infty.$
By (ii) we get that
$$
c_1(L_k, h_k^{1\over 1+\epsilon_j}g_k^{\epsilon_j\over 1+\epsilon_j})={1\over 1+\epsilon_j}\big(c_1(L_k, h_k)+\epsilon_j\cdot c_1(L_k, g_k)\big)
\geq {\epsilon_j\eta_k \over 1+\epsilon_j}\omega,\qquad 1\leq k\leq m.
$$
Therefore, applying   Theorem \ref{T:main3} to the singular  Hermitian holomorphic   line  
bundles $(L_k,   h_k^{1\over 1+\epsilon_j}g_k^{\epsilon_j\over 1+\epsilon_j} ),$  $1\leq k\leq 2,$  we  infer that
$\Omega_K(h_1^{1\over 1+\epsilon_j}g_1^{\epsilon_j\over 1+\epsilon_j},h_2^{1\over 1+\epsilon_j}g_2^{\epsilon_j\over 1+\epsilon_j})$ 
   equidistributes toward  
  the  current $ c_1(L_1,h_1^{1\over 1+\epsilon_j}g_1^{\epsilon_j\over 1+\epsilon_j})\wedge c_1(L_2,h_2^{1\over 1+\epsilon_j}g_2^{\epsilon_j\over 1+\epsilon_j})$ for all $j\in\N^*.$
Putting this together    with (i) and  (iii), we are in the position to apply
Proposition
\ref{P:equi_approx}, and hence  the  proof  of Theorem \ref{T:main2} is  complete.
\end{proof}




\section{Dimension growth of section spaces and Bergman kernel functions}\label{S:Bka}

\par In this section we prove a theorem about the dimension growth of section spaces and the asymptotic behavior of the Bergman 
kernel function  of adjoint line bundles. 

\par Let $(L,h)$ be a singular Hermitian  holomorphic line  bundle over a compact K\"ahler manifold $(X,\omega)$ 
of dimension $n.$ Consider the space 
$H^0_{(2)}(X,L^p\otimes K_X)$ of $L^2$-holomorphic sections of $L^p$ relative to the metric 
$h_p:=h^{\otimes p}$ induced by $h,$ $h^{K_X}$ on $K_X$ and the volume form  
$\omega^n$ on $X$, endowed with the natural inner product (see \eqref{e:ip}). 
Since $H^0_{(2)}(X,L^p\otimes K_X)$ is finite dimensional, 
let
\begin{equation}\label{e:dim}
d_p:=\dim H^0_{(2)}(X,L^p\otimes K_X)-1,
\end{equation} 
and when $d_p\geq 0$ let $\{S^p_j\}_{j=0}^{d_p}$ 
be an orthonormal basis. 
We denote by $P_p$ the Bergman kernel function defined by 
\begin{equation}\label{e:Bk}
P_p(x)=\sum_{j=0}^{d_p}|S^p_j(x)|_{h_p\otimes h^{K_X}}^2,\;\;|S^p_j(x)|_{h^p\otimes h^{K_X}}^2:=
\langle S_j^p(x),S_j^p(x)\rangle_{h_p\otimes h^{K_X}},\;x\in X.
\end{equation}
Note that this definition is independent of the choice of basis.

\begin{theorem}\label{T:Bka} 
Let $(X,\omega)$ be a compact K\"ahler manifold of dimension $n,$ $(L,h)$ be a singular 
Hermitian holomorphic line bundle on $X,$ and $\Sigma \subset X$ be a proper analytic subset such that:  
\begin{itemize}
\item[(i)] $h$ is continuous outside  $\Sigma;$ 
\item[(ii)] $c_1(L,h)\geq\eta\omega$ on $X,$  where  
 $\eta:\  X\to  [0,\infty)$ is a function such that  for every $x\in X\setminus \Sigma,$ there is a neighborhood
 $U_x$ of $x$ and a constant $c_x>0$ such that 
$\eta\geq  c_x$  on $U_x.$    
\end{itemize}
For every $p\geq 1,$ let  $d_p$ be given by \eqref{e:dim}
and   $P_p$ be the Bergman kernel function defined by \eqref{e:Bk} for the space 
 $H^0_{(2)}(X,L^p\otimes K_X).$
Then  
\begin{enumerate}
\item[1)]     
 $\lim_{p\to\infty} {1\over p} \log P_p(x)=0$ locally uniformly on 
$X\setminus \Sigma$.\\
\item[2)] There is  a  constant $c>1$ such that 
 $c^{-1}\leq d_p/p^n\leq c$ for all $p\geq1$. 
\end{enumerate}
\end{theorem}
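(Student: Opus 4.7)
The plan is to reduce both conclusions to a local analysis on coordinate balls around points $x_0\in X\setminus\Sigma$, where hypotheses (i) and (ii) supply a continuous plurisubharmonic local weight $\phi$ for $h$ and a constant $c_0=c_0(x_0)>0$ with $c_1(L,h)\ge c_0\omega$ on a fixed neighborhood of $x_0$. Both directions of 1) and the two bounds of 2) then follow from the standard $L^2$-toolkit (submean value on one side, H\"ormander/Demailly $L^2$-estimates for adjoint bundles on the other), adapted to the twist by $K_X$.

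\textbf{Upper bound in 1).} Write $S\in H^0_{(2)}(X,L^p\otimes K_X)$ locally as $S=\widetilde S\cdot e_L^{\otimes p}\otimes dz_1\wedge\cdots\wedge dz_n$ with $\widetilde S$ holomorphic. The holomorphic submean value inequality applied to $|\widetilde S|^2$ on a ball $B(x_0,r)$, together with the pointwise bound $e^{-2p\phi(x_0)}\le e^{2p\,\mathrm{osc}_{B(x_0,r)}\phi}\cdot e^{-2p\phi(y)}$ for $y\in B(x_0,r)$, yields $|S(x_0)|^2_{h_p\otimes h^{K_X}}\le C(r)\,e^{2p\,\mathrm{osc}_{B(x_0,r)}\phi}\|S\|_{L^2}^2$. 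Taking supremum over unit-norm $S$ and letting $r\to 0$, with uniformity for $x_0$ on compacta of $X\setminus\Sigma$ where $\phi$ is uniformly continuous, gives $\limsup\tfrac{1}{p}\log P_p\le 0$ locally uniformly on $X\setminus\Sigma$.

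\textbf{Lower bound in 1) and lower bound on $d_p$.} I would get both from a single construction. Fix $x_0$, local coordinates $z$ centered there, and a cutoff $\chi$ with $\chi\equiv 1$ near $x_0$ and support in a small ball contained in the neighborhood provided by (ii). For each multi-index $\alpha\in\N^n$ put $\tau_{p,\alpha}:=\chi\, z^\alpha\cdot e_L^{\otimes p}\otimes dz_1\wedge\cdots\wedge dz_n$, a smooth compactly supported $L^p\otimes K_X$-valued $(n,0)$-form, so that $\dbar\tau_{p,\alpha}$ is supported away from $x_0$. Solve $\dbar u_{p,\alpha}=\dbar\tau_{p,\alpha}$ with $L^2$-estimate using the twisted weight $2p\phi+2(n+|\alpha|)\log|z|+\rho$, where $\rho$ is a smooth global quasi-psh correction chosen so that the total weight is a genuine psh weight on $X$. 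The strict bound $c_1(L,h)\ge c_0\omega$ near $x_0$ absorbs the curvature of $\rho$ for $p$ large, and the $K_X$-twist brings the estimate into Demailly's form, which tolerates a merely semipositive singular base metric provided the auxiliary weight contributes strict positivity. The logarithmic pole then forces $u_{p,\alpha}$ to vanish at $x_0$ to order $|\alpha|$, so $S_{p,\alpha}:=\tau_{p,\alpha}-u_{p,\alpha}\in H^0_{(2)}(X,L^p\otimes K_X)$ has the prescribed $\alpha$-jet at $x_0$ with $L^2$-norm bounded by $C_1\,C_2^{|\alpha|}$. With $\alpha=0$ this gives $P_p(x_0)\ge c>0$ locally uniformly on $X\setminus\Sigma$, hence $\liminf\tfrac{1}{p}\log P_p\ge 0$, completing 1). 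Ranging $\alpha$ over $|\alpha|\le\delta p$ for a small $\delta>0$, the distinct $\alpha$-jets at $x_0$ make the $S_{p,\alpha}$ linearly independent, so $d_p+1\ge\binom{n+\lfloor\delta p\rfloor}{n}\gtrsim p^n$.

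\textbf{Upper bound on $d_p$ and main obstacle.} The upper bound $d_p+1\le\dim H^0(X,L^p\otimes K_X)\le Cp^n$ is a general dimension estimate for line bundles on a compact complex $n$-manifold (see e.g.\ \cite[Lemma 2.2.6]{MM07}, or tensor with an ample bundle to reduce to the ample case). The technical crux is the H\"ormander/Demailly step above: the global singular weight $p\phi$ has no regularity along $\Sigma$, and one needs uniform $L^2$-control as $x_0$ ranges over compacta of $X\setminus\Sigma$. The twist by $K_X$ is what makes the argument work under the weaker hypothesis (ii), because Demailly's $L^2$-estimate in the adjoint setting accepts a merely semipositive singular base metric provided strict curvature is supplied by the auxiliary weight; here this comes from the log pole together with the local bound $c_0$. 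The analogous statement under the stronger hypothesis $c_1(L,h)\ge\varepsilon\omega$ is \cite[Theorem 5.1]{CM11}, so the task is to verify that the same construction goes through when strict positivity is only local outside $\Sigma$.
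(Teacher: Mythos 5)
Your proposal is correct and follows essentially the same route as the paper: the submean-value inequality together with continuity of the local psh weight for the upper bound, Demailly/H\"ormander $L^2$-estimates with cut-off logarithmic weights localized at points of $X\setminus\Sigma$ (where hypothesis (ii) supplies uniform strict positivity to absorb the negative curvature of the auxiliary weight) to produce peak sections and prescribed jets, yielding both the lower bound for $P_p$ and the lower bound $d_p\gtrsim p^n$ via surjectivity of a jet map, and a Siegel-lemma argument for $d_p\lesssim p^n$. The only cosmetic slip is that the pole $2(n+|\alpha|)\log|z|$ forces $u_{p,\alpha}$ to vanish to order $|\alpha|+1$ (not $|\alpha|$), which is exactly what is needed for $S_{p,\alpha}$ to have the prescribed $\alpha$-jet.
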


In order to prove our theorems we need  the following variant of the existence
theorem for $\overline\partial$ in the case of singular Hermitian metrics.
The smooth case goes back to Andreotti-Vesentini and H\"ormander, while the singular case was first observed by Bombieri and Skoda and proved in generality by Demailly \cite[Theorem 5.1]{D82}.

\begin{theorem}[$L^2$-estimates for $\overline\partial$]\label{T:l2}
Let $(M,\omega)$ be a K\"ahler manifold of dimension $n$  which admits a   complete  
K\"ahler  metric.
Let $(L,h)$ be a singular Hermitian holomorphic line bundle  and let $\lambda:\ M\to [0,\infty)$
be  a  continuous  function
such that $c_1(L,h)\geq \lambda\omega.$  Then for any form $g\in L_{n,1}^2(M,L,\loc)$ satisfying  
$${\overline\partial}g=0,\qquad  \int_M\lambda^{-1}|g|_{\omega,h}^2\omega^n<\infty\,, $$
 then there exists $u\in L_{n,0}^2(M,L,\loc)$ with $\overline\partial u=g$ and  
$$\int_M|u|^2_{\omega,h} \omega^n\leq \int_M\lambda^{-1}|g|_{\omega,h}^2 \omega^n\,.$$
\end{theorem}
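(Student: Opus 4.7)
The plan is the classical Andreotti--Vesentini--H\"ormander--Demailly strategy: establish the estimate first for smooth Hermitian metrics by combining the Bochner--Kodaira--Nakano identity with a Hahn--Banach argument, then pass to the singular case by regularization of $h$.

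\textbf{Smooth case.} Assume first that $h$ is smooth. For any smooth compactly supported $L$-valued $(n,1)$-form $v$, the Bochner--Kodaira--Nakano identity on $(M,\omega)$ gives
\begin{equation*}
\|\overline\partial v\|^2_{\omega,h}+\|\overline\partial^* v\|^2_{\omega,h}\geq\int_M\langle [ic(L,h),\Lambda_\omega]v,v\rangle_{\omega,h}\,\omega^n.
\end{equation*}
A standard pointwise computation on $(n,1)$-forms reduces the curvature commutator to multiplication by the trace of $c(L,h)$ with respect to $\omega$; the hypothesis $c_1(L,h)\geq\lambda\omega$ then bounds the right-hand side below by $\int_M\lambda|v|^2_{\omega,h}\omega^n$. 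The existence of a complete K\"ahler metric on $M$ permits the classical cut-off construction (sequences $\chi_j\nearrow 1$ with $|d\chi_j|$ uniformly bounded against the complete metric) showing that compactly supported smooth forms are dense in the graph-norm domain of $\overline\partial\oplus\overline\partial^*$, so the estimate extends throughout that domain. For $g$ satisfying $\overline\partial g=0$ and $C_g:=\int_M\lambda^{-1}|g|^2_{\omega,h}\omega^n<\infty$, decompose any $v$ in the domain of $\overline\partial^*$ as $v=v_1+v_2$ with $v_1\in\ker\overline\partial$ and $v_2\in(\ker\overline\partial)^\perp\subset\ker\overline\partial^*$, and combine Cauchy--Schwarz with the above inequality:
\begin{equation*}
|\langle g,v\rangle|^2=|\langle g,v_1\rangle|^2\leq C_g\int_M\lambda|v_1|^2_{\omega,h}\omega^n\leq C_g\|\overline\partial^* v\|^2_{\omega,h}.
\end{equation*}
The Hahn--Banach/Riesz representation theorem then produces $u\in L^2_{n,0}(M,L)$ with $\|u\|^2\leq C_g$ and $\overline\partial u=g$ weakly.

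\textbf{Regularization and limit.} For the general singular $h$, write $h=h_0 e^{-2\phi}$ in local frames with $h_0$ smooth and $\phi$ plurisubharmonic, and apply Demailly's regularization of closed positive $(1,1)$-currents to obtain a decreasing family of smooth weights $\phi_\nu\downarrow\phi$ which globalize to smooth Hermitian metrics $h_\nu$ on $L$ satisfying $c_1(L,h_\nu)\geq(\lambda-\varepsilon_\nu)\omega$ for some $\varepsilon_\nu\searrow 0$. The smooth case applied to $h_\nu$ yields $u_\nu$ with $\overline\partial u_\nu=g$ and
\begin{equation*}
\int_M|u_\nu|^2_{\omega,h_\nu}\omega^n\leq\int_M(\lambda-\varepsilon_\nu)^{-1}|g|^2_{\omega,h_\nu}\omega^n\leq\int_M(\lambda-\varepsilon_\nu)^{-1}|g|^2_{\omega,h}\omega^n,
\end{equation*}
since $\phi_\nu\geq\phi$ forces $|\cdot|^2_{h_\nu}\leq|\cdot|^2_h$. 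For each fixed $\nu_0$ and $\nu\geq\nu_0$, $|u_\nu|^2_{h_{\nu_0}}\leq|u_\nu|^2_{h_\nu}$, so the sequence $\{u_\nu\}$ is bounded in $L^2_{\loc}$ against the smooth metric $h_{\nu_0}$; extract a weakly convergent subsequence $u_{\nu_k}\rightharpoonup u$. The equation $\overline\partial u=g$ passes to the distributional limit. Weak lower semi-continuity of the $L^2$-norm against $h_{\nu_0}$ combined with $(\lambda-\varepsilon_\nu)^{-1}\searrow\lambda^{-1}$ gives $\int|u|^2_{h_{\nu_0}}\omega^n\leq\int\lambda^{-1}|g|^2_{\omega,h}\omega^n$, and letting $\nu_0\to\infty$ the monotone convergence $|\cdot|^2_{h_{\nu_0}}\nearrow|\cdot|^2_h$ yields the desired bound $\int_M|u|^2_{\omega,h}\omega^n\leq\int_M\lambda^{-1}|g|^2_{\omega,h}\omega^n$.

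\textbf{Main obstacle.} The delicate step is the global regularization on the possibly non-compact manifold $M$: producing smooth psh weights $\phi_\nu\downarrow\phi$ with a controlled curvature loss $\varepsilon_\nu\to 0$ is nontrivial outside the Stein/compact setting and invokes Demailly's Bergman-kernel construction, where the completeness of the auxiliary K\"ahler metric reenters to solve $\overline\partial$ locally with estimates. A secondary nuisance is that $(\lambda-\varepsilon_\nu)^{-1}$ is meaningless on the locus $\{\lambda=0\}$; the standard remedy is to first establish the smooth estimate with $\lambda$ replaced by $\lambda+\delta$ for small $\delta>0$ and then let $\delta\searrow 0$ via monotone convergence, a harmless maneuver since the final bound involves only $\lambda^{-1}$.
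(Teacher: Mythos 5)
Your smooth-case argument is the standard Andreotti--Vesentini--H\"ormander scheme and is essentially sound (one slip: for $(n,1)$-forms the commutator term $\langle[i\Theta(L,h),\Lambda_\omega]v,v\rangle$ is bounded below by the \emph{smallest} eigenvalue of the curvature with respect to $\omega$ times $|v|^2$, not given by the trace; the bound $\int_M\lambda|v|^2_{\omega,h}\omega^n$ you need is nevertheless correct). Note also that the paper does not reprove this statement at all: its ``proof'' is the citation \cite[Corollary 4.2]{CM13}, which is a variant of Demailly's theorem \cite[Theorem 5.1]{D82}, so any comparison is really with the classical proof you are reconstructing.

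The genuine gap is in your reduction of the singular case to the smooth case. You postulate a decreasing family of \emph{smooth} metrics $h_\nu$ on $L$, defined on all of $M$, with $c_1(L,h_\nu)\geq(\lambda-\varepsilon_\nu)\omega$ and $\varepsilon_\nu\searrow0$. Such a family does not exist in general. Already for $M$ compact its existence would force $L$ to be nef, while the hypothesis only makes $L$ pseudo-effective: take $L=\mathcal{O}_X(E)$ on a surface with $E$ an irreducible curve of negative self-intersection and $h$ the singular metric with $c_1(L,h)=[E]\geq0$; since $L\cdot E=E^2<0$, no smooth metrics with curvature $\geq-\varepsilon_\nu\omega$, $\varepsilon_\nu\to0$, can exist. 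Demailly's regularization theorem does not rescue the step: on compact manifolds it yields either smooth approximants whose loss of positivity is governed by the Lelong numbers of the weight (hence does not tend to zero where these are positive) or approximants with analytic singularities, to which your smooth-case estimate does not directly apply; and on a general noncompact complete K\"ahler manifold there is no global regularization theorem of this kind at all. The issue is not vacuous for the paper's application, where the theorem is used for the metrics $h_pe^{-bp\varphi_z}$ whose weights have logarithmic poles at $z$. The classical proof in \cite{D82} avoids this entirely: the psh weight is regularized only \emph{locally}, by convolution on coordinate charts (or on Stein pieces, where decreasing smooth psh approximation is available), and the global solution is produced by an exhaustion and weak-limit argument in which the complete K\"ahler metric enters through the auxiliary metrics $\omega+\epsilon\hat\omega$ and the monotonicity of the $(n,1)$-norms in $\epsilon$. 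To repair your write-up you must replace the global approximation step by such a local-regularization/exhaustion argument (or simply cite \cite[Theorem 5.1]{D82}, as the paper does). A secondary, fixable point: where $\lambda-\varepsilon_\nu<0$ your smooth-case statement is not even applicable, so the $\lambda+\delta$ adjustment has to be incorporated into the approximation scheme itself rather than appended at the end.
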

\begin{proof}
See  \cite[Corollary 4.2]{CM13}.
\end{proof}
\medskip

\par\noindent{\em Proof of Theorem \ref{T:Bka}.}
 Following  the  arguments  of \cite{D92,CM11,CM13}  we will  first establish the  following    upper
 and lower estimates 
\eqref{e:Bke}--\eqref{e:Bke_bis} for ${\log P_p\over p}.$

To state the upper estimate \eqref{e:Bke},
let $x\in X $ and let $U_\alpha\subset X $ be a coordinate neighborhood of $x$ on which there exists a holomorphic frame $e_\alpha$ of $L$ and  $e'_\alpha$
of $K_X.$ Let $\psi_\alpha$ be a psh weight of $h$ and $\rho_\alpha$ be  a  smooth weight of $h^{K_X}$
 on $U_\alpha$. Fix $r_0>0$ so that the ball $V:=B(x,2r_0)\subset\subset U_\alpha$ and let $U:=B(x,r_0)$.
Then  \eqref{e:Bke} says that 
there exist constants $C>0$, $p_0\in\mathbb{N}$, so that  
\begin{equation}\label{e:Bke}
 {\log P_p(z)\over p} \leq\frac{\log(C r^{-2n})}{p}+2\left(\max_{B(z,r)}\psi_\alpha-\psi_\alpha(z)\right)
\end{equation}
holds for all $p>p_0$, $0<r<r_0$, and $z\in U$ with $\psi_\alpha(z)>-\infty$.  

The lower estimate \eqref{e:Bke_bis} says that for every $x\in  X\setminus \Sigma,$  there exist a constant $C=C_x,$ $p_0\in\mathbb{N}$ large  enough and an open neighborhood $U$ of $x$
such that 
\begin{equation}\label{e:Bke_bis}
-\frac{\log C}{p}\leq\frac{1}{p}\,\log P_p(z) 
\end{equation}
holds for all $p>p_0$ and $z\in U.$

\smallskip

\par  For  the upper estimate \eqref{e:Bke}, let $S\in H^0_{(2)}(X,L^p\otimes K_X)$ with $\|S\|=1$ and write $S=se_\alpha^{\otimes p}\otimes e'_\alpha$. Repeating an argument of Demailly we obtain, for   $0<r<r_0,$
\begin{eqnarray*}
|S(z)|^2_{h_p\otimes h^{K_X}}&=&|s(z)|^2e^{-2p\psi_\alpha(z)-2\rho_\alpha(z)}
\leq e^{-2p\psi_\alpha(z)-2\rho_\alpha(z)}\frac{C_1}{r^{2n}}\,\int_{B(z,r)}|s|^2\,\omega^n\\
&\leq&\frac{C_2}{r^{2n}}\,\exp\left(2p\left(\max_{B(z,r)}(\psi_\alpha+{\rho_\alpha\over p}) -\psi_\alpha(z)-{\rho_\alpha(z)\over p} \right)\right)\int_{B(z,r)}|s|^2e^{-2p\psi_\alpha-2\rho_\alpha}\,\omega^n\\
&\leq&\frac{C_3}{r^{2n}}\,\exp\left(2p\left(\max_{B(z,r)}\psi_\alpha-\psi_\alpha(z)\right)\right),
\end{eqnarray*}
where $C_1,C_2,C_3$ are constants that depend only on $x$. Hence  
$$\frac{1}{p}\,\log P_p(z)=\frac{1}{p}\,\max_{\|S\|=1}\log |S(z)|^2_{h_p\otimes h^{K_X}}\leq\frac{\log(C_3r^{-2n})}{p}+2\left(\max_{B(z,r)}\psi_\alpha-\psi_\alpha(z)\right).$$
Note that this estimate holds for all $p$ and it does not require the strict positivity of the current $c_1(L,h)$, nor the hypotheses that $X$ is compact or $\omega$ is a K\"ahler form. 
Covering $X$ by a finite  number  of such  open  set $U,$ the last estimate  implies \eqref{e:Bke}.

\smallskip

\par For the lower estimate  \eqref{e:Bke_bis}, 
let $x\in X\setminus \Sigma$ and $U_\alpha\Subset X\setminus \Sigma$ be a coordinate neighborhood of $x$ on which there exists a holomorphic frame $e_\alpha$ of $L$ and  $e'_\alpha$
of $K_X.$ Let $\psi_\alpha$ be a psh weight of $h$ and $\rho_\alpha$ be  a  smooth weight of $h^{K_X}$
 on $U_\alpha$. Fix $r_0>0$ so that the ball $V:=B(x,2r_0)\subset\subset U_\alpha$ and let $U:=B(x,r_0)$.
Next, we proceed as in \cite[Section 9]{D93b} and \cite[Theorem 4.2]{CM13} to show that there exist a constant $C=C_x>0$ and $p_0\in\mathbb{N}$ large  enough such that for all $p>p_0$ and all $z\in U$ (note that $\psi_\alpha>-\infty$ on $U$) there is a section $S_{z,p}\in H^0_{(2)}(X,L^p\otimes K_X)$ with $S_{z,p}(z)\neq0$ and 
$$\|S_{z,p}\|^2\leq C|S_{z,p}(z)|^2_{h_p\otimes h^{K_X}}\,.$$
Observe that this implies that 
$$\frac{1}{p}\,\log P_p(z)=\frac{1}{p}\,\max_{\|S\|=1}\log|S(z)|^2_{h_p\otimes h^{K_X}}\geq-\frac{\log C}{p}\,.$$

\par

 Now we   come  to the proof of Part 1).
 Observe that, by the  continuity of $\psi_\alpha$, putting \eqref{e:Bke} and \eqref{e:Bke_bis} together  implies that $\frac{1}{p}\,\log P_p\to0$ as $p\to\infty$ locally  uniformly on $V\setminus \Sigma$.  This proves Part 1).

\par Now we  turn to the proof of Part 2).
 Let $x\in X\setminus \Sigma$ and $U_\alpha\Subset X\setminus \Sigma$ be a coordinate neighborhood of $x$ on which there exists a holomorphic frame $e_\alpha$ of $L$ and  $e'_\alpha$
of $K_X.$ Let $\psi_\alpha$ be a psh weight of $h$ and $\rho_\alpha$ be  a  smooth weight of $h^{K_X}$
 on $U_\alpha$. Fix $r_0>0$ so that the ball $V:=B(x,2r_0)\subset\subset U_\alpha$ and let $U:=B(x,r_0)$.
Let $\theta\in\Cc^\infty(\mathbb R)$ be a cut-off function such that $0\leq\theta\leq1$, $\theta(t)=1$ for $|t|\leq\frac12$, $\theta(t)=0$ for $|t|\geq1$.
Define the quasi-psh function $\varphi_z$ on $X$ by
\begin{equation}\label{e:varphi_z}
\varphi_z(y)=\begin{cases}2\theta\big(\tfrac{|y-z|}{r_0}\big)\log\big(\frac{|y-z|}{r_0}\big)\,,\quad\text{for $y\in U_\alpha$}\,,\\
0,\quad\text{for $y\in X\setminus B(z,r_0)$}\,.
\end{cases}
\end{equation}
Note  the function $\varphi_z$ is psh, hence $\ddc\varphi_z\geq 0$, on $\{y:\ |y-z|\leq r_0/2\}.$
Since $V\Subset U_\alpha,$ it follows that there exists a  constant $c'>0$  such that for all
$z\in U$ we have  $\ddc \varphi_z\geq -c'\omega$ on $X$ and $\ddc \varphi_z=0$ outside $\overline{V}.$
 By assumption (ii), there is  a constant $c>0$  such that $c_1(L,h)\geq \eta\omega\geq  c\omega$ on  a neighborhood of $\overline{V}.$
Therefore,
 there exist $0<a,b<1$ and $p_0\in\N$  such that for all $p\geq p_0$ and all $z\in U$
\begin{eqnarray*}
c_1(L^p, h_pe^{-bp\varphi_z})&\geq& 0\,\qquad\text{on }X,\\
c_1(L^p, h_pe^{-bp\varphi_z})&\geq& ap\omega\quad\text{on  a neighborhood of }\overline{V}.
\end{eqnarray*}
Let $\lambda:\  X\to[0,\infty)$ be  a continuous function   such that $\lambda=ap$ on $\overline{V}$
and
$$
c_1(L^p, h_pe^{-bp\varphi_z})\geq\lambda \omega\qquad\text{on}\qquad X.
$$
 By identifying $V$  to an open ball in $\C^n,$
we may write $y=(y_1,\ldots,y_n)$ for $y\in V.$
Fix $\beta=(\beta_1,\ldots,\beta_n)\in\N^n$  with  $\sum_{j=1}^n\beta_j\leq [bp]-n.$ Let $v_{z,p,\beta}\in\Oc(V)$ be given by
\begin{equation}\label{e:v_z,p}
v_{z,p,\beta}(y):=(y_1-z_1)^{\beta_1}\cdots(y_n-z_n)^{\beta_n}\qquad\text{for}\qquad y\in V. 
\end{equation}
 Consider the form $$g_{z,p,\beta}
\in L^2_{n,1}(X,L^p),\;g_{z,p,\beta}:=\overline\partial\big(v_{z,p,\beta}\theta\big(\tfrac{|y-z|}{r_0}\big)e_\alpha^{\otimes p}\otimes e'_\alpha\big).$$
As $g_{z,p,\beta}=0$ outside $V,$ we get that
$$ \int_X{1\over\lambda}|g_{z,p,\beta}|^2_{h_p\otimes h^{K_X}}e^{-bp\varphi_z}\omega^n
=\int_V{1\over\lambda}|g_{z,p,\beta}|^2_{h_p\otimes h^{K_X}}e^{-bp\varphi_z}\omega^n
={1\over ap} \int_V |g_{z,p,\beta}|^2_{h_p\otimes h^{K_X}}e^{-bp\varphi_z}\omega^n\,.$$
Note  that  the  integral at the  right is  finite  since $\psi_\alpha$ is  bounded on $V$, $g_{z,p,\beta}=0$ on $B(z,r_0/2),$ and  $\varphi_z$ is  bounded on  $V\setminus B(z,r_0/2)$, so
\begin{eqnarray*}
\int_V|g_{z,p,\beta}|^2_{h_p\otimes h^{K_X}}e^{-bp\varphi_z}\omega^n&=&
\int_{V\setminus B(z,r_0/2)} |v_{z,p,\beta}|^2|\overline\partial\theta(\tfrac{|y-z|}{r_0})|^{2}e^{-2p\psi_\alpha}e^{-bp\varphi_z}
\omega^n\\
&\leq& C''_p\int_V|v_{z,p,\beta}|^2e^{-2p\psi_\alpha}\omega^n <\infty,
\end{eqnarray*}
where $C''_p>0$ is a constant that depends only on $x$ and $p.$

The hypotheses of Theorem \ref{T:l2} are satisfied for the  complete K\"ahler manifold
$(X,\omega)$, the semipositive  line bundle $(L^p,h_pe^{-bp\varphi_z})$ and the form
$g_{z,p,\beta},$ for all $p\geq p_0$ and 
  $z\in U$.  So by Theorem \ref{T:l2}, there exists $u_{z,p,\beta}\in L^2_{n,0}(X,L^p)$ such that $\overline\partial u_{z,p,\beta} =g_{z,p,\beta}$ and
\begin{equation}\label{e:est_u_z,p,beta}
\int_X|u_{z,p,\beta}|^2_{h_p\otimes h^{K_X}}e^{-bp\varphi_z}\omega^n\leq 
\int_X\frac{1}{\lambda}|g_{z,p,\beta}|^2_{h_p\otimes h^{K_X}}e^{-bp\varphi_z}\omega^n\leq {C''_p\over ap}
\int_V|v_{z,p,\beta}|^2e^{-2p\psi_\alpha}\omega^n.
\end{equation}
Define 
$$S_{z,p,\beta}:=v_{z,p,\beta}\theta\big(\tfrac{|y-z|}{r_0}\big)e_\alpha^{\otimes p}\otimes e'_\alpha-u_{z,p,\beta}.$$
Then 
$\overline\partial S_{z,p,\beta}=0$ and  $ S_{z,p,\beta}\in H^0_{(2)}(X,L^p\otimes K_X).$
Moreover, by \eqref{e:varphi_z} we get that
\begin{equation}\label{e:S_z,p,beta}
 S_{z,p,\beta}(y)=v_{z,p,\beta}(y)e_\alpha^{\otimes p}\otimes e'_\alpha -u_{z,p,\beta}(y)
\quad\text{for}\quad
y\in B(z,r_0/2).
\end{equation} 
Therefore, we  deduce  from this and \eqref{e:v_z,p} 
that
$\overline\partial u_{z,p,\beta}=\overline\partial S_{z,p,\beta}=0.$
Thus  $u_{z,p,\beta}$ is a $(n,0)$-holomorphic  form near $z.$

Let $\Jc$ be the sheaf of holomorphic  functions on $X$ vanishing at $z$  
and let $\bfm\subset \Oc_{X,z}$ the maximal ideal of the  ring of germs of holomorphic 
functions at $z.$
For  $k,p\in\N$ we have a  canonical residue map
$L^p\otimes K_X\to  L^p\otimes K_X\otimes (\Oc_X/\Jc^{k+1})$  which induces  in  cohomology
a map  which associates  to each global $L^2$-holomorphic   section of  $L^p\otimes K_X$
its  {\it $k$-jet} at $z:$
 $$
 J^k_p:\  H^0_{(2)}(X, L^p\otimes K_X)\to H^0\big(X, L^p\otimes K_X\otimes (\Oc_X/\Jc^{k+1})\big)
 =(L_z)^{\otimes p} \otimes (K_X)_z\otimes ( \Oc_{X,z}/\bfm^{k+1}).
 $$
 The right hand side  is  called  {\it  the space of $k$-jets of $L^2$-holomorphic sections of
$L^p\otimes K_X$ at $z.$}

Near $z$, $e^{-bp\varphi_z(y)}=r_0^{2bp}|y-z|^{-2bp}.$  
It is well-known (see \cite[p. 103]{MM07}) that for $\gamma=(\gamma_1,\ldots,\gamma_n)\in\N^n$
and $z=(z_1,\ldots,z_n)\in\C^n,$
the integral
$$
\int_{|y_1-z_1|<1,\ldots,|y_n-z_n|<1}|y_1-z_1|^{2\gamma_1}\cdots |y_n-z_n|^{2\gamma_n}|y-z|^{-2bp}\cdot i^n dy_1\wedge d\bar{y}_1\wedge \cdots\wedge dy_n\wedge d\bar{y}_n
$$
is  finite if  and only if $\sum_{j=1}^n\gamma_j\geq  [bp]-n+1.$
Putting  this  together  with \eqref{e:S_z,p,beta}, \eqref{e:v_z,p} and \eqref{e:est_u_z,p,beta} and the fact that
 $u_{z,p,\beta}$ is a $(n,0)$-holomorphic  form near $z,$ we see that the 
 $([bp]-n)$-jet of $S_{z,p,\beta}$  
coincides  with $v_{z,p,\beta}.$

Summarizing  what  has been done  so far, we have  shown that the map
$
 J^{[bp]-n}_p$ is   surjective.
 Hence, there is a constant $c>1$ such that for all $p$ sufficiently large 
 $$
 d_p=\dim H^0_{(2)}(X, L^p\otimes K_X)-1\geq  \dim  (\Oc_X/\Jc^{[bp]-n+1})-1={[bp]\choose [bp]-n}
 -1\geq  c^{-1}p^n.$$
 On the other hand, arguing as in the proof of  Siegel's lemma  \cite[Lemma 2.2.6]{MM07}, there is a constant $c>1$ such that $d_p\leq  cp^n$ for all $p\geq1$. This  completes the proof.
  $\qed$


\section{Convergence   towards intersection of Fubini-Study currents}\label{S:FS} 

\par In this section we show that the intersection of the Fubini-Study currents associated with line bundles as in Theorem \ref{T:main3} is well-defined. Moreover, we show that the sequence of wedge products of normalized Fubini-Study currents converges weakly to the wedge product of the curvature currents of $(L_k,h_k)$. We then prove that almost all zero-divisors of sections of large powers of these bundles are in general position. 

\par Let $V$ be a vector space of complex dimension $d+1$. If $V$ is endowed with a Hermitian metric, 
then we denote by $\omega_{_\FS}$ the induced Fubini-Study form on the projective space $\P(V)$ 
(see \cite[pp.\,65,\,212]{MM07}) normalized so that $\omega_{_{\FS}}^d$ is a probability measure. 
We also use the same notations for $\P(V^*)$.

\par We return to the setting of Theorem \ref{T:main3}. In fact, for the results of this section it suffices to assume that the metrics involved are only locally bounded. Namely, $(L_k,h_k)$, $1\leq k\leq m\leq n$, are singular Hermitian holomorphic line bundles on the compact K\"ahler manifold $(X,\omega)$ of dimension $n$, such that 
\begin{itemize}
\item[(i)] $h_{k}$ is {\em locally bounded} outside a proper analytic subset $\Sigma_{k}\subset X;$ 
\item[(ii)] $c_1(L_k,h_{k})\geq\eta_k\omega$ on $X,$  where  
 $\eta_k:\  X\to  [0,\infty)$ is a function such that  for every $x\in X\setminus \Sigma_k,$ there is a neighborhood
 $U_x$ of $x$ and a constant $c_x>0$ such that 
$\eta_k\geq  c_x$  on $U_x;$    
\item[(iii)]
 $\Sigma_1,\ldots,\Sigma_m$ 
are in general position. 
\end{itemize}
Consider the space $H^0_{(2)}(X,L_k^p\otimes K_X)$ of $L^2$-holomorphic sections of $L^p_k\otimes K_X$ endowed with the inner product \eqref{e:ip}.  Let 
\[
d_{k,p}:=\dim H^0_{(2)}(X,L_k^p)-1.
\]
By Part 2) of Theorem \ref{T:Bka}, there is a  constant $c>1$  such that 
\begin{equation}\label{e:dim_k}
 c^{-1}p^n\leq  d_{k,p} <cp^n .
\end{equation}
 The  {\it Kodaira map} associated  with $(L^p_k\otimes K_X,h_{k,p}\otimes h^{K_X})$  is  defined by
\begin{equation}\label{e:Kodaira_alg}
\Phi_{k,p}:\  X\dashrightarrow  {\mathbb G}(d_{k,p}, H^0_{(2)}(X,L_k^p\otimes K_X))\,,\:\:
\Phi_{k,p}(x):=\left\lbrace s\in H^0_{(2)}(X,L_k^p\otimes K_X) :\  s(x)=0  \right\rbrace,
\end{equation}
where  ${\mathbb G}(d_{k,p}, H^0_{(2)}(X,L_k^p\otimes K_X)) $ denotes the Grassmannian 
of hyperplanes in $H^0_{(2)}(X,L_k^p\otimes K_X)$  (see \cite[p.\,82]{MM07}). 
Let us identify ${\mathbb G}(d_{k,p}, H^0_{(2)}(X,L_k^p\otimes K_X))$ with $\P(H^0_{(2)}(X,L_k^p\otimes K_X)^*)$ by sending a hyperplane to an equivalence class of non-zero complex linear functionals  on $H^0_{(2)}(X,L_k^p\otimes K_X)$ having the hyperplane as their common kernel. By  composing $\Phi_{k,p}$ with this identification, we  obtain a meromorphic map  
\begin{equation}\label{e:Kodaira_alg_dual}
\Phi_{k,p}:\  X\dashrightarrow \P(H^0_{(2)}(X,L_k^p\otimes K_X)^*).
\end{equation}
To get an  analytic  description of $\Phi_{k,p},$
let 
\begin{equation}\label{e:basis}
S^{k,p}_j\in H^0_{(2)}(X,L_k^p\otimes K_X),\,j=0,\ldots,d_{k,p}\,,
\end{equation} 
be an orthonormal basis and denote by $P_{k,p}$ the Bergman kernel function of the space 
$H^0_{(2)}(X,L_k^p\otimes K_X)$ defined as in \eqref{e:Bk}.
This  basis gives identifications $H^0_{(2)}(X,L_k^p\otimes K_X)\simeq \C^{d_{k,p}+1}$ and 
$\P(H^0_{(2)}(X,L_k^p\otimes K_X)^*)\simeq \P^{d_{k,p}}$.
Let $U$ be a contractible Stein open set in $X$, let $e_k,\,e'$ be local holomorphic frames on $U$ for $L_k$, respectively $K_X$, and write $S_j^{k,p}=s_j^{k,p}e_k^{\otimes p}\otimes e',$ where  $s_j^{k,p}$ is a  holomorphic  function on $U.$
 By  composing $\Phi_{k,p}$ given in (\ref{e:Kodaira_alg_dual}) with the last  identification, 
 we  obtain a meromorphic map     $\Phi_{k,p}:X\dashrightarrow\P^{d_{k,p}}$   
which has the  following local expression
\begin{equation}\label{e:Kodaira}
\Phi_{k,p}(x)=[s_0^{k,p}(x):\ldots:s_{d_{k,p}}^{k,p}(x)]\,\text{ for }x\in U.
\end{equation}
It is  called {\it the Kodaira map defined by the basis $\{S^{k,p}_j\}_{j=0}^{d_{k,p}}$}.  

\par Next, we define the \emph{Fubini-Study currents} $\gamma_{k,p}$ of $H^0_{(2)}(X,L_k^p\otimes K_X)$ by 
\begin{equation}\label{e:FSdef}
\gamma_{k,p}|_U=\frac{1}{2}\,dd^c\log\sum_{j=0}^{d_{k,p}}|s_j^{k,p}|^2,\;   
\end{equation}
where the open set $U$ and the holomorphic functions $s_j^{k,p}$ are as  above. 
Note that $\gamma_{k,p}$ is a positive closed current of bidegree $(1,1)$ on $X$, 
and is independent of the choice of basis. 

Actually, the Fubini-Study currents are pullbacks of the Fubini-Study forms by Kodaira maps, which justifies their name. If $\omega_{_{\FS}}$ is the Fubini-Study form  on $\P^{d_{k,p}}$ then by \eqref{e:Kodaira} and \eqref{e:FSdef},
\begin{equation}\label{e:FSG}
\gamma_{k,p}=\Phi_{k,p}^*(\omega_{_{\FS}}),\quad 1\leq k\leq m.
\end{equation}  

Using \eqref{e:Bk} we introduce the psh function 
\begin{equation}\label{e:FSpot}
u_{k,p}:=\frac{1}{2p}\log\sum_{j=0}^{d_{k,p}}|s_j^{k,p}|^2=u_k+{\rho\over p} +\frac{1}{2p}\,\log P_{k,p}\,\;\text{ on }\,U\,,
\end{equation}
where   $u_k$ (resp.\ $\rho$) is the weight of the metric $h_k$ (resp.\ $h^{K_X}$) on $U$ corresponding to $e_k$ (resp.\ $e'$), i.\,e.\ $|e_k|_{h_k}=e^{-u_k}$, $|e'|_{h^{K_X}}=e^{-\rho}$.
Clearly, by \eqref{e:FSdef} and \eqref{e:FSpot},   $dd^c u_{k,p}=\frac{1}{p}\,\gamma_{k,p}$. 
Note that  ${\rho\over p} \to 0$ uniformly as $p\to\infty$ because
the metric $h^{K_X}$ is  smooth.   
Moreover, note that by \eqref{e:FSpot}, $\log P_{k,p}\in L^1(X,\omega^n)$ and 
\begin{equation}\label{e:FSB}
\frac{1}{p}\,\gamma_{k,p}=c_1(L_k,h_k)+{1\over p} c_1(K_X,h^{K_X})+\frac{1}{2p}\,dd^c\log P_{k,p}
\end{equation} 
as currents on $X$.  
For  $p\geq 1$ consider the following analytic subsets of $X$: 
$$\Sigma_{k,p}:=\left\lbrace x\in X:\,S^{k,p}_j(x)=0,\;0\leq j\leq d_{k,p}\right\rbrace,\;1\leq k\leq m\,.$$
Hence $\Sigma_{k,p}$ is the base locus of $H^0_{(2)}(X,L_k^p\otimes K_X)$, 
and $\Sigma_{k,p}\cap U=\{u_{k,p}=-\infty\}$. Note also that $\Sigma_k\cap U\supset\{u_k=-\infty\}.$
\begin{proposition}\label{P:FSwedge}
In the above hypotheses  we have the following:
\smallskip
\par
(i) For all $p$ sufficiently large and every $J\subset\{1,\ldots,m\}$ the analytic sets $\Sigma_{k,p}\,$, $k\in J$, $\Sigma_\ell\,$, $\ell\in J':=\{1,\ldots,m\}\setminus J$, are in general position. 
\smallskip
\par
(ii) If $p$ is sufficiently large then the currents 
$$\bigwedge_{k\in J}\gamma_{k,p}\;\wedge\,\bigwedge_{\ell\in J'}c_1(L_\ell,h_\ell)$$
are well defined on $X$, for every $J\subset\{1,\ldots,m\}$.
\end{proposition}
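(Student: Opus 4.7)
The plan is the following. Part (ii) will follow from part (i) via the Bedford--Taylor--Demailly theorem on wedge products of positive closed $(1,1)$-currents with analytic singularities (\cite[Cor.~2.11, Prop.~2.12]{D93}, \cite[Thm.~3.5]{FS95}): the unbounded locus of the local psh potential $u_{k,p}$ of $\gamma_{k,p}$ is exactly $\Sigma_{k,p}$, while the unbounded locus of the potential of $c_1(L_\ell,h_\ell)$ is contained in $\Sigma_\ell$, so the general position of the analytic sets $\{\Sigma_{k,p}\}_{k\in J}\cup\{\Sigma_\ell\}_{\ell\in J'}$ provided by (i) yields general position of the unbounded loci of the potentials and makes the wedge product well defined.

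To prove (i), I would argue by contradiction, combining a pigeonhole argument on irreducible components of fixed analytic sets with the $L^2$-section construction from the proof of Theorem~\ref{T:Bka}(2). Suppose that for some $J_1\subseteq J$, $J_2\subseteq J'$, and a subsequence $p_j\to\infty$, the intersection $A_{p_j}:=\bigcap_{k\in J_1}\Sigma_{k,p_j}\cap\bigcap_{\ell\in J_2}\Sigma_\ell$ has codimension strictly less than $|J_1|+|J_2|$; pick an irreducible component $W_j\subseteq A_{p_j}$ with $\dim W_j>n-|J_1|-|J_2|$. Since every $W_j$ lies in the fixed analytic set $\bigcap_{\ell\in J_2}\Sigma_\ell$, which has only finitely many irreducible components, pigeonhole (after extracting a subsequence) produces one irreducible component $V$ of $\bigcap_{\ell\in J_2}\Sigma_\ell$ containing all $W_j$. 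Hypothesis~(iii) together with the dimension bound on $W_j$ forces $V\not\subseteq\Sigma_k$ for at least one $k\in J_1$: otherwise $V\subseteq\bigcap_{j\in J_1\cup J_2}\Sigma_j$, which by (iii) has $\codim\geq|J_1|+|J_2|$, contradicting $\dim V\geq\dim W_j>n-|J_1|-|J_2|$. Fixing such a $k$ and any $z\in V\setminus\Sigma_k$, the $L^2$-construction from the proof of Theorem~\ref{T:Bka}(2), applied on a compact neighborhood of $z$ inside $X\setminus\Sigma_k$, produces for all $p\geq p_0(z)$ a holomorphic section of $L_k^p\otimes K_X$ not vanishing at $z$; hence $z\notin\Sigma_{k,p}$ and therefore $V\cap\Sigma_{k,p}\subsetneq V$ for $p$ large.

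To convert this codimension-one drop in $V$ into the required total codimension $|J_1|+|J_2|$ in $X$, one iterates the pigeonhole-plus-$L^2$ argument on the intermediate analytic subvarieties of $V$ obtained by successive intersections with the remaining $\Sigma_{k',p}$, $k'\in J_1\setminus\{k\}$. The main technical obstacle is that these intermediate varieties depend on $p$, so the finiteness-of-components step used in the pigeonhole argument must be combined with a careful induction on $|J_1|$ (and on codimension in $V$); the general position of the fixed analytic sets $\Sigma_1,\ldots,\Sigma_m$ restricted to $V$ provides the control needed at each inductive stage, while the $L^2$-construction from Theorem~\ref{T:Bka}(2) is what rules out the bad components in the intermediate varieties. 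This structure mirrors the proof of the analogous result \cite[Prop.~4.7]{CMN15}, adapted here to the adjoint bundle setting.
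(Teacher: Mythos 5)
Your part (ii) is exactly the paper's: general position plus \cite[Corollary 2.11]{D93} gives well-definedness. The gap is in part (i), precisely at the step you flag as a ``technical obstacle'' and then leave unresolved. Knowing that a single fixed point $z\in V\setminus\Sigma_k$ lies outside $\Sigma_{k,p}$ for $p\geq p_0(z)$ only shows $V\cap\Sigma_{k,p}\subsetneq V$; it gives no control on where the excess-dimensional components $W_j$ sit inside $V$, and these can drift as $p\to\infty$ without ever passing through $z$. Your proposed fix --- iterating the pigeonhole over components of the successive intersections with the remaining $\Sigma_{k',p}$ --- is exactly where the argument breaks: those intermediate sets depend on $p$, so they do not have a fixed finite list of irreducible components to pigeonhole over, and ``general position of the fixed $\Sigma_j$ restricted to $V$'' does not constrain the $p$-dependent components. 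As written, the induction does not close.

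The paper's proof of (i) replaces this by two stronger ingredients. First, the lower Bergman kernel estimate \eqref{e:Bke_bis} is used \emph{locally uniformly}: for every compact $K\subset X\setminus\Sigma_k$ one has $P_{k,p}\geq c_{k,K}>0$ for all large $p$, so via \eqref{e:FSpot} the whole base locus $\Sigma_{k,p}$ is contained in any prescribed $\epsilon$-neighborhood of $\Sigma_k$ once $p$ is large --- a genuinely stronger statement than non-vanishing at one point. Second, even with this, one must rule out components $Y_r$ of dimension $n-m+s$, $s\geq1$, squeezed into small neighborhoods of $\Sigma_1\cap\cdots\cap\Sigma_m$ (which has dimension $\leq n-m$); the paper does this by normalizing the integration currents $R_r=[Y_r]/|Y_r|$, extracting a weak limit of unit mass and bidimension $(n-m+s,n-m+s)$ supported in $\Sigma_1\cap\cdots\cap\Sigma_m$, and invoking Federer's support theorem (\cite{Fed69}, \cite[Theorem 1.7]{Har77}) to conclude the limit is zero, a contradiction. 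Your proposal contains neither the uniform kernel bound nor any substitute for this compactness/support-theorem step, so part (i) is not proved. (Minor point: the analogue in \cite{CMN15} you want is the general-position statement proved there, not \cite[Proposition 4.7]{CMN15}, which is the dimension-growth estimate.)
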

\begin{proof}  
 $(i)$ We show that for $p$ large enough, $\codim(\Sigma_{J,J',p})\geq m,$ where
$$ \Sigma_{J,J',p}:=\bigcap_{k\in J}\Sigma_{k,p}\cap\bigcap_{\ell\in J'}\Sigma_\ell .$$ The remaining assertions of $(i)$ are proved in a similar way. 
Assume for a contradiction that there exists a sequence $p_r\to\infty$ such that $\Sigma_{J,J',p_r}$ has an irreducible component $Y_r$ of dimension $n-m+s$ for some $s\geq 1$. Note that the estimate \eqref{e:Bke_bis} from the proof of Part 1) of Theorem \ref{T:Bka} holds in the case that the metric $h$ is locally bounded away from $\Sigma$. It implies that for every compact $K\subset X\setminus\Sigma_k$ there exist $c_{k,K}>0$ and $p_{k,K}\in{\mathbb N}$ such that $P_{k,p}\geq c_{k,K}$ holds on $K$ for $p\geq p_{k,K}$, where $1\leq k\leq m$. Using \eqref{e:FSpot} we infer that, given any $\epsilon$-neighborhood $V_{k,\epsilon}$ of $\Sigma_k$, $ \Sigma_{k,p_r}\subset V_{k,\epsilon}$ for all $r$ sufficiently large. Hence $Y_r\to\bigcap_{k\in J}\Sigma_k\cap\bigcap_{\ell\in J'}\Sigma_\ell=\Sigma_1\cap\cdots\cap\Sigma_m$ as $r\to\infty$. Let  $R_r=[Y_r]/|Y_r|$, where $[Y_r]$ denotes the current of integration on $Y_r$ and $|Y_r|=\int_{Y_r}\omega^{n-m+s}$. Since $R_r$ have unit mass, we may assume by passing to a subsequence that $R_r$ converges weakly to a positive closed current $R$ of bidimension $(n-m+s,n-m+s)$ and unit mass. But $R$ is supported in $\Sigma_1\cap\cdots\cap\Sigma_m $ which has dimension $\leq n-m$, so $R=0$ by the support theorem (\cite{Fed69}, see also \cite[Theorem 1.7]{Har77}), a contradiction. 

\par $(ii)$   Using $(i)$ and \cite[Corollary 2.11]{D93},  assertion $(ii)$ follows.
\end{proof}

\par The following version of Bertini's theorem is proved in \cite[Proposition 3.2]{CMN15}.

\begin{proposition}\label{P:Bertini} Let $L_k\longrightarrow X$, $1\leq k\leq m\leq n$, be holomorphic line bundles over a compact complex manifold $X$ of dimension $n$. Assume that: 
\smallskip
\par (i) $V_k$ is a vector subspace of $H^0(X,L_k)$ with basis $S_{k,0},\dots, S_{k,d_k}$, base locus $\Bs V_k:=\{S_{k,0}=\ldots=S_{k,d_k}=0\}\subset X$, such that $d_k\geq1$ and the analytic sets $\Bs V_1,\ldots,\Bs V_m$ are in general position.
\smallskip
\par (ii) $Z(t_k):=\{x\in X:\,\sum_{j=0}^{d_k}t_{k,j}S_{k,j}(x)=0\}$, where $t_k=[t_{k,0}:\ldots:t_{k,d_k}]\in\P^{d_k}$.
\smallskip
\par (iii) $\nu=\mu_1\times\ldots\times\mu_m$ is the product measure on $\P^{d_1}\times  \ldots\times\P^{d_m}$,  where $\mu_k$ is the Fubini-Study volume on $\P^{d_k}$. 
\smallskip
\par Then the analytic sets $Z(t_1),\ldots,Z(t_m)$ are in general position for $\nu$-a.e.\ $(t_1,\ldots,t_m)\in \P^{d_1}\times\ldots\times\P^{d_m}$. 
\end{proposition}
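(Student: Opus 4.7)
The plan is to reduce the statement to proving a codimension bound for each relevant intersection and then establish it by induction on $m$, with the inductive hypothesis strengthened to carry arbitrary auxiliary analytic sets along the argument.

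Since the condition ``$Z(t_1),\ldots,Z(t_m)$ are in general position'' amounts to finitely many inequalities $\codim\bigcap_{k\in I}Z(t_k)\geq|I|$ for subsets $I\subset\{1,\ldots,m\}$, and each such inequality concerns only the subfamily $\{V_k\}_{k\in I}$, it suffices to prove, for arbitrary $m$ and any family $V_1,\ldots,V_m$ satisfying the hypothesis, that $\codim(Z(t_1)\cap\cdots\cap Z(t_m))\geq m$ for $\nu$-a.e.\ $(t_1,\ldots,t_m)$. To make the induction go through I would in fact prove the following stronger assertion: if $A_1,\ldots,A_l\subset X$ are any analytic subsets such that $\Bs V_1,\ldots,\Bs V_m,A_1,\ldots,A_l$ are in general position, then for $\nu$-a.e.\ $(t_1,\ldots,t_m)$ the enlarged family $Z(t_1),\ldots,Z(t_m),A_1,\ldots,A_l$ is in general position. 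The proposition corresponds to $l=0$.

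For the base case $m=1$, fix $J\subset\{1,\ldots,l\}$ and set $A=\bigcap_{j\in J}A_j$. By general position of the $A_j$, each irreducible component $A'$ of $A$ has codimension $\geq|J|$. If $\codim A'>|J|$, then $\codim(Z(t_1)\cap A')\geq|J|+1$ automatically; if $\codim A'=|J|$, I need $A'\not\subset Z(t_1)$. The exceptional set $\{t_1\in\P^{d_1}:A'\subset Z(t_1)\}$ is a projective linear subspace of $\P^{d_1}$, and it is proper unless every section of $V_1$ vanishes on $A'$, i.e.\ $A'\subset\Bs V_1$. The latter is impossible since the general position of $\Bs V_1,A_1,\ldots,A_l$ forces $\dim(A\cap\Bs V_1)\leq n-|J|-1$, while $\dim A'=n-|J|$. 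A finite union of proper projective subspaces has $\mu_1$-measure zero, and taking the union over components of $A$ and over $J$ completes the base case. For the inductive step, assume the strengthened statement for $m-1$. Apply it to $V_1,\ldots,V_{m-1}$ with the enlarged auxiliary family $A_1,\ldots,A_l,\Bs V_m$ (which is still in general position with the remaining base loci by hypothesis), to obtain, for $(\mu_1\times\cdots\times\mu_{m-1})$-a.e.\ $(t_1,\ldots,t_{m-1})$, that $Z(t_1),\ldots,Z(t_{m-1}),A_1,\ldots,A_l,\Bs V_m$ are in general position. Fixing such a tuple, apply the base case to $V_m$ with auxiliary family $Z(t_1),\ldots,Z(t_{m-1}),A_1,\ldots,A_l$ to conclude that for $\mu_m$-a.e.\ $t_m$ the whole family $Z(t_1),\ldots,Z(t_m),A_1,\ldots,A_l$ is in general position. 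Fubini's theorem then promotes this to $\nu$-a.e.\ $(t_1,\ldots,t_m)$.

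The main obstacle is the possibility that some minimal-codimension component $A'$ of an intersection of auxiliary sets sits entirely in a base locus $\Bs V_k$: if so, $A'\subset Z(t_k)$ for every $t_k$ and the required strict codimension jump fails on a set of positive Fubini--Study measure. Hypothesis (i) on general position of the base loci is exactly what rules this out, but in order to invoke it at every step of the induction I must track the base loci $\Bs V_k$ of the yet-unchosen subspaces as auxiliary analytic sets; this is precisely the role played by the auxiliary family $A_1,\ldots,A_l$ in the strengthened inductive statement.
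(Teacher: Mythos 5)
Your proof is correct. Note that the paper itself does not prove this proposition but quotes it from \cite[Proposition 3.2]{CMN15}, and the argument there is essentially the same Bertini-type induction you give: the common key point is that for an irreducible analytic set $A'$ with $A'\not\subset\Bs V_k$, the set $\{t_k\in\P^{d_k}:A'\subset Z(t_k)\}$ is a proper projective linear subspace, hence of Fubini--Study measure zero, and Fubini's theorem assembles the slicewise statements; your strengthened inductive statement carrying auxiliary analytic sets $A_1,\ldots,A_l$ is a clean way to keep track of the yet-unchosen base loci and makes the induction self-contained. The only points left implicit are harmless: compactness of $X$ guarantees that each intersection of auxiliary sets has finitely many irreducible components (which you use), and the exceptional set to which Fubini is applied is measurable, since the tuples $(t_1,\ldots,t_m)$ for which a prescribed intersection has too large dimension form an analytic subset of $\P^{d_1}\times\cdots\times\P^{d_m}$ (upper semicontinuity of fiber dimension and the proper mapping theorem applied to the incidence variety $\{(x,t):\,x\in Z(t_{i_1})\cap\ldots\}$).
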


   If $\{S^{k,p}_j\}_{j=0}^{d_{k,p}}$ is an orthonormal basis of $ H^0_{(2)}(X,L_k^p\otimes K_X),$ we define the analytic hypersurface $Z(t_k)\subset X$, for $t_k=[t_{k,0}:\ldots:t_{k,d_{k,p}}]\in\P^{d_{k,p}}$, as in Proposition \ref{P:Bertini} $(ii)$. Let $\mu_{k,p}$ be the Fubini-Study volume on $\P^{d_{k,p}}$, $1\leq k\leq m$, $p\geq1$, and let $\mu_p=\mu_{1,p}\times\ldots\times\mu_{m,p}$ be the product measure on 
$\P^{d_{1,p}}\times\ldots\times\P^{d_{m,p}}$. Applying Proposition \ref{P:Bertini} we obtain:

\begin{proposition}\label{P:Bertini2}
In the above setting, if $p$ is sufficiently large then for $\mu_p$-a.e. $(t_1,\ldots,t_m)\in\P^{d_{1,p}}\times\ldots\times\P^{d_{m,p}}$ the analytic subsets $Z(t_1),\ldots,Z(t_m)\subset X$ are in general position, and $Z(t_{i_1})\cap\ldots\cap Z(t_{i_k})$ has pure dimension $n-k$ for each $1\leq k\leq m$, $1\leq i_1<\ldots<i_k\leq m$. 
\end{proposition}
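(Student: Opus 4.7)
The plan is to obtain Proposition \ref{P:Bertini2} as a direct application of Proposition \ref{P:Bertini} with the choices $V_k := H^0_{(2)}(X,L^p_k\otimes K_X)$ and the orthonormal bases $\{S^{k,p}_j\}_{j=0}^{d_{k,p}}$ introduced in \eqref{e:basis}, so I only need to check the two hypotheses of that proposition hold for $p$ large, and then upgrade general position to the pure-dimension statement.

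First, I would verify the dimension condition $d_{k,p}\geq 1$: this is immediate from Part 2) of Theorem \ref{T:Bka}, since $d_{k,p}\geq c^{-1}p^n\to\infty$. Next, I would identify the base locus $\Bs V_k$ appearing in Proposition \ref{P:Bertini} with the analytic set $\Sigma_{k,p}$ defined before Proposition \ref{P:FSwedge}; this is built into the definition. The needed general-position hypothesis on $\Bs V_1,\ldots,\Bs V_m$ is then exactly the $J=\{1,\ldots,m\}$, $J'=\emptyset$ case of Proposition \ref{P:FSwedge}(i), which holds for all $p$ large. Hence Proposition \ref{P:Bertini} applies and yields that, for $\mu_p$-a.e.\ $(t_1,\ldots,t_m)$, the hypersurfaces $Z(t_1),\ldots,Z(t_m)$ are in general position in $X$.

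It remains to promote ``general position'' to the pure-dimension assertion. For each $t_k\in\P^{d_{k,p}}$ the associated section $\sum_j t_{k,j}S^{k,p}_j$ is a nonzero global holomorphic section of $L^p_k\otimes K_X$, so $Z(t_k)$ is either empty or a hypersurface of pure dimension $n-1$ (its components are locally cut out by one non-trivial holomorphic equation). General position gives the lower bound $\codim\bigl(Z(t_{i_1})\cap\cdots\cap Z(t_{i_k})\bigr)\geq k$, while Krull's Hauptidealsatz, applied locally to the $k$ defining functions of the hypersurfaces $Z(t_{i_1}),\ldots,Z(t_{i_k})$, gives that every irreducible component of this intersection has codimension at most $k$. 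Combining the two bounds forces the codimension to be exactly $k$ on every component, i.e.\ pure dimension $n-k$.

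No serious obstacle is expected: all the work has been done in the earlier sections, and Proposition \ref{P:Bertini2} is essentially a packaging of Proposition \ref{P:Bertini} with the base-locus control from Proposition \ref{P:FSwedge}(i) and the dimension estimate from Theorem \ref{T:Bka}(2). The only mild subtlety is the Hauptidealsatz step used to pass from general position to pure dimensionality, which is standard for divisors on a complex manifold.
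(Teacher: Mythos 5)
Your first step is exactly the paper's: take $V_{k,p}=H^0_{(2)}(X,L_k^p\otimes K_X)$ with the orthonormal basis \eqref{e:basis}, note $\Bs V_{k,p}=\Sigma_{k,p}$, invoke Proposition \ref{P:FSwedge}(i) for the general position of the base loci for $p$ large, and apply Proposition \ref{P:Bertini} to get general position of $Z(t_1),\ldots,Z(t_m)$ for $\mu_p$-a.e.\ $(t_1,\ldots,t_m)$. The gap is in your second half. The claim that $Z(t_{i_1})\cap\ldots\cap Z(t_{i_k})$ has pure dimension $n-k$ includes in particular that this intersection (and already each $Z(t_k)$ itself) is \emph{nonempty}, and your argument never establishes this: you explicitly allow ``$Z(t_k)$ is either empty or a hypersurface'', and the combination ``general position gives $\codim\geq k$, Hauptidealsatz gives $\codim\leq k$ on each component'' is vacuous when there are no components. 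Without a positivity input nothing forbids emptiness (a nowhere-vanishing section of a topologically trivial piece would do), so this is not a cosmetic omission; it is where the hypothesis $c_1(L_k,h_k)\geq\eta_k\omega$ must enter, and it never does in your proposal.

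The paper closes precisely this point with a mass computation. Since $[Z(t_k)]$ lies in the cohomology class of $pc_1(L_k,h_k)+c_1(K_X,h^{K_X})$, and since by general position the current $[Z(t_{i_1})]\wedge\ldots\wedge[Z(t_{i_k})]$ is well defined (\cite[Corollary 2.11]{D93}) and supported in $Z(t_{i_1})\cap\ldots\cap Z(t_{i_k})$, one gets
\begin{align*}
\frac{1}{p^k}\int_X[Z(t_{i_1})]\wedge\ldots\wedge[Z(t_{i_k})]\wedge\omega^{n-k}
&=\int_X c_1(L_{i_1},h_{i_1})\wedge\ldots\wedge c_1(L_{i_k},h_{i_k})\wedge\omega^{n-k}+O(p^{-1})\\
&\geq\int_X\eta_{i_1}\cdots\eta_{i_k}\,\omega^n+O(p^{-1})>0
\end{align*}
for $p$ large, because $\eta_{i_1}\cdots\eta_{i_k}>0$ on the nonempty open set $X\setminus(\Sigma_{i_1}\cup\ldots\cup\Sigma_{i_k})$. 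Hence the intersection is nonempty, and only then does the ``$\codim\geq k$ together with $\codim\leq k$ on each component'' argument (which the paper uses implicitly as well) give pure dimension $n-k$. Adding this nonemptiness step makes your proof essentially coincide with the paper's.
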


\begin{proof}  Let $V_{k,p}:= H^0_{(2)}(X,L_k^p\otimes K_X)$, so $\Bs V_{k,p}=\Sigma_{k,p}$. By Proposition \ref{P:FSwedge} $(i),$ $\Sigma_{1,p},\ldots,\Sigma_{m,p}$ are in general position for all $p$ sufficiently large. We fix such $p$ and denote by $[Z(t_k)]$ the current of integration along the analytic hypersurface $Z(t_k)$; it has the same cohomology class as $pc_1(L_k,h_k)+c_1(K_X,h^{K_X})$. Proposition \ref{P:Bertini} shows that the analytic subsets $Z(t_1),\ldots,Z(t_m)$ are in general position for $\mu_p$-a.e. $(t_1,\ldots,t_m)\in\P^{d_{1,p}}\times\ldots\times\P^{d_{m,p}}$. Hence if $1\leq k\leq m$, $1\leq i_1<\ldots<i_k\leq m$, the current $[Z(t_{i_1})]\wedge\ldots\wedge[Z(t_{i_k})]$ is well defined by \cite[Corollary 2.11]{D93} and it is supported in $Z(t_{i_1})\cap\ldots\cap Z(t_{i_k})$. Since $c_1(L_k,h_k)\geq \eta_k\omega,$  it follows that
\begin{eqnarray*}
{1\over p^k}\int_X[Z(t_{i_1})]\wedge\ldots\wedge[Z(t_{i_k})]\wedge\omega^{n-k}&=&
\int_Xc_1(L_{i_1},h_{i_1})\wedge\ldots\wedge c_1(L_{i_k},h_{i_k})\wedge\omega^{n-k}+O(p^{-1})\\
&\geq& \int_X\eta_{i_1}\ldots \eta_{i_k}\omega^n +O(p^{-1})>0.
\end{eqnarray*}
So $Z(t_{i_1})\cap\ldots\cap Z(t_{i_k})\neq\emptyset$, hence it has pure dimension $n-k$. 
\end{proof}

The main result of this section is the following theorem.

\begin{theorem}\label{T:speed} We keep the hypotheses  (i), (ii), (iii) at the  beginning of the section
and use the notation introduced in \eqref{e:X_p}--\eqref{e:Omega}.
Then there exist a constant $\xi>0$ depending only on $m$ and a constant $c=c(X,L_1,h_1,\ldots,L_m,h_m)>0$ 
with the following property: For any sequence of positive numbers $\{\lambda_p\}_{p\geq1}$ with 
$$\liminf_{p\to\infty}\frac{\lambda_p}{\log p}>(1+\xi n)c,$$ 
there are subsets $E_{p}\subset {\mathbb X}_{K,p}$   such that 
\\[2pt]
(a) $\sigma_p(E_{p})\leq  c p^{\xi n} \exp(-\lambda_p/c)$ for all $p$ large enough;
\\[3pt]
(b) if $\bfs_p\in  {\mathbb X}_{K,p}\setminus E_{p}$ we have that the estimate 
$$\left|\frac{1}{p^m}\big\langle[\bfs_p=0]-\gamma_{1,p}\wedge\ldots\wedge\gamma_{m,p}\,,\phi\big\rangle\right|
\leq c\,\frac {\lambda_p}{p}\,\|\phi\|_{\Cc^2}$$
holds for every $(n-m,n-m)$ form $\phi$ of class $\Cc^2$. 

\par In particular, for $\sigma_\infty$-a.e. $\bfs\in \Omega_K$ the estimate from (b) holds for all $p$ sufficiently large.
\end{theorem}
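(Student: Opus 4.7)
The plan is to combine a telescoping identity with the Dinh--Sibony equidistribution theorem for meromorphic transforms \cite{DS06}, following the strategy of \cite{CMN15}. The Lelong--Poincar\'e formula applied to $s_{k,p}\in H^0_{(2)}(X,L_k^p\otimes K_X)$, combined with the definition \eqref{e:FSdef} of $\gamma_{k,p}$, gives $[s_{k,p}=0]-\gamma_{k,p}=dd^c u_{k,p,\bfs}$, where
\[
u_{k,p,\bfs}:=\tfrac12\log\bigl(|s_{k,p}|^2_{h_{k,p}\otimes h^{K_X}}/P_{k,p}\bigr)\leq 0
\]
is, up to composition with the Kodaira map $\Phi_{k,p}$ of \eqref{e:Kodaira_alg_dual}, the standard logarithmic potential $\log|\langle\cdot,a\rangle|$ on $\P^{d_{k,p}}$. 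Expanding
\begin{equation*}
[\bfs_p=0]-\gamma_{1,p}\wedge\cdots\wedge\gamma_{m,p}=\sum_{k=1}^m[s_{1,p}=0]\wedge\cdots\wedge[s_{k-1,p}=0]\wedge dd^c u_{k,p,\bfs}\wedge\gamma_{k+1,p}\wedge\cdots\wedge\gamma_{m,p},
\end{equation*}
and invoking Propositions \ref{P:FSwedge}(i) and \ref{P:Bertini2} to justify all intermediate wedge products for $\sigma_p$-a.e.\ $\bfs_p$ and all $p$ large, integration by parts against the smooth form $\phi$ rewrites the pairing as $\sum_{k=1}^m \int_X u_{k,p,\bfs}\,\Theta_{k,p,\bfs}\wedge dd^c\phi$, where
\[
\Theta_{k,p,\bfs}:=[s_{1,p}=0]\wedge\cdots\wedge[s_{k-1,p}=0]\wedge\gamma_{k+1,p}\wedge\cdots\wedge\gamma_{m,p}
\]
is a positive closed $(m-1,m-1)$ current of mass $O(p^{m-1})$, since its cohomology class is controlled by $c_1(L_j,h_j)$ via Theorem \ref{T:Bka}.

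The crux is a uniform large-deviation estimate for $u_{k,p,\bfs}$. Fixing the components $s_{j,p}$ with $j\neq k$ outside the bad sets for previous indices, $\Theta_{k,p,\bfs}$ becomes a specific positive closed current of mass $O(p^{m-1})$, and $a\mapsto u_{k,p,\bfs}$ is the pull-back by $\Phi_{k,p}$ of a universal logarithmic potential on $\P^{d_{k,p}}$. The Dinh--Sibony formalism applied to $\Phi_{k,p}\colon X\dashrightarrow\P^{d_{k,p}}$, whose essential ingredient is H\"ormander-type exponential integrability for plurisubharmonic functions, then yields
\[
\sigma_{k,p}\Bigl\{a:\frac{1}{p^{m-1}}\int_X|u_{k,p,\bfs}|\,\Theta_{k,p,\bfs}\wedge\omega^{n-m+1}>\lambda\Bigr\}\leq C\,d_{k,p}\,e^{-\lambda/C},
\]
with $C$ depending only on $X$, $L_1,h_1,\ldots,L_m,h_m$. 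Since $d_{k,p}=O(p^n)$ by Theorem \ref{T:Bka}(2), taking $E_p$ to be the union over $k$ of these exceptional events produces $\sigma_p(E_p)\leq cp^{\xi n}e^{-\lambda_p/c}$ for a suitable $\xi$ depending only on $m$. On the complement of $E_p$, summing the $m$ telescoping terms and using $\|dd^c\phi\|\leq C\|\phi\|_{\Cc^2}$ yields the bound (b).

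The almost-sure assertion is a consequence of Borel--Cantelli: the hypothesis $\liminf\lambda_p/\log p>(1+\xi n)c$ forces $\sum_p\sigma_p(E_p)<\infty$, so $\sigma_\infty$-a.e.\ $\bfs\in\Omega_K$ satisfies $\bfs_p\notin E_p$ for all $p$ sufficiently large. The main obstacle is making the Dinh--Sibony exponential estimate uniform in $p$ and over the varying currents $\Theta_{k,p,\bfs}$, which themselves depend on the other components of $\bfs_p$; the conditional argument above resolves this because the standard logarithmic potential $\log|\langle\cdot,a\rangle|$ has universal integrability in $a$ against any fixed positive current, while the mass of $\Theta_{k,p,\bfs}$ is controlled uniformly through its cohomology class. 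The essential quantitative inputs are Theorem \ref{T:Bka}(2), which supplies the polynomial dimension growth $d_{k,p}=O(p^n)$ that produces the prefactor $p^{\xi n}$, and Theorem \ref{T:Bka}(1), which guarantees $\tfrac{1}{p}\log P_{k,p}\to 0$ locally uniformly off $\Sigma_k$ and hence ensures that pull-backs of Fubini--Study measures stay comparable to $\omega^n$ on sets of full mass --- this is where the adjoint twist by $K_X$ does its work.
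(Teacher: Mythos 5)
Your proposal is correct and takes essentially the same route as the paper: the paper's own proof is a short reference to \cite[Theorem 4.2]{CMN15}, i.e.\ the Dinh--Sibony equidistribution machinery for meromorphic transforms \cite{DS06} combined with Propositions \ref{P:FSwedge} and \ref{P:Bertini2}, with the dimension estimate \eqref{e:dim_k} from Theorem \ref{T:Bka} supplying the polynomial prefactor $p^{\xi n}$ --- precisely the ingredients your telescoping plus conditional large-deviation argument assembles. The only small inaccuracy is your closing remark on Theorem \ref{T:Bka}, part 1): its role in this theorem is only indirect (it underlies Proposition \ref{P:FSwedge}), not to make push-forwards under the Kodaira maps comparable to the Fubini--Study volumes.
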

\begin{proof}
We follow the lines of the proof of \cite[Theorem 4.2]{CMN15}
making the necessary changes. In fact, we 
apply  Dinh-Sibony's equidistribution results for meromorphic  transforms \cite{DS06}
and Propositions \ref{P:FSwedge} and \ref{P:Bertini2}.
Here the main point is that the dimension estimate \eqref{e:dim_k} 
plays the role of \cite[Proposition 4.7]{CMN15}.
\end{proof}



\section{Equidistribution for sections of two  adjoint line bundles}
\label{S:main3}

\par The main purpose of this  section  is to prove  Theorem \ref{T:main3}. Let $\gamma_{k,p}$, $k=1,2$, be the Fubini-Study currents of the spaces $H^0_{(2)}(X,L_k^p\otimes K_X)$ as defined in \eqref{e:FSdef}.

\begin{theorem}\label{T:FSwedge} In the setting of Theorem \ref{T:main3} we have 
$$\frac{1}{p^2}\,\gamma_{1,p}\wedge\gamma_{2,p}\to c_1(L_1,h_1)\wedge c_1(L_2,h_2)\;\text{ as }\,p\to\infty\,,$$ 
in the weak sense of currents on $X$.
\end{theorem}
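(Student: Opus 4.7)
The plan is to work with local potentials of the Fubini--Study currents and apply a telescoping-plus-integration-by-parts argument. On a contractible coordinate patch $U$, formula \eqref{e:FSpot} expresses the local Fubini--Study potentials as
\[
u_{k,p}=u_k+\rho/p+(2p)^{-1}\log P_{k,p},
\]
where $u_k$ is the local weight of $h_k$ and $\rho$ is a smooth weight of $h^{K_X}$. Part 1) of Theorem \ref{T:Bka} gives $p^{-1}\log P_{k,p}\to 0$ locally uniformly on $X\setminus\Sigma_k$; combined with the uniform convergence $\rho/p\to 0$ and the pointwise upper bound \eqref{e:Bke}, this yields $u_{k,p}\to u_k$ locally uniformly on $X\setminus\Sigma_k$ and also in $L^1(X,\omega^n)$, while the $u_{k,p}$ remain uniformly bounded above.

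Once $p$ is large enough that Proposition \ref{P:FSwedge}(i) ensures general position of $\Sigma_1$ with $\Sigma_{2,p}$ and of $\Sigma_{1,p}$ with $\Sigma_2$, Proposition \ref{P:FSwedge}(ii) guarantees that all the mixed wedge products below are well defined, and I would start from the telescoping identity
\[
\tfrac{1}{p^2}\gamma_{1,p}\wedge\gamma_{2,p}-c_1(L_1,h_1)\wedge c_1(L_2,h_2)
=\bigl(\tfrac{1}{p}\gamma_{1,p}-c_1(L_1,h_1)\bigr)\wedge\tfrac{1}{p}\gamma_{2,p}
+c_1(L_1,h_1)\wedge\bigl(\tfrac{1}{p}\gamma_{2,p}-c_1(L_2,h_2)\bigr).
\]
By \eqref{e:FSB} each factor $\tfrac{1}{p}\gamma_{k,p}-c_1(L_k,h_k)$ splits into a smooth $O(1/p)$ piece plus the residual $\tfrac{1}{2p}\ddc\log P_{k,p}$. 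The smooth piece contributes a weakly vanishing remainder by the cohomological mass bound on $\tfrac{1}{p}\gamma_{\ell,p}$, so the task reduces to showing that $\tfrac{1}{p}\ddc\log P_{k,p}\wedge S_p$ tends to zero in the weak sense of currents, where $S_p$ is either $\tfrac{1}{p}\gamma_{\ell,p}$ or $c_1(L_\ell,h_\ell)$ with $\ell\neq k$.

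The main obstacle is precisely this last weak convergence, because $\ddc\log P_{k,p}$ is only a current and $S_p$ itself is singular along an analytic set meeting $\Sigma_k$ generically. I would test against a $\Cc^2$ form $\varphi$ of bidegree $(n-2,n-2)$, integrate by parts (legitimate because $P_{k,p}$ is a globally defined function on $X$ whose logarithm is uniformly bounded above by \eqref{e:Bke}), and control
\[
\tfrac{1}{p}\bigl\langle\log P_{k,p},\,S_p\wedge \ddc\varphi\bigr\rangle
\]
using (a) the $L^1(X,\omega^n)$ convergence $\tfrac{1}{p}\log P_{k,p}\to 0$, (b) the uniform upper bound \eqref{e:Bke}, and (c) a Chern--Levine--Nirenberg type estimate that controls the mass of $S_p$ on small neighborhoods of $\Sigma_k$, exploiting general position of $\Sigma_k$ with the base locus of $S_p$ (Proposition \ref{P:FSwedge}(i)). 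This adapts the techniques from \cite[\S4]{CMN15} to the adjoint setting at hand.
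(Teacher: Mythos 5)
Your setup (the telescoping identity, the splitting via \eqref{e:FSB}, and the reduction to showing $\tfrac1p\,\ddc\log P_{k,p}\wedge S_p\to 0$ weakly) is a reasonable start, and it parallels the paper's use of \eqref{e:FSB} for the mass comparison. But the final estimate is where the argument breaks, and it is precisely the hard point of the theorem. After integrating by parts you must control $\tfrac1p\langle\log P_{k,p},\,S_p\wedge\ddc\varphi\rangle$, and none of your tools (a)--(c) does this near $\Sigma_k\cap\Sigma_\ell$: the $L^1(X,\omega^n)$ convergence of $\tfrac1p\log P_{k,p}$ says nothing about integrals against the trace measure of $S_p$, which is singular with respect to $\omega^n$ and whose mass on a fixed small neighborhood of $\Sigma_k\cap\Sigma_\ell$ need not tend to zero (e.g.\ $c_1(L_\ell,h_\ell)$ can have positive Lelong numbers along $\Sigma_\ell$); the bound \eqref{e:Bke} controls only the positive part of $\log P_{k,p}$, while the dangerous contribution is the negative part, which is $-\infty$ on the base locus $\Sigma_{k,p}$ clustering at $\Sigma_k$; and a Chern--Levine--Nirenberg estimate bounds masses of $S_p$ near $\Sigma_k$ but gives no uniform integrability of $\tfrac1p\log P_{k,p}$ with respect to those varying measures. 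In effect you are claiming a direct two-sided estimate on the possible concentration of $\tfrac1{p^2}\gamma_{1,p}\wedge\gamma_{2,p}$ on the codimension-two set $\Sigma_1\cap\Sigma_2$, and that is exactly what is not available by these means.

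The paper circumvents this as follows: it first notes, via \eqref{e:FSB}, that the total masses converge, so it suffices to show any weak limit $T$ satisfies $T\geq c_1(L_1,h_1)\wedge c_1(L_2,h_2)$; it proves weak convergence on $X\setminus(\Sigma_1\cap\Sigma_2)$ using the locally uniform convergence $u_{k,p}\to u_k$ off $\Sigma_k$ (Theorem \ref{T:Bka}, Part 1) and a dedicated local lemma (Lemma \ref{L:FSwedge}, which itself needs a careful argument, via Sobolev convergence, to pass from $v_p\,\ddc u_p\to v\,\ddc u$ to $u_p\,\ddc v_p\to u\,\ddc v$ off $B$); then Federer's support theorem kills any discrepancy on the part of $\Sigma_1\cap\Sigma_2$ of dimension $\leq n-3$, Siu's decomposition isolates the coefficients $c_j,d_j$ along the $(n-2)$-dimensional components $Y_j$, and a slicing argument combined with Fatou's lemma yields only the one-sided inequality $c_j\geq d_j$; equality of masses then forces $T=c_1(L_1,h_1)\wedge c_1(L_2,h_2)$. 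The fact that the paper can only obtain a one-sided bound near $\Sigma_1\cap\Sigma_2$ and must close the argument cohomologically is a strong indication that the direct integration-by-parts estimate you propose cannot be pushed through without a genuinely new ingredient; as it stands, your proof has a gap at its central step.
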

\par Taking  for granted  the above result, we  arrive at the
\begin{proof}[Proof of Theorem \ref{T:main3}] Theorem \ref{T:main3} follows directly from 
Theorem \ref{T:speed} and  Theorem \ref{T:FSwedge}.
\end{proof}

\par The remainder of the section is  devoted  to the proof of  Theorem 
\ref{T:FSwedge}.
Let us start with the following lemma.

\begin{lemma}\label{L:FSwedge} Let $U$ be an open set in ${\mathbb C}^n$, $A,B$ be proper analytic subvarieties of $U$ with $\codim A\cap B\geq2$, and $u,v,u_p,v_p$, $p\geq1$, be psh functions on $U$ such that:

\par (i) $u$ is continuous on $U\setminus A$ and $u_p\to u$ as $p\to\infty$ locally uniformly on $U\setminus A$.

\par (ii) $v$ is continuous on $U\setminus B$ and $v_p\to v$ as $p\to\infty$ locally uniformly on $U\setminus B$.

\par (iii) the currents $dd^cu_p\wedge dd^cv_p=dd^c(u_pdd^cv_p)=dd^c(v_pdd^cu_p)$ are well defined. 

\noindent Then $dd^cu_p\wedge dd^cv_p\to dd^cu\wedge dd^cv$ in the weak sense of currents on $U\setminus(A\cap B)$. Moreover, if $n=2$ then $dd^cu_p\wedge dd^cv_p\to dd^cu\wedge dd^cv$ as measures on $U$.
\end{lemma}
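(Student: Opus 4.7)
The strategy rests on the Bedford--Taylor-style identity $dd^c u_p\wedge dd^c v_p=dd^c(u_p\,dd^c v_p)=dd^c(v_p\,dd^c u_p)$, granted by hypothesis (iii), which reduces the problem to passing to the limit in an ``inner'' expression of the form (continuous function)$\times$(weakly convergent positive closed current). A preliminary remark: since $u_p,v_p$ are psh and converge locally uniformly to psh limits off the proper subvarieties $A$, $B$, the standard $L^1_{\loc}$-compactness of psh families yields $u_p\to u$, $v_p\to v$ in $L^1_{\loc}(U)$; in particular $dd^c u_p\to dd^c u$ and $dd^c v_p\to dd^c v$ weakly, with locally uniformly bounded masses.

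For the first part, let $\phi$ be a smooth test form compactly supported in $U\setminus(A\cap B)$. A partition of unity subordinate to the open cover $\{U\setminus A,\,U\setminus B\}$ of $U\setminus(A\cap B)$ decomposes $\phi=\phi_1+\phi_2$ with $\supp\phi_1\subset U\setminus A$ and $\supp\phi_2\subset U\setminus B$; by symmetry it suffices to treat $\phi_1$. By (iii),
\[
\langle dd^c u_p\wedge dd^c v_p,\phi_1\rangle=\int u_p\,dd^c v_p\wedge dd^c\phi_1.
\]
Split $u_p=u+(u_p-u)$. The error contribution $\int(u_p-u)\,dd^c v_p\wedge dd^c\phi_1$ is bounded above by a constant times $\|u_p-u\|_{L^\infty(\supp\phi_1)}\,\|\phi_1\|_{\Cc^2}\,\|dd^c v_p\|_{\supp\phi_1}$, which tends to $0$ thanks to the uniform convergence of $u_p$ to the continuous function $u$ near $\supp\phi_1$ together with the uniform mass bound. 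The main contribution $\int u\,dd^c v_p\wedge dd^c\phi_1$, in which $u$ is continuous on a neighborhood of $\supp\phi_1$, converges to $\int u\,dd^c v\wedge dd^c\phi_1=\langle dd^c u\wedge dd^c v,\phi_1\rangle$ by uniformly approximating $u$ by smooth functions on $\supp\phi_1$ and invoking the weak convergence $dd^c v_p\to dd^c v$.

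For the second part, assume $n=2$, so that $A\cap B$ is a discrete subset of $U$ and each $dd^c u_p\wedge dd^c v_p$ is a positive measure. For $\psi\in\Cc^\infty$ compactly supported in $U$, integration by parts gives
\[
\int\psi\,dd^c u_p\wedge dd^c v_p=\int u_p\,dd^c v_p\wedge dd^c\psi,
\]
and an analogous computation shows these measures have uniformly bounded mass on compacts. By weak compactness, every subsequence admits a weakly convergent sub-subsequence with limit measure $\mu$ on $U$, which by the first part coincides with $dd^c u\wedge dd^c v$ on $U\setminus(A\cap B)$. It remains to pass to the limit in the identity above: the $L^1_{\loc}$ convergence $u_p\to u$, together with the weak convergence $dd^c v_p\to dd^c v$ and the smoothness of $dd^c\psi$, implies $\int u_p\,dd^c v_p\wedge dd^c\psi\to\int u\,dd^c v\wedge dd^c\psi=\int\psi\,dd^c u\wedge dd^c v$, which pins down $\mu$ uniquely as $dd^c u\wedge dd^c v$ on all of $U$.

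The main obstacle is exactly this last passage to the limit: because $u$ and $u_p$ may tend to $-\infty$ along $A\supset A\cap B$, the product $u_p\,dd^c v_p$ must be controlled near $A\cap B$ to rule out any concentration of mass at the isolated points there. The codimension hypothesis $\codim(A\cap B)\geq 2$, combined (for the second part) with $n=2$, makes $A\cap B$ a discrete pluripolar set; moreover, since $v$ is continuous off $B$, a Chern--Levine--Nirenberg-type estimate controls $\|dd^c v_p\|$ on shrinking neighborhoods of points of $A\cap B\subset B$ uniformly in $p$, which absorbs the contribution of the $\{u=-\infty\}$ portion and prevents any mass from escaping into the exceptional set.
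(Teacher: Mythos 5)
The first half of your argument (convergence on $U\setminus(A\cap B)$) is correct and is essentially the paper's: on $U\setminus A$ you write $\langle dd^cu_p\wedge dd^cv_p,\phi\rangle=\int u_p\,dd^cv_p\wedge dd^c\phi$, split $u_p=u+(u_p-u)$, and use the local uniform convergence together with the locally uniform mass bound on $dd^cv_p$; the symmetric argument with $v_p\,dd^cu_p$ handles test forms supported in $U\setminus B$, and a partition of unity glues the two. The problem is in the case $n=2$. Your key step is the claim that, for an arbitrary cut-off $\psi$ with support meeting $A\cap B$,
\[
\int u_p\,dd^cv_p\wedge dd^c\psi\;\longrightarrow\;\int u\,dd^cv\wedge dd^c\psi ,
\]
justified by ``$u_p\to u$ in $L^1_{\mathrm{loc}}$, $dd^cv_p\to dd^cv$ weakly, and $dd^c\psi$ is smooth.'' This does not follow: a product of two weakly convergent sequences need not converge, and here $u_p$ converges in $L^1$ only with respect to Lebesgue measure, not with respect to the measures $\|dd^cv_p\|$, which can charge $A$ heavily. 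The same defect already undermines your claim that ``an analogous computation'' gives the uniform local mass bound, since that too requires controlling $\int u_p\,dd^cv_p\wedge dd^c\psi$ near $A\cap B$. Your closing sketch does not repair this: a Chern--Levine--Nirenberg bound does not make $\|dd^cv_p\|$ small on shrinking neighborhoods of a point $x\in A\cap B$ (the Lelong number of $dd^cv$ at $x$ may be positive, so these masses stay bounded away from $0$ uniformly in $p$), while $u_p$ is unbounded along $A$ there; so no uniform-integrability statement of the kind you need is available, and the possible concentration of $u_p\,dd^cv_p$ at the points of $A\cap B$ is exactly what remains unproved.

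The paper circumvents this by never pairing $u_p\,dd^cv_p$ with forms whose support meets $A\cap B$: for each isolated point $x\in A\cap B$ it takes $\chi\equiv1$ near $x$ with $\supp\chi\cap(A\cap B)=\{x\}$, so that $dd^c\chi$ is supported in $U\setminus(A\cap B)$, and uses $\int\chi\,dd^cu_p\wedge dd^cv_p=\int u_p\,dd^cv_p\wedge dd^c\chi$ to get both the local mass bound and the identification of the atom of any limit measure at $x$. The price is that one needs the \emph{potential-level} convergence $u_p\,dd^cv_p\to u\,dd^cv$ on all of $U\setminus(A\cap B)$, in particular near points of $A\setminus B$ where $u_p$ does not converge uniformly; the paper proves this by an integration by parts,
\[
\int u_p\,dd^cv_p\wedge\chi=\int v_p\,dd^cu_p\wedge\chi+\int v_p\left(du_p\wedge d^c\chi-d^cu_p\wedge d\chi+u_p\,dd^c\chi\right),
\]
combined with H\"ormander's compactness ($u_p\to u$ in $L^q_{\mathrm{loc}}$ and in $W^{1,q}_{\mathrm{loc}}$, $q<2$). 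This step, or some substitute controlling $u_p\,dd^cv_p$ away from $A\cap B$ but across $A$, is missing from your proposal; as written, the $n=2$ statement is not established.
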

 
 \begin{proof} We recall that the current $dd^c\rho\wedge T:=dd^c(\rho T)$ is well defined, where $\rho$ is a psh function and $T$ a positive closed current on $U$, if $\rho$ is locally integrable on $U$ with respect to the trace measure of $T$. The current $dd^cu\wedge dd^cv$ is well defined on $U$ since $\codim A\cap B\geq2$ and $u,v$ are locally bounded on $U\setminus A$, resp.\ on $U\setminus B$ \cite[Corollary 2.11]{D93} (see also \cite{FS95}). Since $u_p\to u$ locally uniformly on $U\setminus A$ and $u$ is continuous there, we have by \cite[Theorem 3.4]{CM11} that $u_p\to u$ in $L^1_{loc}(U)$ hence $dd^cu_p\to dd^cu$ weakly on $U$. Similarly,  $dd^cv_p\to dd^cv$ weakly on $U$. Using again the uniform convergence of $u_p$ on $U\setminus A$ and the continuity of $u$ there, it follows that $u_pdd^cv_p\to u\,dd^cv$, hence $dd^cu_p\wedge dd^cv_p\to dd^cu\wedge dd^cv$, weakly on $U\setminus A$ (see e.\,g.\  \cite{BT76,BT82}, \cite[Corollary 1.6]{D93}). Similarly one has that $v_pdd^cu_p\to v\,dd^cu$, hence $dd^cu_p\wedge dd^cv_p\to dd^cu\wedge dd^cv$, weakly on $U\setminus B$. Thus $dd^cu_p\wedge dd^cv_p\to dd^cu\wedge dd^cv$ weakly on $U\setminus(A\cap B)$. 
 
\smallskip

\par We prove now that $u_pdd^cv_p\to u\,dd^cv$ weakly on $U\setminus B$ as well. Indeed, note that by \cite[Theorem 4.1.8]{Ho} we have $u_p\to u$, $v_p\to v$ in $L^p_{loc}(U)$ for any $1\leq p<\infty$, and in the Sobolev space $W^{1,p}_{loc}(U)$ for any $1\leq p<2$. If $\chi$ is a test form supported in $U\setminus B$ then 
\begin{eqnarray*}
\int u_pdd^cv_p\wedge\chi&=&\int v_pdd^c(u_p\chi)\\
&=&\int v_pdd^cu_p\wedge\chi+\int v_p(du_p\wedge d^c\chi-d^cu_p\wedge d\chi+u_pdd^c\chi)\,.
\end{eqnarray*}
Now our claim follows since $v_pdd^cu_p\to v\,dd^cu$ weakly on $U\setminus B$ and since $v_pdu_p\to v\,du$, $v_pd^cu_p\to v\,d^cu$,  $v_pu_p\to vu$ in $L^1_{loc}(U)$. Therefore we have in fact that $u_pdd^cv_p\to u\,dd^cv$ weakly on $U\setminus(A\cap B)$. 

\smallskip

\par We consider finally the case $n=2$, so $A\cap B$ consists of isolated points. 
Let $x\in A\cap B$ and $\chi\geq0$ be a smooth function with compact support 
in $U$ so that $\chi=1$ near $x$ and $\supp\chi\cap(A\cap B)=\{x\}$. 
Since $u_pdd^cv_p\to u\,dd^cv$ weakly on 
$U\setminus(A\cap B)\supset\supp dd^c\chi$ we obtain 
$$\int\chi\,dd^cu_p\wedge dd^cv_p
=\int u_pdd^cv_p\wedge dd^c\chi\to\int u\,dd^cv\wedge dd^c\chi
=\int\chi\,dd^cu\wedge dd^cv\,.$$
Hence the sequence of positive measures $dd^cu_p\wedge dd^cv_p$ 
has locally bounded mass and any weak limit point 
$\mu$ satisfies $\mu(\{x\})=dd^cu\wedge dd^cv(\{x\})$ for $x\in A\cap B$. 
It follows that  $dd^cu_p\wedge dd^cv_p\to dd^cu\wedge dd^cv$ as measures on $U$.
\end{proof}

\begin{proof}[Proof of Theorem \ref{T:FSwedge}]  
Recall that the currents $\gamma_{1,p}\wedge\gamma_{2,p}$ and 
$c_1(L_1,h_1)\wedge c_1(L_2,h_2)$ are well defined by Proposition \ref{P:FSwedge}. 
Formula \eqref{e:FSB} implies that 
$$\frac{1}{p^2}\,\int_X\gamma_{1,p}\wedge\gamma_{2,p}\wedge\omega^{n-2}
=\int_Xc_1(L_1,h_1)\wedge c_1(L_2,h_2)\wedge\omega^{n-2} +O({1\over p})\,.$$
Hence it suffices to show that if $T$ is a limit point of the sequence 
$\left\{\frac{1}{p^2}\,\gamma_{1,p}\wedge\gamma_{2,p}\right\}$ 
then $T=c_1(L_1,h_1)\wedge c_1(L_2,h_2)$. For simplicity, we may assume that 
$\frac{1}{p^2}\,\gamma_{1,p}\wedge\gamma_{2,p}\to T$ as $p\to\infty$. 
Since $T$ and $c_1(L_1,h_1)\wedge c_1(L_2,h_2)$ have the same mass, 
it is enough to prove that $T\geq c_1(L_1,h_1)\wedge c_1(L_2,h_2)$.

\par We fix $x\in X$ and let $U$ be a neighborhood of $x$ such that there exist 
holomorphic frames $e_1$ of $L_1$, $e_2$ of $L_2$, and $e'$ of $K_X$, over $U$. 
Using the notation from 
Section \ref{S:Bka}, we let $u_1,u_2,\rho$ be the weights of $h_1,h_2, h^{K_X}$ on $U$ 
corresponding to these frames, 
and let $u_{k,p}$ be the psh functions defined in \eqref{e:FSpot}. 
Then $\frac{1}{p}\,\gamma_{k,p}=dd^cu_{k,p}$ and $c_1(L_k,h_k)=dd^cu_k$ on $U$. 
Note that $u_k$ is continuous on $U\setminus\Sigma(h_k)$. 
By \eqref{e:FSpot}  and by Part 1) of Theorem \ref{T:Bka}  and  by the smoothness of $h^{K_X}$, we have 
 $$u_{k,p}-u_k =\frac{1}{2p}\,\log P_{k,p}+{\rho\over p}\to 0,$$
 locally uniformly on $U\setminus\Sigma(h_k)$. 
 It follows by Lemma \ref{L:FSwedge} that $T=c_1(L_1,h_1)\wedge c_1(L_2,h_2)$ 
 on $U\setminus\Gamma$, and hence on $X\setminus\Gamma$, 
 where $\Gamma:=\Sigma(h_1)\cap\Sigma(h_2)$. 

\par Next we write $\Gamma=Y\cup\left(\cup_{j\geq1}Y_j\right)$, 
where $Y_j$ are the irreducible components of dimension $n-2$ and 
$\dim Y\leq n-3$. Then by Federer's support theorem (\cite{Fed69}, 
see also \cite[Theorem 1.7]{Har77}), 
$T=c_1(L_1,h_1)\wedge c_1(L_2,h_2)$ on $D=X\setminus\cup_{j\geq1}Y_j$, 
since $Y$ is an analytic subset of $D$ of dimension $\leq n-3$. 
Siu's decomposition formula (\cite{Siu74}, see also \cite[Theorem 6.19]{D93}) implies that 
\begin{equation}\label{e:Siudec}
T=R+\sum_{j\geq1}c_j[Y_j]\,,\;c_1(L_1,h_1)\wedge c_1(L_2,h_2)=R+\sum_{j\geq1}d_j[Y_j]\,,
\end{equation}
where $[Y_j]$ denotes the current of integration on $Y_j$, 
$c_j,d_j\geq0$, and $R$ is a positive closed current of bidegree $(2,2)$ on $X$ 
which does not charge any $Y_j$. To conclude the proof of Theorem \ref{T:FSwedge} 
we show that $c_j\geq d_j$ for each $j$, by using slicing as in the proof of \cite[Theorem 3.4]{CM11}.

\par Without loss of generality, let $j=1$ and $x\in Y_1$ be a regular 
point of $\Gamma$ with a neighborhood $U$ as above. 
By a change of coordinates $z=(z',z'')$ near $x$ we may assume that $x=0\in\overline\Delta^n\subset U$
and $\Gamma\cap\Delta^n=Y_1\cap\Delta^n=\{z'=0\}$, 
where $\Delta$ is the unit disk in $\mathbb C$, $z'=(z_1,z_2)$,  
$z''=(z_3,\dots,z_n)$. Let $\chi_1(z')\geq0$ (resp.\  $\chi_2(z'')\geq0$) 
be a smooth function with compact support in $\Delta^2$ 
(resp.\ in $\Delta^{n-2}$) so that $\chi_1=1$ near $0\in{\mathbb C}^2$ 
(resp.\  $\chi_2=1$ near $0\in{\mathbb C}^{n-2}$), 
and let $\beta=i/2\sum_{j=3}^ndz_j\wedge d\overline z_j$ 
be the standard K\"ahler form in ${\mathbb C}^{n-2}$. We set 
$$u_{k,p}^{z''}(z')=u_{k,p}(z',z'')\;,\;\;u_k^{z''}(z')=u_k(z',z'')\,.$$ 
Let $\Sigma_{k,p}$ denote the base locus of $H^0_{(2)}(X,L_k^p\otimes K_X)$ 
and set $\Sigma_p=\Sigma_{1,p}\cap\Sigma_{2,p}$. Then $\Sigma_{k,p}\cap U=\{u_{k,p}=-\infty\}$. 
Since $u_{k,p}\to u_k$ locally uniformly on $U\setminus\Sigma(h_k)$ and $u_k$ 
is continuous there, it follows that 
$\Sigma_p\cap\Delta^n\subset\{(z',z'')\in\Delta^n:\,|z'|<1/2\}$ for all $p$ sufficiently large. 
Thus for each $z''\in\Delta^{n-2}$ the analytic set 
$\{z'\in\Delta^2:\,(z',z'')\in\Sigma_p\cap\Delta^n\}$ is compact, hence finite, 
so the measures $dd^cu_{1,p}^{z''}\wedge dd^cu_{2,p}^{z''}$ 
are well defined \cite[Corollary 2.11]{D93}. Moreover, 
$$\mu_p^{z''}:=dd^cu_{1,p}^{z''}\wedge dd^cu_{2,p}^{z''}\to\mu^{z''}
:=dd^cu_1^{z''}\wedge dd^cu_2^{z''}$$ 
weakly as measures on $\Delta^2$ by Lemma \ref{L:FSwedge}. 
One has the slicing formula (see e.\,g.\  \cite[formula (2.1)]{DS06})
$$\int_{\Delta^n}\chi_1(z')\chi_2(z'')dd^cu_{1,p}\wedge dd^cu_{2,p}
\wedge\beta^{n-2}=
\int_{\Delta^{n-2}}\left(\int_{\Delta^2}\chi_1(z')\,d\mu_p^{z''}(z')\right)\chi_2(z'')\beta^{n-2},$$
and similarly for $dd^cu_1\wedge dd^cu_2$.
Since $dd^cu_{1,p}\wedge dd^cu_{2,p}\to T$ it follows from Fatou's lemma that 
\begin{eqnarray*}
\int_{\Delta^n}\chi_1(z')\chi_2(z'')T\wedge\beta^{n-2}&\geq&
\int_{\Delta^{n-2}}\lim_{p\to\infty}\left(\int_{\Delta^2}\chi_1(z')\,d\mu_p^{z''}(z')\right)\chi_2(z'')\beta^{n-2}\\
&=&\int_{\Delta^{n-2}}\left(\int_{\Delta^2}\chi_1(z')\,d\mu^{z''}(z')\right)\chi_2(z'')\beta^{n-2}\\
&=&\int_{\Delta^n}\chi_1(z')\chi_2(z'')dd^cu_1\wedge dd^cu_2\wedge\beta^{n-2}.
\end{eqnarray*}
This implies that $c_1\geq d_1$, since by \eqref{e:Siudec}, $T=R+c_1[z'=0]$ and $dd^cu_1\wedge dd^cu_2=R+d_1[z'=0]$ on $\Delta^n$. 
\end{proof}


\section{Equidistribution  for sections of several  adjoint line bundles}\label{S:E_Adj}

We prove here Theorem \ref{T:main3bis}.
We will need the following local property of the complex Monge-Amp\`ere operator:

\medskip

\begin{proposition}\label{P:locMA} Let $U$ be an open set in ${\mathbb C}^n$, 
$\Sigma$ be a proper analytic subset of $U$, and $u_1,\ldots, u_m$ be psh functions on $U$ 
which are continuous  on $U\setminus\Sigma$. Assume that  $\dim\Sigma\leq n-m$ 
and that $u_{k,p}\,$, where $1\leq k\leq m$ and $p\geq1$, 
are psh functions on $U$ so that $u_{k,p}\to u_k$ locally uniformly on $U\setminus\Sigma$. Then 
the currents $dd^cu_{1,p}\wedge\ldots\wedge dd^cu_{m,p}$ 
are well defined on $U$ for $p$ sufficiently enough, and 
$dd^cu_{1,p}\wedge\ldots\wedge dd^cu_{m,p}\to
dd^cu_1\wedge\ldots \wedge dd^cu_m
 $ weakly as $p\to\infty$ in the sense of currents on $U$.
\end{proposition}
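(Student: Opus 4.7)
The plan is to argue by induction on $m$. The base case $m=1$ reduces to observing that locally uniform convergence $u_{1,p}\to u_1$ on $U\setminus\Sigma$, together with the fact that the $u_{1,p}$ are psh and locally uniformly bounded above, forces $u_{1,p}\to u_1$ in $L^1_\loc(U)$ (a Hartogs-type argument, as invoked in the proof of Lemma \ref{L:FSwedge} via \cite[Theorem 3.4]{CM11}); hence $dd^c u_{1,p}\to dd^c u_1$ weakly.

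For the inductive step, write $T_p := dd^c u_{2,p}\wedge\cdots\wedge dd^c u_{m,p}$ and $T := dd^c u_2\wedge\cdots\wedge dd^c u_m$. A preliminary task is to verify that $T$, $dd^c u_1\wedge T$, $T_p$, and $dd^c u_{1,p}\wedge T_p$ are all well defined. For $T$ and $dd^c u_1\wedge T$ this follows from \cite[Corollary 2.11]{D93} since each $u_k$ is continuous on $U\setminus\Sigma$ and $\dim\Sigma\leq n-m$. For the perturbed counterparts, the same reference applies once one notes that locally uniform convergence on $U\setminus\Sigma$ implies that for every compact $K\Subset U\setminus\Sigma$ the functions $u_{k,p}$ are uniformly bounded on $K$ for all sufficiently large $p$, so each $u_{k,p}$ is locally bounded on $U\setminus\Sigma$. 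The inductive hypothesis applied to $(u_{2,p},\ldots,u_{m,p})$ then yields $T_p\to T$ weakly on $U$.

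Since $T_p$ and $T$ are closed, $dd^c u_{1,p}\wedge T_p = dd^c(u_{1,p}T_p)$ and $dd^c u_1\wedge T = dd^c(u_1 T)$, so to complete the induction it suffices to establish the weak convergence $u_{1,p}T_p\to u_1 T$ and then apply continuity of $dd^c$. For a test form $\chi$ compactly supported in $U\setminus\Sigma$, this convergence follows from the uniform convergence $u_{1,p}\to u_1$ on $\supp\chi$, the continuity of $u_1$ there, and the weak convergence $T_p\to T$, by essentially the same argument used for $u_p\,dd^cv_p\to u\,dd^cv$ on $U\setminus B$ in Lemma \ref{L:FSwedge}.

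The principal obstacle is the control near $\Sigma$. Here the codimension hypothesis $\dim\Sigma\leq n-m$ is critical: the positive closed current $T$ has bidimension $(n-m+1,n-m+1)$, so by Federer's support theorem (applied as in the proof of Proposition \ref{P:FSwedge}) its trace measure puts no mass on $\Sigma$. Given $\varepsilon>0$, I would choose a shrinking neighborhood $W$ of $\Sigma$ whose $T$-trace mass is less than $\varepsilon$, and use the weak convergence $T_p\to T$ together with a cutoff argument to transfer this smallness to the trace mass of $T_p$ on a slightly larger neighborhood for all $p$ large. Combined with the locally uniform upper bound $u_{1,p}\leq M$ available on compact subsets of $U$ (from Hartogs' lemma applied to the psh sequence), one estimates $|\langle u_{1,p}T_p-u_1 T,\chi\rangle|$ on $W$ by a constant times $\varepsilon$, while the previous paragraph treats $U\setminus W$. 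I expect the delicate point to be securing a uniform-in-$p$ bound for the negative part of $u_{1,p}$ integrated against $T_p$ in a shrinking neighborhood of $\Sigma$ — precisely the kind of estimate obtained via slicing in the proof of Theorem \ref{T:FSwedge}.
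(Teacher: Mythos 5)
Your overall architecture (induction on $m$, convergence away from $\Sigma$ via Bedford--Taylor, then a separate treatment near $\Sigma$) is reasonable and is in the spirit of the argument the paper actually invokes: the paper's entire proof is a reference to \cite[Theorem 3.4]{CM11}, whose proof supplies exactly the local estimates you are missing. But as written your proposal has a genuine gap at the decisive step, and you say so yourself: you never establish the uniform-in-$p$ control of $u_{1,p}$ against $T_p:=dd^cu_{2,p}\wedge\ldots\wedge dd^cu_{m,p}$ near $\Sigma$, you only ``expect'' it. The mechanism you propose for the neighborhood $W$ of $\Sigma$ --- smallness of the trace mass of $T_p$ on $W$ (transferred from $T$ by weak convergence and the support theorem) combined with the Hartogs upper bound $u_{1,p}\leq M$ --- controls only the positive part of $u_{1,p}$ against $\|T_p\|$. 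The danger is precisely the negative part: $u_{1,p}$ is unbounded below near $\Sigma$, and a sequence with deep wells on a set where $T_p$ carries small but nonzero mass can contribute mass of order one to $dd^cu_{1,p}\wedge T_p$ in the limit. A Chern--Levine--Nirenberg bound does not rescue this, since it estimates $\int_K(-u_{1,p})\,d\|T_p\|$ by $\sup|u_{1,p}|$ on a larger compact set, which blows up as that set approaches $\Sigma$.

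Concretely, what your argument proves is only that every limit point $S$ of $dd^cu_{1,p}\wedge T_p$ agrees with $dd^cu_1\wedge\ldots\wedge dd^cu_m$ on $U\setminus\Sigma$; by the support theorem the difference is then a current carried by the $(n-m)$-dimensional components $Y_j$ of $\Sigma$, of the form $\sum_j c_j[Y_j]$, and nothing you have said excludes $c_j\neq 0$. Note that in the compact situation of Theorem \ref{T:FSwedge} this is exactly the issue: there one only gets an inequality between the Lelong-type coefficients by slicing and Fatou, and equality is forced by the global cohomological mass identity. In the purely local setting of the present proposition no such mass identity is available, so the slicing trick you point to cannot be transplanted as is; excluding the extra (or missing) mass on $\Sigma$ requires the genuinely local argument of \cite[Theorem 3.4]{CM11}, which is the content the paper delegates to that reference and which your proposal leaves unproven.
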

\begin{proof}
It follows along the  same lines as those given in the proof of \cite[Theorem 3.4]{CM11}.
\end{proof}

\begin{proof}[Proof of Theorem \ref{T:main3bis}]
 Let $U\subset X$ be a contractible Stein open set, $u_{k,p}$, $u_k$ 
be the psh functions defined in \eqref{e:FSpot}, so $dd^cu_k=c_1(L_k,h_k)$ and 
$dd^cu_{k,p}=\frac{1}{p}\,\gamma_{k,p}$ on $U$. By    Part 1) of Theorem \ref{T:Bka}
we have that $\frac{1}{p}\,\log P_{k,p}\to 0$  locally uniformly on $U\setminus\Sigma,$
hence by \eqref{e:FSpot}, $u_{k,p}\to u_k$ locally uniformly on $U\setminus\Sigma$  as $p\to\infty$, 
for each $1\leq k\leq m$. Therefore, Proposition \ref{P:locMA} implies that
$dd^cu_{1,p}\wedge\ldots\wedge dd^cu_{m,p}\to dd^cu_1\wedge\ldots\wedge dd^cu_m$ 
weakly on $U$ as $p\to\infty$.  Thus,
we have shown that 
$$\frac{1}{p^m}\,\gamma_{1,p}\wedge\ldots\wedge\gamma_{m,p}
\to c_1(L_1,h_1)\wedge\ldots\wedge c_1(L_m,h_m)$$ as $p\to\infty$, 
in the weak sense of currents on $X$.
This,  combined  with Theorem \ref{T:speed}, implies Theorem \ref{T:main3bis}.
\end{proof}

%



\begin{thebibliography}{XXXXX}

\bibitem[A]{Al81}
H.\ Alexander,  \emph{Projective capacity}, Recent developments in several complex variables 
(Proc.\ Conf., Princeton Univ., Princeton, N.\  J., 1979), pp.\  3--27, Ann.\  of Math.\  Stud., 100, 
Princeton Univ.\  Press, Princeton, N.\ J., 1981. 

\bibitem[BT1]{BT76} E.\ Bedford and B.\ A.\ Taylor, {\em The Dirichlet problem for a complex Monge-Amp\`ere equation}, Invent.\ Math.\ {\bf 37} (1976), 1--44.

\bibitem[BT2]{BT82} E.\ Bedford and B.\ A.\ Taylor, {\em A new capacity for plurisubharmonic functions}, Acta Math.\ {\bf 149} (1982), 1--40.

\bibitem[CM1]{CM11} D.\ Coman and G.\ Marinescu, 
{\em Equidistribution results for singular metrics on line bundles}, 
Ann.\ Sci.\ \'Ec.\ Norm.\ Sup\'er.\ (4) \textbf{48} (2015), 497--536.

\bibitem[CM2]{CM13} D.\ Coman and G.\ Marinescu,
{\em Convergence of Fubini-Study currents for orbifold line bundles}, 
Internat.\ J.\ Math.\ {\bf 24} (2013), 1350051, 27 pp.

\bibitem[CM3]{CM13b} D.\ Coman and G.\ Marinescu,
{\em On the approximation of positive closed currents 
on compact K\"ahler manifolds}, 
Math.\ Rep.\ (Bucur.) {\bf 15 (65)}, No.\ 4, 373--386.

\bibitem[CMM]{CMM14} D.\ Coman and X.\ Ma and G.\ Marinescu,
{\em Equidistribution for sequences of line bundles on normal 
K\"ahler spaces}, Geometry and Topology, to appear (arXiv:1412.8184). 

\bibitem[CMN]{CMN15} D.\ Coman and  G.\ Marinescu and V.-A.\ Nguy\^en,
{\em H\"older singular metrics on big line bundles and equidistribution},  Int. Math. Res. Notices {\bf 2016}, no. 16, 5048--5075.

\bibitem[D1]{D82} J.\ P.\ Demailly, {\em Estimations $L^2$ pour l'op\'erateur 
$\overline\partial$ d'un fibr\'e holomorphe semipositif au--dessus d'une vari\'et\'e k\"ahl\'erienne compl\`ete}, 
Ann.\ Sci.\ \'Ecole Norm.\ Sup.\ {\bf 15} (1982), 457--511.


\bibitem[D2]{D90} J.-P.\ Demailly, {\em Singular Hermitian metrics on positive line bundles}, 
in {\em  Complex algebraic varieties (Bayreuth, 1990)}, Lecture Notes in Math.\ 1507, 
Springer, Berlin, 1992, 87--104.

\bibitem[D3]{D92} J.-P.\ Demailly, {\em Regularization of closed positive currents and intersection theory},
 J.\ Algebraic Geom.\ {\bf 1} (1992), 361--409.

\bibitem[D4]{D93} J.-P.\ Demailly, {\em Monge-Amp\`ere operators, Lelong 
numbers and intersection theory}, in {\em Complex analysis and geometry}, 
Plenum, New York, 1993, 115--193.

\bibitem[D5]{D93b} J.-P.\ Demailly, {\em A numerical criterion for very ample line bundles}, 
J.\ Differential Geom.\ {\bf 37} (1993), 323--374.

\bibitem[DMM]{DMM14}  T.-C.\ Dinh and X. Ma and G. Marinescu, {\em Equidistribution and  convergence  
speed for zeros of holomorphic sections of singular Hermitian line bundles}, 
J.\ Funct.\ Anal.\ \textbf{271} (2016), no.\ 11, 3082--3110.

\bibitem[DMN]{DMN15}  T.-C.\ Dinh and X. Ma and  V.-A.\ Nguyen, {\em Equidistribution   
speed for Fekete points  associated with an  ample line bundle}, 
preprint 2015, arXiv:1505.08050, 33 pages,   Ann.\ Sci.\ \'Ec.\ Norm.\ Sup\'er.\ (to appear).

\bibitem[DMS]{DMS} T.-C.\ Dinh and G.\ Marinescu and V.\ Schmidt, {\em Asymptotic distribution 
of zeros of holomorphic sections in the non-compact setting}, J.\ Stat.\ Phys.\ {\bf 148} (2012), no.\ 1, 113--136.

\bibitem[DS]{DS06} T.-C.\ Dinh and N. Sibony, {\em Distribution des valeurs de transformations 
m\'eromorphes et applications}, Comment.\ Math.\ Helv.\ {\bf 81} (2006), no.\ 1, 221--258.

 \bibitem[DS2]{DS09}  T.-C.\ Dinh and N. Sibony, 
  {\em Super-potentials of positive closed currents, intersection theory and dynamics,}  Acta Math. {\bf 203} (2009), no. 1, 1–-82.
  
 \bibitem[F]{Fed69} H.\ Federer, {\em Geometric Measure Theory}, Die Grundlehren der mathematischen Wissenschaften, Band 153, Springer-Verlag New York Inc., New York, 1969, xiv+676 pp.
 

\bibitem[FS]{FS95} J.-E.\ Forn\ae ss and N. Sibony, {\em Oka's inequality for currents 
and  applications}, Math. Ann. {\bf 301} (1995), 399--419.

 
\bibitem[Ha]{Har77} R. Harvey, {\em Holomorphic chains and their boundaries}, Several Complex Variables (Proc. Sympos. Pure Math., Vol. XXX, Part 1), 309--382, Amer. Math. Soc., 1977.
  

\bibitem[Ho]{Ho} L. H\"ormander, {\em Notions of Convexity}, Reprint of the 1994 edition. Modern Birkh\"auser Classics. Birkh\"auser Boston, Inc., Boston, MA, 2007. viii+414 pp. 


\bibitem[GZ]{GZ05}
V.\ Guedj and A.\ Zeriahi, {\em Intrinsic capacities on compact K\"ahler manifolds}, 
J.\ Geom.\ Anal.\ {\bf 15} (2005), 607--639.

 

\bibitem[MM]{MM07} X. Ma and G. Marinescu, {\em Holomorphic Morse Inequalities and Bergman Kernels},
Progress in Mathematics, 254. Birkh\"auser Verlag, Basel, 2007. xiv+422 pp.

 \bibitem[N]{Nad89}
A. M. Nadel, \emph{Multiplier ideal sheaves and K\"ahler-Einstein metrics of positive scalar
curvature}, Annals of Math. \textbf{132} (1990), 549--596.
 
\bibitem[Sh1]{Sh15a}  G. Shao, {\em  Equidistribution of Zeros of Random 
Holomorphic Sections for Moderate Measures},  Math. Z. \textbf{283} (2016),  791--806.

 
\bibitem[Sh2]{Sh15b}  G. Shao, 
{\em Equidistribution on Big Line Bundles with Singular Metrics for Moderate Measures},  
preprint 2015, arXiv:1510.09121, 29 pages,  J. Geom. Anal.  (to appear).



\bibitem[Si]{Siu74} Y. T. Siu, {\em Analyticity of sets associated to Lelong numbers and the extension of closed positive currents}, Invent. Math. {\bf 27} (1974), 53--156. 

\end{thebibliography}
\end{document}